\newtheorem{theorem}{Theorem}[section]
\newtheorem{corollary}[theorem]{Corollary}
\newtheorem{lemma}[theorem]{Lemma}
\newtheorem{proposition}[theorem]{Proposition}
\newtheorem{definition}[theorem]{Definition}
\newtheorem{example}{Example}[section]
\newcommand{\ii}{\mbox{\bf i}}
\newcommand{\re}{\textnormal{Re}\,}
\newcommand{\im}{\textnormal{Im}\,}
\newcommand{\C}{\mathbb{C}}
\newcommand{\R}{\mathbb{R}}
\newcommand{\A}{\mathcal{A}}
\newcommand{\F}{\mathcal{F}}
\newcommand{\CL}{\mathcal{L}}
\newcommand{\G}{\mathcal{G}}
\newcommand{\HI}{\mathcal{H}}
\newcommand{\bp}{\boldsymbol{p}}
\newcommand{\bs}{\boldsymbol{s}}
\newcommand{\bx}{\boldsymbol{x}}
\newcommand{\by}{\boldsymbol{y}}
\newcommand{\bz}{\boldsymbol{z}}
\newcommand{\bw}{\boldsymbol{w}}
\newcommand{\ov}{\overline}
\newcommand{\HH}{\textnormal{H}}
\newcommand{\T}{\textnormal{T}}
\newcommand{\ex}{\textnormal{E}\,}
\begin{document}

\title{Characterizing Real-Valued Multivariate Complex Polynomials and Their Symmetric Tensor Representations}

\author{
Bo JIANG
\thanks{Research Center for Management Science and Data Analytics, School of Information Management and Engineering, Shanghai University of Finance and Economics, Shanghai 200433, China. Email: isyebojiang@gmail.com} \and
Zhening LI
\thanks{Department of Mathematics, University of Portsmouth, Portsmouth, Hampshire PO1 3HF, United Kingdom. Email: zheningli@gmail.com} \and
Shuzhong ZHANG
\thanks{Department of Industrial and Systems Engineering, University of Minnesota, Minneapolis, MN 55455, USA. Email: zhangs@umn.edu}}

\date{\today}

\maketitle

\begin{abstract}
  In this paper we study multivariate polynomial functions in complex variables and their corresponding symmetric tensor representations. The focus is to find conditions under which such complex polynomials always take real values. We introduce the notion of symmetric conjugate forms and general conjugate forms, characterize the conditions for such complex polynomials to be real-valued, and present their corresponding tensor representations. 
  New notions of eigenvalues/eigenvectors for complex tensors are introduced, extending similar properties from the Hermitian matrices. Moreover, 
  we study a property of the symmetric tensors, namely the {largest eigenvalue (in the absolute value sense)} of a symmetric real tensor is equal to its largest singular value; the result is also known as Banach's theorem. We show that a similar result holds for the complex case as well. Finally, we discuss some applications of the new notion of eigenvalues/eigenvectors for the complex tensors.

\vspace{0.5cm}

\noindent {\bf Keywords:} symmetric complex tensor; conjugate complex polynomial; tensor eigenvalue; tensor eigenvector; nonnegative polynomial.

\vspace{0.5cm}

\noindent {\bf Mathematics Subject Classification:} 15A69, 15A18, 15B57, 15B48.

\end{abstract}

\vspace{0.5cm}
%

\section{Introduction}\label{sec:introduction}

In this paper we set out to study the functions in multivariate complex variables which however always take real values. Such functions are frequently encountered in engineering applications arising from signal processing~\cite{ADJZ12}, electrical engineering, and control theory~\cite{TO98}. It is interesting to note that such complex functions are usually not studied by conventional complex analysis, since they are typically not even analytic because the Cauchy-Riemann conditions will never be satisfied unless the function in question is trivial. There has been a surge of research attention to solve optimization models related to such kind of complex functions~\cite{ADJZ12,SBD12,SBD13,JLZ14,JMZ14}. Sorber et al.~\cite{SBD13b} developed a MATLAB toolbox for optimization problems in complex variables, where the complex function in question is either {\it pre-assumed} to be always real-valued~\cite{SBD12}, or it is the modulus/norm of a complex function~\cite{ADJZ12,SBD13}. An interesting question thus arises: {\em Can such real-valued complex functions be characterized?} Indeed there does exist a class of special complex functions that always take real values: the Hermitian quadratic form $\bx^{\HH}A\bx$ where $A$ is a Hermitian matrix. In this case, the quadratic structure plays a key role. This motivates us to search for more general complex polynomial functions with the same property. 
Interestingly, such complex polynomials can be completely characterized, as we will present in this paper.

As is well-known, polynomials can be represented by tensors. The same question can be asked about complex tensors. In fact, there is a considerable amount of recent research attention on the applications of complex tensor optimization. For instance, Hilling and Sudberythe~\cite{HS10} formulated a quantum entanglement problem as a complex multilinear form optimization under the spherical constraint, and Zhang and Qi~\cite{ZQ12} and Ni et al.~\cite{NQB14} discussed quantum eigenvalue problems, which arised from the geometric measure of entanglement of a multipartite symmetric pure state in the complex tensor space. Examples of complex polynomial optimization include Aittomaki and Koivunen~\cite{AK09} who formulated the problem of beam-pattern synthesis in array signal processing as complex quartic polynomial minimization, and Aubry et al.~\cite{ADJZ12} who modeled a radar signal processing problem by complex polynomial optimization. Solution methods for complex polynomial optimization can be found in, e.g., \cite{SBD12,JLZ14,JMZ14}. As mentioned before, polynomials and tensors are known to be related. In particular in the real domain, homogeneous polynomials (or forms) are bijectively related to {\em symmetric} tensors; i.e., the components of the tensor is invariant under the permutation of its indices. This important class of tensors generalizes the concept of symmetric matrices. As the role played by symmetric matrices in matrix theory and quadratic optimization, symmetric tensors have a profound role to play in tensor eigenvalue problems and polynomial optimization. A natural question can be asked about complex tensors: {\em What is the higher order complex tensor generalization of the Hermitian matrix?} In this paper, we manage to identify two classes of symmetric complex tensors, both of which include Hermitian matrices as a special case when the order of the tensor is two.

In recent years, the eigenvalue of tensor has become a topic of intensive research interest. {Perhaps a first attempt to generalize eigenvalue decomposition of matrices can be traced back to 2000 when De Lathauwer et al.~\cite{DDV00} introduced the so-called higher-order eigenvalue decomposition. Shortly after that, Kofidis and Regalia~\cite{KR02} showed that blind deconvolution can be formulated as a nonlinear eigenproblem.} A systematic study of eigenvalues of tensors was pioneered by Lim~\cite{L05} and Qi~\cite{Q05} independently in 2005. Various applications of tensor eigenvalues and the connections to polynomial optimization problems have been proposed;
cf.~\cite{Q07,NQWW07,ZQ12,CS13,NQB14} and the references therein. We refer the interested readers to the survey papers~\cite{Q12} for more details on the spectral theory of tensors and various applications of tensors. Computation of tensor eigenvalues is an important source for polynomial optimization~\cite{HLZ10,LHZ12}. Essentially the problem is to maximize or minimize a homogeneous polynomial under the spherical constraint, which can also be used to test the (semi)-definiteness of a symmetric tensor. 

In this paper we are primarily interested in complex polynomials/tensors that arise in the context of optimization. By nature of optimization, we are interested in the complex polynomials that always take real values. However, it is easy to see that if no {\em conjugate} term is involved, then the only class of real-valued complex polynomials is the set of real constant functions\footnote{This should be differentiated from the notion of real-symmetric complex polynomial, sometimes also called real-valued complex polynomial in abstract algebra, i.e., $\ov{f(\bx)}=f(\ov{\bx})$.}. Therefore, the conjugate terms are necessary for a complex polynomial to be real-valued. Hermitian quadratic forms mentioned earlier belong to this category, which is an active area of research in optimization; see e.g.~\cite{LMSYZ10,ZH06,SZY07}.
In the aforementioned papers~\cite{Q07,NQWW07,CS13} on eigenvalues of complex tensors, the associated complex polynomials however are not real-valued. 
The aim of this paper is different. We target for a systematic study on
the nature of symmetricity for higher order complex tensors which will lead to the property that the associated polynomials always take real values.
The main contribution of this paper is to give a full characterization for the real-valued conjugate complex polynomials and to identify two classes of symmetric complex tensors, which have already shown potentials in the algorithms design~\cite{ADJZ12,JLZ14,JMZ14}. We also proposed two new types of tensor eigenvalues/eigenvectors for the new classes of complex tensors.


This paper is organized as follows. We start with the preparation of various notations and terminologies in Section~\ref{sec:preparation}. In particular, two types of conjugate complex polynomials are defined and their symmetric tensor representations are discussed. Section~\ref{sec:condition} presents the necessary and sufficient condition for real-valued conjugate complex polynomials, based on which two types of symmetric complex tensors are defined, corresponding to the two types of real-valued conjugate complex polynomials.
As an important result in this paper, we then present the definitions and properties of eigenvalues and eigenvectors for two types of symmetric complex tensors in Section~\ref{sec:eigenvalue}. In Section~\ref{sec:Banach}, we discuss Banach's theorem, which states that the {largest eigenvalue (in the absolute value sense)} of a symmetric real tensor is equal to its largest singular value, and extend it to the two new types of symmetric complex tensors. Some application examples are discussed in Section~\ref{sec:application} to show the significance in practice of the theoretical results in this paper. Finally, we conclude this paper by summarizing our main findings and outlining possible future work in Section~\ref{sec:conclusion}.

\section{Preparation}\label{sec:preparation}

Throughout this paper we use usual lowercase letters, boldface lowercase letters, capital letters, and calligraphic letters to denote scalars, vectors, matrices, and tensors, respectively. For example, a scalar $a$, a vector $\bx$, a matrix $Q$, and a tensor $\F$. We use subscripts to denote their components, e.g. $x_i$ being the $i$-th entry of a vector $\bx$, $Q_{ij}$ being the $(i,j)$-th entry of a matrix $Q$ and $\F_{ijk}$ being the $(i,j,k)$-th entry of a third order tensor $\F$. As usual, the field of real numbers and the field of complex numbers are denoted by $\R$ and $\C$, respectively.

For any complex number $z=a+\ii b\in\C$ with $a,b\in\R$, its real part and imaginary part are denoted by $\re z:=a$ and $\im z:=b$, respectively. Its modulus is denoted by $|z|:=\sqrt{\ov{z}z}=\sqrt{a^2+b^2}$, where $\ov{z}:=a-\ii b$ denotes the conjugate of $z$. For any vector $\bx\in\C^n$, {we let $\bx^{\HH}:=\ov{\bx}^{\T}$ be the transpose of its conjugate, and we define it analogously for matrices}. Throughout this paper we uniformly use the 2-norm for vectors, matrices and tensors in general, which is the usual Euclidean norm. For example, the norm of a vector $\bx\in\C^n$ is defined as $\|\bx\|:=\sqrt{\bx^{\HH}\bx}$, and the norm of a $d$-th order tensor $\F\in\C^{n_1 \times \dots\times n_d}$ is defined as
$$
\|\F\|:= \sqrt{\sum_{i_1=1}^{n_1}\dots\sum_{i_d=1}^{n_d}\ov{\F_{i_1 \dots i_d}} \cdot \F_{i_1 \dots i_d}}.
$$

\subsection{Complex forms and their tensor representations}

A multivariate complex polynomial $f(\bx)$ is a polynomial function of variable $\bx\in\C^n$ whose coefficients are complex, e.g.\ $f(x_1,x_2)=x_1+(1-\ii){x_2}^2$. A multivariate {\em conjugate} complex polynomial (sometimes abbreviated by conjugate polynomial in this paper) $f_C(\bx)$ is a polynomial function of variables $\bx,\ov{\bx}\in\C^n$, which is differentiated by the subscript $C$, standing for `conjugate', e.g. $f_C(x_1,x_2) = x_1 + \ov{x_2} + \ov{x_1}x_2+ (1-\ii){x_2}^2$. In particular, a general $n$-dimensional $d$-th degree conjugate complex polynomial can be explicitly written as summation of monomials
$$
f_C(\bx):=\sum_{\ell=0}^d \sum_{k=0}^\ell\,\sum_{1\le i_1\le \dots \le i_k \le n}\,\sum_{1\le j_1 \le \dots \le j_{\ell-k} \le n} a_{i_1\dots i_k,j_1\dots j_{\ell-k}}\ov{x_{i_1}\dots x_{i_k}}x_{j_1} \dots x_{j_{\ell-k}}.
$$
{In the above notation for a monomial $a_{i_1\dots i_k,j_1\dots j_{\ell-k}}\ov{x_{i_1}\dots x_{i_k}}x_{j_1} \dots x_{j_{\ell-k}}$, the indices of the coefficient $a_{i_1\dots i_k,j_1\dots j_{\ell-k}}$ are always partitioned by a `,' to separate that of conjugate variables and that of regular variables. In particular, the coefficient of a monomial that only has conjugate variables such as $\ov{x_{i_1}}\ov{x_{i_2}}$ will be written as $a_{i_1i_2,}$.} In this definition, it is obvious that complex polynomials are a subclass of conjugate complex polynomials. Remark that a pure complex polynomial can never only take real values unless it is a constant. This observation follows trivially from the basic theorem of algebra.

Given a $d$-th order complex tensor $\F\in\C^{n_1 \times \dots\times n_d}$, its associated multilinear form is defined as
$$
\F(\bx^1,\dots,\bx^d):=\sum_{i_1=1}^{n_1}\dots\sum_{i_d=1}^{n_d} \F_{i_1  \dots i_d} x^1_{i_1}\dots x^d_{i_d},
$$
where $\bx^k\in \C^{n_k}$ for $k=1,\dots,d$. A complex tensor $\F\in\C^{n_1\times \dots\times n_d}$ is called {\em symmetric} if $n_1=\dots=n_d \, (=n)$ and every component $\F_{i_1 \dots i_d}$ are invariant under all permutations of the indices $\{i_1,\dots,i_d\}$. We remark that conjugation is not involved here when speaking of symmetricity for complex tensors. Closely related to a symmetric tensor $\F\in\C^{n^d}$ is a general $d$-th degree complex homogeneous polynomial function $f(\bx)$ (or complex form) of variable $\bx\in \C^n$, i.e.,
\begin{equation}\label{eq:symmetric}
f(\bx):=\F(\underbrace{\bx,\dots,\bx}_d)=\sum_{i_1=1}^{n}\dots\sum_{i_d=1}^{n} \F_{i_1 \dots i_d} x_{i_1}\dots x_{i_d}.
\end{equation}
In fact, symmetric tensors (either in the real domain or in the complex domain) are bijectively related to homogeneous polynomials; see~\cite{CGLM08}. In particular, for any $n$-dimensional $d$-th degree complex form
$$
  f(\bx)= \sum_{1\le i_1 \le \dots \le i_d\le n} a_{i_1\dots i_d}x_{i_1}\dots x_{i_d},
$$
there is a uniquely defined $n$-dimensional $d$-th order symmetric complex tensor $\F\in\C^{n^d}$ with
$$
\F_{i_1\dots i_d}=\frac{a_{i_1\dots i_d}}{|\Pi(i_1\dots i_d)|}, \quad\forall\, 1\le i_1\le \dots\le i_d\le n,
$$
satisfying~\eqref{eq:symmetric}, where $\Pi(i_1\dots i_d)$ is the set of all distinct permutations of the indices $\{i_1,\dots, i_d\}$. On the other hand, in light of formula~\eqref{eq:symmetric}, a complex form $f(\bx)$ is easily obtained from the symmetric multilinear form $\F(\bx^1,\dots,\bx^d)$ by letting $\bx^1=\dots=\bx^d=\bx$.

\subsection{Symmetric conjugate forms and their tensor representations}\label{sec:cform}

To discuss higher order conjugate complex forms and complex tensors, let us start with the {well-established} properties of Hermitian matrices. Let $A \in \C^{n^2}$ with $A^{\HH}=A$, which is not symmetric in the usual sense because $A^{\T}\neq A$ in general. The following conjugate quadratic form
$$\bx^{\HH}A\bx=\sum_{i=1}^{n}\sum_{j=1}^{n}A_{ij}\ov{x_i}x_j$$
always takes real values for any $\bx\in\C^n$. In particular, we notice that each monomial in the above form is the product of one `conjugate' variable $\ov{x_i}$ and one usual (non-conjugate) variable $x_j$.

To extend the above form to higher degrees, let us consider the following special class of conjugate polynomials:
\begin{definition} \label{def:sform}
A symmetric conjugate form of the variable $\bx\in\C^n$ is defined as
\begin{equation}\label{eq:sform}
f_S(\bx):=\sum_{1\le i_1 \le  \dots \le i_d \le n}\, \sum_{1\le j_1 \le \dots \le j_d \le n} a_{i_1\dots i_d,j_1\dots j_d}\ov{x_{i_1}\dots x_{i_d}}x_{j_1} \dots x_{j_d}.
\end{equation}
\end{definition}

Essentially, $f_S(\bx)$ is the summation of all the possible $2d$-th degree monomials that consist of $d$ conjugate variables and $d$ usual variables. Here the subscript `$S$' stands for `symmetric'. The following example is a special case of~\eqref{eq:sform}.

\begin{example}
Given a $d$-th degree complex form $h(\bx) = \sum_{1\le i_1 \le  \dots \le i_d \le n } c_{i_1\dots i_d} x_{i_1}\dots x_{i_d}$, the function
\begin{align*}
|h(\bx)|^2&=\left(\sum_{1\le i_1 \le  \dots \le i_d \le n } \ov{c_{i_1\dots i_d} x_{i_1}\dots x_{i_d}}\right) \left(\sum_{1\le j_1 \le  \dots \le j_d \le n } c_{j_1\dots j_d} x_{j_1}\dots x_{j_d}\right)\\
&= \sum_{1\le i_1 \le  \dots \le i_d \le n} \, \sum_{1\le j_1 \le  \dots \le j_d \le n } \left(\ov{c_{i_1\dots i_d}}\cdot c_{j_1\dots j_d}\right)\ov{ x_{i_1}\dots x_{i_d}}  x_{j_1}\dots x_{j_d}
\end{align*}
is a $2d$-th degree symmetric conjugate form.
\end{example}

Notice that $|h(\bx)|^2$ is actually a real-valued conjugate polynomial. Later in Section~\ref{sec:condition} we shall show that a symmetric conjugate form $f_S(\bx)$ in~\eqref{eq:sform} always takes real values if and only if the coefficients of any pair of conjugate monomials $\ov{ x_{i_1}\dots x_{i_d}} x_{j_1}\dots x_{j_d}$ and $\ov{x_{j_1}\dots x_{j_d}}x_{i_1}\dots x_{i_d}$ are conjugate to each other, i.e.,
$$
a_{i_1\dots i_d,j_1\dots j_d}=\ov{a_{j_1\dots j_d,i_1\dots i_d}}, \quad\forall \,1\le i_1\le \dots \le i_d \le n,\, 1\le j_1 \le \dots \le j_d \le n.
$$

As any complex form uniquely defines a symmetric complex tensor and vice versa, we observe a class of tensors representable for symmetric conjugate forms.
\begin{definition} \label{def:partial-symmetric}
 An even order tensor $\F\in\C^{n^{2d}}$ is called partial-symmetric if for {every} $1\le i_1\le\dots\le i_{d}\le n,\, 1\le i_{d+1}\le\dots\le i_{2d}\le n$
\begin{equation}\label{eq:partial-symmetric}
\F_{j_1\dots j_d j_{d+1}\dots j_{2d}} = \F_{i_1\dots i_d i_{d+1}\dots i_{2d}}, \quad\forall \,
(j_1\dots j_d) \in \Pi (i_1\dots i_d),\, (j_{d+1}\dots j_{2d})\in\Pi(i_{d+1}\dots i_{2d}).
\end{equation}
\end{definition}

We remark that the so-called partial-symmetricity was studied earlier in algebraic geometry by Carlini and Chipalkatti~\cite{CC03}, and was also studied in polynomial optimization~\cite{HLZ13} in the framework of mixed polynomial forms, i.e., for any fixed first $d$ indices of the tensor, it is symmetric with respect to its last $d$ indices, and vise versa. It is clear that partial-symmetricity~\eqref{eq:partial-symmetric} is weaker than the usual symmetricity for tensors.

Let us formally define the bijection $\mathbf{S}$ (taking the first initial of symmetric conjugate forms) between symmetric conjugate forms and partial-symmetric complex tensors, as follows:\\
(i) $\mathbf{S}(\F)=f_S$: Given a partial-symmetric tensor $\F\in\C^{n^{2d}}$ with its associated multilinear form $\F(\bx^1,\dots,\bx^{2d})$, the symmetric conjugate form is defined as
      $$
        f_S(\bx)=\F(\underbrace{\ov{\bx},\dots,\ov{\bx}}_d,\underbrace{\bx,\dots,\bx}_d)
        = \sum_{i_1=1}^n\dots\sum_{i_{2d}=1}^n \F_{i_1\dots i_d i_{d+1}\dots i_{2d}}\ov{x_{i_1}\dots x_{i_d}}x_{i_{d+1}} \dots x_{i_{2d}}.
      $$
(ii) $\mathbf{S}^{-1}(f_S)=\F$: Given a symmetric conjugate form $f_S$~\eqref{eq:sform}, the components of the partial-symmetric tensor $\F\in\C^{n^{2d}}$ are defined by
      \begin{equation}
        \F_{j_1\dots j_d j_{d+1}\dots j_{2d}}=\frac{a_{i_1\dots i_d ,i_{d+1}\dots i_{2d}}} {|{\Pi}(i_1\dots i_d)| \cdot |{\Pi}(i_{d+1}\dots i_{2d})|}\label{eq:sinverse}
      \end{equation}
      for all $1\le i_1\le\dots\le i_{d}\le n,\,
        1\le i_{d+1}\le\dots\le i_{2d}\le n,\,
        (j_1\dots j_d) \in \Pi (i_1\dots i_d)$ and $(j_{d+1}\dots j_{2d})\in\Pi(i_{d+1}\dots i_{2d})$.
\begin{example}
  Given a bivariate fourth degree symmetric conjugate form $f_S(\bx)=(1-\ii){\ov{x_1}}^2{x_1}^2 +4 \ov{x_1}\ov{x_2}x_1x_2+ 6 \ov{x_1}\ov{x_2}{x_2}^2 $, the corresponding partial-symmetric tensor $\F=\mathbf{S}^{-1}(f_S)\in\C^{2^4}$ satisfies that $\F_{1111}=1-\ii,\,\F_{1212}=\F_{1221}=\F_{2112}=\F_{2121}=1,\,\F_{1222}=\F_{2122}=3$ and other entries are zeros. Conversely, $f_S(\bx)$ can be obtained from $\F\big(\binom{\,\ov{x_1}\,}{\ov{x_2}}, \binom{\,\ov{x_1}\,}{\ov{x_2}}, \binom{x_1}{x_2}, \binom{x_1}{x_2}\big)$.
\end{example}

According to the mappings defined previously, the following result readily follows.
\begin{lemma}\label{thm:tensorS}
The bijection $\mathbf{S}$ is well-defined, i.e., any $n$-dimensional $2d$-th order partial-symmetric tensor $\F\in\C^{n^{2d}}$ uniquely defines an $n$-dimensional $2d$-th degree symmetric conjugate form, and vice versa.
\end{lemma}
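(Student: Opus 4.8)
The plan is to verify three things: that $\mathbf{S}$ carries a partial-symmetric tensor to a genuine symmetric conjugate form, that $\mathbf{S}^{-1}$ carries a symmetric conjugate form to a genuine partial-symmetric tensor, and that the two maps compose to the identities in both directions. Since both $\mathbf{S}$ and $\mathbf{S}^{-1}$ have already been written down explicitly, what remains is essentially careful bookkeeping of permutation multiplicities.

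First I would check that $\mathbf{S}(\F)=f_S$ is well-formed. Starting from $\F(\ov{\bx},\dots,\ov{\bx},\bx,\dots,\bx)=\sum_{i_1,\dots,i_{2d}}\F_{i_1\dots i_{2d}}\ov{x_{i_1}\dots x_{i_d}}x_{i_{d+1}}\dots x_{i_{2d}}$, I would group the terms according to their sorted representatives $1\le i_1\le\dots\le i_d\le n$ and $1\le i_{d+1}\le\dots\le i_{2d}\le n$. By the partial-symmetry relation~\eqref{eq:partial-symmetric}, every tuple in the orbit of $(i_1\dots i_d,i_{d+1}\dots i_{2d})$ under independent permutations of the first block and of the last block carries the identical coefficient $\F_{i_1\dots i_{2d}}$; hence the collected monomial coefficient equals $a_{i_1\dots i_d,i_{d+1}\dots i_{2d}}=|\Pi(i_1\dots i_d)|\cdot|\Pi(i_{d+1}\dots i_{2d})|\,\F_{i_1\dots i_{2d}}$, which exhibits $f_S$ in the form~\eqref{eq:sform}, i.e., a symmetric conjugate form.

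Next I would check that $\mathbf{S}^{-1}(f_S)=\F$ defined by~\eqref{eq:sinverse} is well-defined and partial-symmetric. Any index tuple $(j_1\dots j_d j_{d+1}\dots j_{2d})$ can be uniquely rearranged so that the first $d$ and the last $d$ entries are each nondecreasing, and~\eqref{eq:sinverse} assigns $\F$ a value depending only on those sorted representatives, so the assignment is unambiguous. Since the assigned value is unchanged under any permutation within the first block or within the last block, the defining relation~\eqref{eq:partial-symmetric} holds, so $\F$ is partial-symmetric. Finally, $\mathbf{S}\circ\mathbf{S}^{-1}$ and $\mathbf{S}^{-1}\circ\mathbf{S}$ are the identities: substituting~\eqref{eq:sinverse} into the definition of $\mathbf{S}(\F)$, the orbit size $|\Pi(i_1\dots i_d)|\cdot|\Pi(i_{d+1}\dots i_{2d})|$ cancels the denominator in~\eqref{eq:sinverse}, recovering the coefficient $a_{i_1\dots i_d,i_{d+1}\dots i_{2d}}$ of $f_S$; conversely, applying $\mathbf{S}^{-1}$ to the form produced from $\F$ returns the sorted entries of $\F$, and partial-symmetry propagates these to all entries.

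The only delicate point, and the step I expect to require the most care, is the permutation counting: one must confirm that the orbit of an index tuple under ``permute the first $d$ indices, permute the last $d$ indices'' has size exactly $|\Pi(i_1\dots i_d)|\cdot|\Pi(i_{d+1}\dots i_{2d})|$ even when values are repeated across the two blocks, and that summing a constant over this orbit reproduces exactly the normalizing factor in~\eqref{eq:sinverse}. Everything else is routine, and the argument is the conjugate, partial-symmetric analogue of the classical bijection between forms and symmetric tensors recalled from~\cite{CGLM08}.
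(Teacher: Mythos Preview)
Your proposal is correct. The paper itself does not supply a proof of this lemma at all; it simply remarks that ``the following result readily follows'' from the explicit formulas for $\mathbf{S}$ and $\mathbf{S}^{-1}$ and states the lemma without further argument. Your write-up is a faithful fleshing-out of exactly what the paper leaves implicit, and the one point you flag as delicate --- that the orbit size under independent permutations of the two blocks is $|\Pi(i_1\dots i_d)|\cdot|\Pi(i_{d+1}\dots i_{2d})|$ regardless of cross-block repetitions --- is indeed immediate since the two symmetric groups act on disjoint coordinate positions.
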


\subsection{General conjugate forms and their tensor representations}\label{sec:gform}

In \eqref{eq:sform}, for each monomial the numbers of conjugate variables and the usual variables are always equal. This restriction can be relaxed further. 
\begin{definition} \label{def:gform}
A general conjugate form of the variable $\bx\in\C^n$ is defined as
\begin{equation}\label{eq:gform}
f_G(\bx)=\sum_{k=0}^d \, \sum_{1\le i_1\le \dots \le i_k \le n}\, \sum_{1\le j_1 \le \dots \le j_{d-k} \le n} a_{i_1\dots i_k,j_1\dots j_{d-k}}\ov{x_{i_1}\dots x_{i_k}}x_{j_1} \dots x_{j_{d-k}}.
\end{equation}
\end{definition}

Essentially, $f_G(\bx)$ is the summation of all the possible $d$-th degree monomials, allowing any number of conjugate variables as well as the usual variables in each monomial. Here the subscript `$G$' stands for `general'.
Obviously $f_S(\bx)$ is a special case of $f_G(\bx)$, and $f_G(\bx)$ is a special case of $f_C(\bx)$.

In Section~\ref{sec:condition} we shall show that a general conjugate form $f_G(\bx)$ will always take real values for all $\bx$ if and only if the coefficients of each pair of conjugate monomials are conjugate to each other.
To this end, below we shall explicitly treat the conjugate variables as new variables:\\
(i) $\mathbf{G}(\F)=f_G$: Given a symmetric tensor $\F\in\C^{(2n)^d}$ with its associated multilinear form $\F(\bx^1,\dots,\bx^d)$, the general conjugate form of $\bx\in\C^n$ is defined as
    \begin{equation}\label{eq:tensor-gform}
    f_G(\bx)=\F\bigg(\underbrace{\dbinom{\ov\bx}{\bx},\dots,\dbinom{\ov\bx}{\bx}}_d\bigg).
    \end{equation}
(ii) $\mathbf{G}^{-1}(f_G)=\F$: Given a general conjugate form $f_G$ of $\bx\in\C^n$ as~\eqref{eq:gform}, the components of the symmetric tensor $\F\in\C^{(2n)^d}$ are defined as follows: for any $1\le j_1,\dots,j_d\le 2n$, sort these $j_\ell$'s in a nondecreasing order as $1\le j_{i_1}\le\dots\le j_{i_d}\le 2n$ and let $k={\arg\max}_{1\le \ell\le d}\{j_{i_\ell}\le n\}$, then
      \begin{equation}\label{eq:tensor-gform-1}
      \F_{j_1\dots j_d} = \frac{a_{j_{i_1}\dots j_{i_k},(j_{i_{k+1}}-n)\dots (j_{i_d}-n)}}{|\Pi(j_1\dots j_d)|}.
      \end{equation}
\begin{example}
  {Given} a symmetric second order tensor (matrix) $F=\left( \begin{smallmatrix} \ii & 0 & 1 & 0\\ 0 & 0 & 2 & 0\\ 1 & 2 & 0 & 0 \\ 0 & 0 &0 & 3 \end{smallmatrix}  \right)\in\C^{4^2}$,  the corresponding general conjugate form is $$f_G(\bx)=(\ov{x_1},\ov{x_2},x_1,x_2)F(\ov{x_1},\ov{x_2},x_1,x_2)^{\T}=\ii{\ov{x_1}}^2 + 2\ov{x_1}x_1 + 4 \ov{x_2}x_1 + 3 {x_2}^2.$$
  Conversely, $F=\mathbf{G}^{-1}(f_G)$ can obtained component-wisely by~\eqref{eq:tensor-gform-1}.
\end{example}

Similar {to} Lemma~\ref{thm:tensorS}, the following is easily verified; we leave its proof to the interested readers.
\begin{lemma}\label{thm:tensorG}
The bijection $\mathbf{G}$ is well-defined, i.e., any $2n$-dimensional $d$-th order symmetric tensor $\F\in\C^{(2n)^d}$ uniquely defines an $n$-dimensional $d$-th degree general conjugate form, and vice versa.
\end{lemma}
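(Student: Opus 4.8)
The plan is to realize $\mathbf{G}$ as a composition of two bijections: the classical correspondence between $2n$-dimensional $d$-th order symmetric complex tensors $\F\in\C^{(2n)^d}$ and $2n$-variable $d$-th degree complex forms $f(\by)$, $\by\in\C^{2n}$, recalled in Section~\ref{sec:preparation} (see~\cite{CGLM08}), followed by the ``conjugate substitution'' $\sigma$ that sends a form $f(\by)$ to the function $(\sigma f)(\bx):=f\big(\binom{\ov\bx}{\bx}\big)$ of $\bx\in\C^n$, i.e.\ $y_\ell\mapsto\ov{x_\ell}$ for $1\le\ell\le n$ and $y_\ell\mapsto x_{\ell-n}$ for $n+1\le\ell\le 2n$. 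Definition~\ref{def:gform} shows that the image of $\sigma$ is exactly the set of $n$-dimensional $d$-th degree general conjugate forms, so once $\sigma$ is shown to be a bijection onto that set, $\mathbf{G}=\sigma\circ(\text{classical correspondence})$ is a well-defined bijection, and formula~\eqref{eq:tensor-gform-1} will be recognized as the explicit description of its inverse.

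First I would record the elementary ``no-merging'' observation: under $\sigma$ a monomial $y_{j_1}\cdots y_{j_d}$ with $1\le j_1\le\dots\le j_d\le 2n$ is sent to the conjugate monomial $\ov{x_{j_{1}}\cdots x_{j_{k}}}\,x_{j_{k+1}-n}\cdots x_{j_{d}-n}$, where $k$ is the index defined in~\eqref{eq:tensor-gform-1}; distinct sorted index tuples $(j_1,\dots,j_d)$ yield distinct pairs (sorted conjugate indices, sorted regular indices) and hence distinct conjugate monomials, and conversely every conjugate monomial in~\eqref{eq:gform} arises in this way from a unique sorted tuple. Thus $\sigma$ induces a bijection between the spanning sets of monomials on the two sides; combining this with the classical tensor--form normalization (which divides the coefficient of $y_{j_1}\cdots y_{j_d}$ by $|\Pi(j_1\dots j_d)|$) produces precisely~\eqref{eq:tensor-gform-1}, and shows that $\mathbf{G}^{-1}(f_G)$ so defined is symmetric and satisfies $\mathbf{G}(\mathbf{G}^{-1}(f_G))=f_G$ and $\mathbf{G}^{-1}(\mathbf{G}(\F))=\F$. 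All of this is routine bookkeeping with the multiplicities $|\Pi(\cdot)|$, entirely parallel to the verification underlying Lemma~\ref{thm:tensorS}.

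The one point that requires a genuine argument, and which I expect to be the main obstacle, is the injectivity of $\sigma$: equivalently, that the conjugate monomials appearing in~\eqref{eq:gform} are linearly independent as functions on $\C^n$. Without this a ``general conjugate form'' would not determine its coefficients, and neither $\mathbf{G}$ nor the formula~\eqref{eq:tensor-gform-1} would be well defined. To prove it I would pass to real coordinates: writing $x_j=u_j+\ii v_j$ with $u_j,v_j\in\R$, the functions $\ov{x_1},\dots,\ov{x_n},x_1,\dots,x_n$ are obtained from $u_1,\dots,u_n,v_1,\dots,v_n$ by an invertible $\C$-linear change of variables, so $\C[\ov{x_1},\dots,\ov{x_n},x_1,\dots,x_n]$ and $\C[u_1,\dots,u_n,v_1,\dots,v_n]$ coincide as polynomial rings; a general conjugate form vanishing identically on $\C^n$ is then a polynomial vanishing on all of $\R^{2n}$, hence the zero polynomial, so its coefficients in the monomial basis — in particular those indexed by the sorted tuples in~\eqref{eq:gform} — all vanish. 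With injectivity established and surjectivity immediate from Definition~\ref{def:gform}, $\sigma$ is a bijection, and composing with the classical symmetric-tensor correspondence yields the well-definedness of $\mathbf{G}$ and of its inverse~\eqref{eq:tensor-gform-1}, completing the proof.
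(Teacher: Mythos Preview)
Your proof is correct. The paper itself does not prove this lemma --- it simply says ``similar to Lemma~\ref{thm:tensorS}, the following is easily verified; we leave its proof to the interested readers'' --- so there is no paper proof to compare against directly.

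That said, the one nontrivial step you identify, the injectivity of the conjugate substitution $\sigma$ (equivalently, the linear independence of the conjugate monomials as functions on $\C^n$), is exactly the content of the paper's later Lemma~\ref{thm:multi-zero}, which the paper does prove, but by a quite different route: an induction on $n$ with a base case (Lemma~\ref{thm:uni-zero}) handled via polar coordinates $x=\rho e^{\ii\theta}$ and orthogonality of trigonometric functions. Your argument --- passing to real coordinates $x_j=u_j+\ii v_j$, observing that $(\ov{x_j},x_j)\mapsto(u_j,v_j)$ is an invertible $\C$-linear change of the $2n$ variables, and then invoking the standard fact that a complex polynomial vanishing on all of $\R^{2n}$ is identically zero --- is shorter and more conceptual, and it makes transparent why the formal polynomial ring $\C[\ov{x_1},\dots,\ov{x_n},x_1,\dots,x_n]$ injects into functions on $\C^n$. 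The paper's approach, by contrast, is entirely self-contained and elementary, avoiding any appeal to Zariski-density or change-of-basis reasoning. Either route suffices; yours also has the virtue of establishing Lemma~\ref{thm:multi-zero} as a byproduct rather than needing it as a separate ingredient.
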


To conclude this section we remark that a partial-symmetric tensor (representation for a symmetric conjugate form) is less restrictive than a symmetric tensor (representation for a general conjugate form), while a symmetric conjugate form is a special case of a general conjugate form. One should note that the dimensions of these two tensor representations are actually different.

\section{Real-valued conjugate forms and their tensor representations}\label{sec:condition}

In this section, we study the two types of conjugate complex forms introduced in Section~\ref{sec:preparation}: symmetric conjugate forms and general conjugate forms.

\subsection{Real-valued conjugate polynomials}
Let us first focus on polynomials, and present {the following general characterization of real-valued conjugate complex polynomials}.

\begin{theorem}\label{thm:realvalue}
  A conjugate complex polynomial function is real-valued if and only if the coefficients of any pair of its conjugate monomials are conjugate to each other, i.e., any two monomials $au_C(\bx)$ and $bv_C(\bx)$ with $a$ and $b$ being their coefficients satisfying $\ov{u_C(\bx)}=v_C(\bx)$ must have that $\ov{a}=b$.
\end{theorem}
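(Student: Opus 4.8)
The plan is to prove the two directions separately, with the ``if'' direction being essentially a bookkeeping argument and the ``only if'' direction requiring a genuine algebraic observation about linear independence of conjugate monomials.

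For the ``if'' direction, I would write a conjugate polynomial $f_C(\bx)$ as a sum $\sum_t a_t u_C^{(t)}(\bx)$ over monomials $u_C^{(t)}$. Taking the complex conjugate of the whole function gives $\ov{f_C(\bx)} = \sum_t \ov{a_t}\,\ov{u_C^{(t)}(\bx)}$, and since conjugation swaps $\ov{x_i}$ with $x_i$, each $\ov{u_C^{(t)}(\bx)}$ is again one of the monomials appearing in the general form~\eqref{eq:gform} (more precisely, in the expansion of $f_C$). The hypothesis that conjugate pairs of monomials carry conjugate coefficients says exactly that the coefficient of $\ov{u_C^{(t)}}$ in $f_C$ equals $\ov{a_t}$; hence the reindexed sum $\sum_t \ov{a_t}\,\ov{u_C^{(t)}(\bx)}$ is term-by-term equal to $\sum_s a_s u_C^{(s)}(\bx) = f_C(\bx)$. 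Therefore $\ov{f_C(\bx)} = f_C(\bx)$ for all $\bx$, i.e.\ $f_C$ is real-valued. (One small point to handle cleanly: a monomial may be its own conjugate, e.g.\ $\ov{x_i}x_i$; in that case the condition forces its coefficient to be real, which is consistent with the same argument.)

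For the ``only if'' direction, suppose $f_C$ is real-valued, so $g(\bx) := f_C(\bx) - \ov{f_C(\bx)} \equiv 0$. Note $g$ is itself a conjugate polynomial in $\bx,\ov{\bx}$: writing $f_C = \sum_t a_t u_C^{(t)}$, we get $g = \sum_t \bigl(a_t - \ov{a_{\sigma(t)}}\bigr) u_C^{(t)}$, where $\sigma(t)$ is the index of the monomial conjugate to $u_C^{(t)}$ (and $\sigma$ is an involution on the index set of monomials). The key step is then: \emph{the distinct conjugate monomials $\{\ov{x_{i_1}\dots x_{i_k}}\,x_{j_1}\dots x_{j_{\ell-k}}\}$ are linearly independent as functions on $\C^n$.} Granting this, the identity $g\equiv 0$ forces $a_t - \ov{a_{\sigma(t)}} = 0$ for every $t$, which is precisely the claimed conjugate-coefficient relation. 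To establish the linear independence, I would treat $\ov{x_1},\dots,\ov{x_n}$ and $x_1,\dots,x_n$ as $2n$ formal independent variables --- justified because the map $\R^{2n}\to\C^{2n}$, $(\re x_1,\im x_1,\dots)\mapsto (x_1,\ov{x_1},\dots)$, is invertible over $\R$ (indeed $x_i = \re x_i + \ii\im x_i$, $\ov{x_i} = \re x_i - \ii \im x_i$), so a conjugate polynomial vanishing on all of $\C^n$ corresponds to an ordinary polynomial in $2n$ real (hence, after complexification, formal) variables vanishing identically, whence all its coefficients are zero. This is the standard identity-theorem-for-polynomials fact, and the distinct conjugate monomials map to distinct ordinary monomials in the $2n$ variables, so independence follows.

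The main obstacle --- really the only nontrivial point --- is justifying this passage from ``vanishes as a function on $\C^n$'' to ``vanishes as a formal polynomial in the $2n$ variables $x_i,\ov{x_i}$,'' i.e.\ the linear independence of conjugate monomials. Everything else is reindexing. I would isolate it as a short sublemma: if $\sum_\alpha c_\alpha \ov{\bx}^{\,\alpha}\bx^{\beta_\alpha} = 0$ for all $\bx\in\C^n$, then all $c_\alpha = 0$. The cleanest proof substitutes $\bx = $ real vectors scaled and rotated: for $\bx\in\R^n$ one gets $\sum c_\alpha \bx^{\alpha+\beta_\alpha}=0$; to separate monomials with the same total exponent pattern but different split between conjugate and non-conjugate parts, evaluate at $\bx = e^{\ii\theta}\by$ with $\by\in\R^n$, which multiplies the term indexed by $\alpha$ by $e^{\ii\theta(|\beta_\alpha|-|\alpha|)}$; collecting by the integer $|\beta_\alpha|-|\alpha|$ and then using the real-variable identity theorem on each homogeneous piece finishes it. This handles both the degree grading and the conjugate/regular split, giving $c_\alpha=0$ throughout.
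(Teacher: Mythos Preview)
Your overall architecture matches the paper's: the ``if'' direction by pairing conjugate monomials (the paper writes this as $au_C+\ov a\,\ov{u_C}$ being real; your version $f_C=\ov{f_C}$ after reindexing is equivalent), and the ``only if'' direction by reducing to the identity $f_C-\ov{f_C}\equiv 0$ and then invoking linear independence of the conjugate monomials as functions on~$\C^n$. The paper isolates that independence as its Lemma~\ref{thm:multi-zero}, proved by first treating the univariate case (Lemma~\ref{thm:uni-zero}, via polar coordinates and orthogonality of $\cos k\theta,\sin k\theta$) and then inducting on the number of variables. Your first justification --- the invertible linear change $(x_j,\ov{x_j})\leftrightarrow(\re x_j,\im x_j)$, reducing to the vanishing of an ordinary polynomial in $2n$ real variables --- is correct and in fact shorter than the paper's route; it avoids both the induction and the trigonometric computation.

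However, your proposed ``cleanest proof'' of the sublemma, using a single global phase $\bx=e^{\ii\theta}\by$ with $\by\in\R^n$, has a genuine gap for $n\ge 2$. That substitution tags the monomial $\ov\bx^{\,\alpha}\bx^{\beta}$ only with $e^{\ii\theta(|\beta|-|\alpha|)}$, so after collecting by the integer $|\beta|-|\alpha|$ you still cannot separate distinct pairs $(\alpha,\beta)$ sharing both $|\beta|-|\alpha|$ and $\alpha+\beta$. For instance, $\ov{x_1}x_2$ and $\ov{x_2}x_1$ both have $|\beta|-|\alpha|=0$ and $\alpha+\beta=(1,1)$, so your argument yields only $c_1+c_2=0$, not $c_1=c_2=0$. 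The fix is immediate: use independent phases coordinate-wise, $x_j=e^{\ii\theta_j}y_j$ with $\by\in\R^n$; then the tag becomes $\exp\big(\ii\sum_j\theta_j(\beta_j-\alpha_j)\big)$, and since $(\alpha+\beta,\beta-\alpha)$ determines $(\alpha,\beta)$, Fourier independence in $(\theta_1,\dots,\theta_n)$ followed by the real identity theorem in $\by$ gives all coefficients zero. This is essentially the paper's induction compressed into one step. Alternatively, simply retain your real-variable change-of-coordinates argument, which already suffices and needs no repair.
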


The above condition actually implies that the coefficient of any self-conjugate monomial must be real. Applying Theorem~\ref{thm:realvalue} to the two classes of conjugate forms that we just introduced, the conditions for them to always take real values can now be characterized:
\begin{corollary}\label{thm:condition}
A symmetric conjugate form
$$
f_S(\bx)=\sum_{1\le i_1 \le  \dots \le i_d \le n}\, \sum_{1\le j_1 \le \dots \le j_d \le n} a_{i_1\dots i_d,j_1\dots j_d}\ov{x_{i_1}\dots x_{i_d}}x_{j_1} \dots x_{j_d}
$$
is real-valued if and only if
\begin{equation}\label{eq:sform-condition}
a_{i_1\dots i_d,j_1\dots j_d}=\ov{a_{j_1\dots j_d,i_1\dots i_d}}, \quad\forall\,1\le i_1\le \dots \le i_d \le n, \,1\le j_1 \le \dots \le j_d \le n.
\end{equation}
A general conjugate form
$$
f_G(\bx)=\sum_{k=0}^d \, \sum_{1\le i_1\le \dots \le i_k \le n}\, \sum_{1\le j_1 \le \dots \le j_{d-k} \le n} a_{i_1\dots i_k,j_1\dots j_{d-k}}\ov{x_{i_1}\dots x_{i_k}}x_{j_1} \dots x_{j_{d-k}}
$$
is real-valued if and only if
$$
a_{i_1\dots i_k,j_1\dots j_{d-k}}= \ov{a_{j_1\dots j_{d-k},i_1\dots i_k}}, \quad\forall\,1\le i_1 \le \dots \le i_k \le n, \,1\le j_1 \le \dots \le j_{d-k} \le n,\, 0\leq k \leq d.
$$
\end{corollary}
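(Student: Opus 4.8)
The plan is to derive both characterizations directly from Theorem~\ref{thm:realvalue} by identifying, for each monomial appearing in $f_S$ (respectively $f_G$), its conjugate partner and the corresponding coefficient index. The essential observation is that the set of monomials indexing a symmetric conjugate form (or a general conjugate form) is closed under complex conjugation, so that Theorem~\ref{thm:realvalue} applies directly, without having to introduce any monomials outside the given summation.

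First I would treat the symmetric conjugate form. Consider a typical monomial $u_C(\bx)=\ov{x_{i_1}\dots x_{i_d}}x_{j_1} \dots x_{j_d}$ with sorted indices $1\le i_1\le\dots\le i_d\le n$ and $1\le j_1\le\dots\le j_d\le n$, whose coefficient is $a_{i_1\dots i_d,j_1\dots j_d}$. Taking conjugates gives $\ov{u_C(\bx)}=\ov{x_{j_1} \dots x_{j_d}}x_{i_1}\dots x_{i_d}$, which is again a monomial of the form~\eqref{eq:sform}: its conjugate-variable indices are $j_1\le\dots\le j_d$ and its usual-variable indices are $i_1\le\dots\le i_d$, both already sorted, so it carries the coefficient $a_{j_1\dots j_d,i_1\dots i_d}$. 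Thus conjugation acts on the coefficient index set precisely by swapping the two blocks $(i_1\dots i_d)$ and $(j_1\dots j_d)$. Invoking Theorem~\ref{thm:realvalue}, $f_S$ is real-valued if and only if every such conjugate pair has conjugate coefficients, i.e.\ $\ov{a_{i_1\dots i_d,j_1\dots j_d}}=a_{j_1\dots j_d,i_1\dots i_d}$, which is exactly~\eqref{eq:sform-condition}. When the two blocks coincide as sorted tuples the monomial is self-conjugate, and the condition forces the coefficient to be real, consistent with the remark following Theorem~\ref{thm:realvalue}.

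Next I would repeat the argument for the general conjugate form, where a monomial now has $k$ conjugate variables $\ov{x_{i_1}\dots x_{i_k}}$ and $d-k$ usual variables $x_{j_1}\dots x_{j_{d-k}}$, with coefficient $a_{i_1\dots i_k,j_1\dots j_{d-k}}$. Conjugation sends it to $\ov{x_{j_1}\dots x_{j_{d-k}}}x_{i_1}\dots x_{i_k}$, a monomial with $d-k$ conjugate variables and $k$ usual variables, hence a summand of the block indexed by $d-k$ in~\eqref{eq:gform} carrying the coefficient $a_{j_1\dots j_{d-k},i_1\dots i_k}$. Theorem~\ref{thm:realvalue} then yields $\ov{a_{i_1\dots i_k,j_1\dots j_{d-k}}}=a_{j_1\dots j_{d-k},i_1\dots i_k}$ for every $0\le k\le d$, which is the stated condition. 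The only point requiring care is the bookkeeping: one must check that conjugation is a well-defined involution on the index set of listed monomials, so that it pairs each summand with exactly one other summand (or with itself). Because the indices are kept in nondecreasing order within each block, conjugation merely transposes the two blocks while preserving the ordering, so this involution is manifest and no re-sorting is needed. I expect this verification to be the only genuine, if minor, obstacle, and it becomes routine once the sorting convention is made explicit.
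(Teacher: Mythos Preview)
Your proposal is correct and follows exactly the approach the paper takes: the corollary is stated immediately after Theorem~\ref{thm:realvalue} as a direct specialization, and your argument---identifying each monomial's conjugate partner within the index set of $f_S$ (resp.\ $f_G$) and invoking Theorem~\ref{thm:realvalue}---is precisely the intended justification, spelled out in more detail than the paper itself provides.
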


Let us now prove Theorem~\ref{thm:realvalue}. We first show the `if' part of the theorem, which is quite straightforward. To see this, for any pair of conjugate monomials (including self-conjugate monomial as a special case) of a conjugate complex polynomial: $au_C(\bx)$ and $b\ov{u_C(\bx)}$ with $a,b\in\C$ being their coefficients, 
if $\ov{a}=b$, then
$$
\ov{au_C(\bx)+b\ov{u_C(\bx)}} = \ov{au_C(\bx)+\ov{a}\ov{u_C(\bx)}} = au_C(\bx) + \ov{a}\ov{u_C(\bx)} = au_C(\bx)+b\ov{u_C(\bx)},
$$
implying that $a u_C(\bx)+b \ov{u_C(\bx)}$ is real-valued. Since all the conjugate monomials of a conjugate complex polynomial can be partitioned by conjugate pairs and self-conjugate monomials, the result follows immediately.


To proceed to the `only if' part of the theorem, let us first consider an easier case of univariate conjugate polynomials.
\begin{lemma}\label{thm:uni-zero}
A univariate conjugate complex polynomial $\sum_{\ell=0}^d\sum_{k=0}^{\ell} b_{k,\ell-k}\ov{x}^kx^{\ell-k}=0$ for all $x\in\C$ if and only if all its coefficients are zeros, i.e., $b_{k,\ell-k}=0$ for all $0\le \ell\le d$ and $0\le k\le \ell$.
\end{lemma}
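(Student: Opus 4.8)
The plan is to treat $\ov{x}$ and $x$ as (conjugate but) independent quantities by writing $x = r e^{\ii\theta}$, so that $\ov{x}^k x^{\ell-k} = r^\ell e^{\ii(\ell-2k)\theta}$. First I would fix the modulus $r>0$ and view the polynomial identity as a trigonometric identity in $\theta$: collecting terms with the same frequency $\ell-2k$, the coefficient of $e^{\ii m\theta}$ (for each integer $m$ with $|m|\le d$) is a polynomial in $r$, namely $\sum_{\ell,k:\,\ell-2k=m} b_{k,\ell-k}\, r^\ell$. Since the functions $\{e^{\ii m\theta}\}_{m\in\Z}$ are linearly independent on any interval of $\theta$, the vanishing of the conjugate polynomial for all $\theta$ (with $r$ fixed) forces each such $r$-polynomial to vanish for that $r$; letting $r$ range over $(0,\infty)$ and using that a one-variable polynomial with infinitely many roots is identically zero, I conclude $\sum_{\ell-2k=m} b_{k,\ell-k} r^\ell \equiv 0$ as a polynomial in $r$.

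Next I would extract the individual coefficients. For a fixed frequency $m$, the exponents $\ell$ appearing in $\sum_{\ell-2k=m} b_{k,\ell-k} r^\ell$ are distinct (each pair $(k,\ell-k)$ with $\ell-2k=m$ is determined by $\ell$ alone, via $k=(\ell-m)/2$), so equating coefficients of each power $r^\ell$ in the identically-zero polynomial yields $b_{k,\ell-k}=0$ for every admissible $(k,\ell-k)$. Ranging over all $m\in\{-d,\dots,d\}$ exhausts all pairs $(k,\ell-k)$ with $0\le k\le \ell\le d$, giving the claim. The converse direction (all coefficients zero implies the polynomial is zero) is trivial.

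The main obstacle — really the only subtlety — is the first reduction: justifying cleanly that vanishing of the conjugate polynomial as a function of the real pair $(\re x,\im x)$ is equivalent to vanishing after the substitution $x=re^{\ii\theta}$ and then separating the "$r$-degrees of freedom" from the "$\theta$-degrees of freedom." The cleanest way is the orthogonality/linear-independence argument for $\{e^{\ii m\theta}\}$ on $[0,2\pi]$ at fixed $r$, followed by the infinitely-many-roots argument in $r$; one should be slightly careful that $r=0$ is excluded in the polar substitution, but since a polynomial in $r$ vanishing on $(0,\infty)$ is the zero polynomial, this causes no trouble. An alternative route — differentiating $f_C$ with respect to $x$ and $\ov x$ (Wirtinger derivatives) and evaluating at $0$ — would also work and avoids polar coordinates, but the trigonometric argument is the most self-contained and is the one I would write up.
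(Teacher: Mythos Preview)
Your proposal is correct and uses the same polar substitution $x=re^{\ii\theta}$ as the paper, but you decouple the two variables in the opposite order, which turns out to be tidier. The paper fixes $\theta$ first, views the identity as a polynomial in $\rho$, peels off the top-degree coefficient $\sum_{k=0}^d b_{k,d-k}e^{\ii(d-2k)\theta}$, splits that into real and imaginary parts, and invokes orthogonality of $\cos(m\theta),\sin(m\theta)$ to kill each $b_{k,d-k}$; it then repeats the argument with the degree lowered by one. Your route---fix $r$ first, use linear independence of $\{e^{\ii m\theta}\}$ to isolate the $r$-polynomial attached to each frequency $m$, then read off coefficients since distinct $\ell$'s give distinct powers of $r$---avoids both the induction on degree and the real/imaginary split, at the modest cost of the observation that for fixed $m$ the map $\ell\mapsto(k,\ell-k)$ with $k=(\ell-m)/2$ is injective. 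Either ordering works; yours is a bit more direct.
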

\begin{proof}
Let $x = \rho e^{\ii \theta}$ with $\rho\ge0$ and $\theta\in[0,2\pi)$, and the identity can be rewritten as
\begin{equation}\label{eq:unipoly}
\sum_{\ell=0}^d\left(\sum_{k=0}^{\ell} b_{k,\ell-k} e^{\ii(\ell-2k)\theta}\right)\rho^\ell  = 0.
\end{equation}
For any fixed $\theta$, the function can be viewed as a polynomial with respect to $\rho$. Therefore the coefficient of the highest degree monomial $\rho^d$ must be zero, i.e.,
$$
\sum_{k=0}^d b_{k,d-k} e^{\ii (d-2k)\theta} = 0, \quad \forall\, \theta\in[0,2\pi).
$$
Consequently we have for any $\theta\in[0,2\pi)$,
\begin{align}
&\sum_{k=0}^d \re(b_{k,d-k})\cos ((d-2k)\theta)-\sum_{k=0}^d \im(b_{k,d-k})\sin ((d-2k)\theta) =0,
\label{eq:unipoly1} \\
&\sum_{k=0}^d \im(b_{k,d-k})\cos ((d-2k)\theta) + \sum_{k=0}^d \re( b_{k,d-k}) \sin ((d-2k)\theta)=0. \label{eq:unipoly2}
\end{align}
The first and second parts of~\eqref{eq:unipoly1} can be respectively simplified as
\begin{align*}
&~~~~\sum_{k=0}^d \re(b_{k,d-k}) \cos ((d-2k)\theta)\\
&=\left\{
\begin{array}{ll}
\sum_{k=0}^{\frac{d-1}{2}}\re(b_{k,d-k}+b_{d-k,k})\cos((d-2k)\theta) & d\mbox{ is odd}\\
\sum_{k=0}^{ \frac{d-2}{2}}\re(b_{k,d-k}+b_{d-k,k}) \cos((d-2k)\theta) + \re(b_{d/2,d/2}) & d\mbox{ is even}
 \end{array}
 \right.
\end{align*}
and
$$
\sum_{k=0}^d \im(b_{k,d-k}) \sin ((d-2k)\theta)= \sum_{k=0}^{\lfloor \frac{d-1}{2}\rfloor}
\im(b_{k,d-k}-b_{d-k,k}) \sin((d-2k)\theta).
$$
By the orthogonality of the trigonometric functions, the above further leads to
$$
\re(b_{k,d-k}+b_{d-k,k})=\im(b_{k,d-k}-b_{d-k,k})=0, \quad\forall\,k=0,1,\dots,d.
$$
Similarly,~\eqref{eq:unipoly2} implies
$$
\re(b_{k,d-k}-b_{d-k,k})=\im(b_{k,d-k}+b_{d-k,k})=0, \quad\forall\,k=0,1,\dots,d.
$$
Combining the above two sets of identities yields
$$
b_{k,d-k}=0, \quad\forall\, k=0,1,\dots,d.
$$
The degree of the function in~\eqref{eq:unipoly} (in terms of $\rho$) is then reduced by 1. The desired result follows obviously.
\end{proof}

Let us now extend Lemma~\ref{thm:uni-zero} to general multivariate conjugate polynomials.
\begin{lemma}\label{thm:multi-zero}
An $n$-dimensional $d$-th degree conjugate complex polynomial 
$$
  f_C(\bx)=\sum_{\ell=0}^d \sum_{k=0}^\ell\, \sum_{1\le i_1\le \dots \le i_k \le n}\, \sum_{1\le j_1 \le \dots \le j_{\ell-k} \le n} b_{i_1\dots i_k,j_1\dots j_{\ell-k}}\ov{x_{i_1}\dots x_{i_k}}x_{j_1} \dots x_{j_{\ell-k}} = 0
$$
for all $\bx\in\C^n$ if and only if all its coefficients are zeros, i.e.,
$b_{i_1\dots i_k,j_1\dots j_{\ell-k}}=0$ for all $0\le\ell\le d$, $0\le k\le \ell$,
$1\le i_1 \le\dots \le i_k \le n$ and $1\le j_1 \le\dots \le j_{d-k} \le n$.
\end{lemma}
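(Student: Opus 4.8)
\textbf{Proof proposal for Lemma~\ref{thm:multi-zero}.}

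The plan is to reduce the multivariate statement to the univariate case handled in Lemma~\ref{thm:uni-zero} by a substitution that collapses the $n$ variables into a single complex variable while keeping enough information to recover each coefficient. First I would dispose of the `if' direction, which is trivial: if all coefficients vanish then $f_C\equiv 0$. For the `only if' direction, the key observation is that distinct monomials $\ov{x_{i_1}\dots x_{i_k}}x_{j_1}\dots x_{j_{\ell-k}}$ are distinguished by two pieces of data: their \emph{total degree} $\ell$ and, for fixed $\ell$, their multiset of conjugate indices $\{i_1,\dots,i_k\}$ together with their multiset of non-conjugate indices $\{j_1,\dots,j_{\ell-k}\}$.

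The substitution I would use is $x_r = t\, e^{\ii\phi_r}$ for $r=1,\dots,n$, where $t\in\C$ is a single complex variable and $\phi_1,\dots,\phi_n$ are real parameters to be chosen. Under this substitution a monomial of total degree $\ell$ with $k$ conjugate factors becomes $\ov{t}^{\,k} t^{\,\ell-k}$ times the phase factor $\exp\!\big(\ii(\sum_{s=1}^{\ell-k}\phi_{j_s}-\sum_{s=1}^{k}\phi_{i_s})\big)$, so $f_C$ collapses to a univariate conjugate polynomial in $t$ whose coefficient of $\ov{t}^{\,k}t^{\,\ell-k}$ is
$$
\sum_{\substack{1\le i_1\le\dots\le i_k\le n\\ 1\le j_1\le\dots\le j_{\ell-k}\le n}} b_{i_1\dots i_k,j_1\dots j_{\ell-k}}\,\exp\!\Big(\ii\big(\textstyle\sum_{s}\phi_{j_s}-\sum_{s}\phi_{i_s}\big)\Big).
$$
Since this vanishes for all $t\in\C$, Lemma~\ref{thm:uni-zero} forces each such coefficient sum to be zero, for every $\ell$ and every $k$, and for \emph{every} choice of $\phi_1,\dots,\phi_n$. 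It remains to extract the individual $b$'s from these phase-weighted sums. To do this I would pick $\phi_r = 2\pi r M^{-r-1}$ (or more simply treat $z_r := e^{\ii\phi_r}$ as independent variables ranging over the torus) so that the map sending a monomial to its phase $\prod_{s} z_{j_s}\prod_{s} z_{i_s}^{-1}$ is injective on the set of monomials of a given $(\ell,k)$-type; equivalently, view the vanishing sum as a trigonometric polynomial (a Laurent polynomial) in $z_1,\dots,z_n$ that is identically zero on the torus, and invoke the fact that a Laurent polynomial vanishing on $(\SI^1)^n$ has all coefficients zero. One must check that two distinct ordered index tuples $(i_1\dots i_k,j_1\dots j_{\ell-k})$ and $(i'_1\dots i'_k,j'_1\dots j'_{\ell-k})$ of the same type cannot produce the same Laurent monomial $\prod z_{j_s}\prod z_{i_s}^{-1}$: if they did, then $\sum_s e_{j_s}-\sum_s e_{i_s}=\sum_s e_{j'_s}-\sum_s e_{i'_s}$ as vectors in $\Z^n$, and since the conjugate indices contribute only negative entries and the non-conjugate ones only positive entries, matching the positive and negative parts separately gives equality of the multisets, hence (because indices are listed in nondecreasing order) equality of the tuples. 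Therefore each $b_{i_1\dots i_k,j_1\dots j_{\ell-k}}=0$, as desired.

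The main obstacle is precisely this last bookkeeping step — verifying that the collapsing substitution does not accidentally identify two genuinely different monomials, so that ``coefficient of the Laurent monomial'' really does pick out a single $b$. Everything else (the phase computation, the reduction to Lemma~\ref{thm:uni-zero} in the variable $t$, and the standard uniqueness of Laurent/trigonometric polynomial coefficients) is routine. An alternative that sidesteps the torus argument is an induction on $n$: freeze $x_n = \rho\, e^{\ii\theta}$, expand $f_C$ in powers of $\ov{x_n}$ and $x_n$ with coefficients that are conjugate polynomials in $x_1,\dots,x_{n-1}$, apply Lemma~\ref{thm:uni-zero} in $x_n$ (treating the other variables as fixed scalars) to conclude each such coefficient polynomial vanishes identically, and then invoke the inductive hypothesis; this keeps the argument entirely within the one-variable lemma at the cost of a slightly more elaborate inductive setup.
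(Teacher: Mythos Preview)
Your main argument has a genuine gap at the injectivity step. The claim that ``the conjugate indices contribute only negative entries and the non-conjugate ones only positive entries'' fails whenever an index appears both among the $i$'s and among the $j$'s, because cancellation occurs in the exponent vector. Concretely, with $k=\ell-k=1$ the monomials $\ov{x_1}x_1$ and $\ov{x_2}x_2$ both map to the constant Laurent monomial (since $e_1-e_1 = 0 = e_2 - e_2$), so from the vanishing of the Laurent polynomial on the torus you can only conclude $\sum_r b_{r,r}=0$, not each $b_{r,r}=0$ separately. For another collision, with $k=2,\ \ell-k=1$, the monomials $\ov{x_1x_2}\,x_1$ and $\ov{{x_2}^2}\,x_2$ both collapse to $z_2^{-1}$. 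The underlying reason is that the substitution $x_r = t\,e^{\ii\phi_r}$ restricts $f_C$ to a thin subset of $\C^n$ on which distinct conjugate polynomials can agree identically (for instance $|x_1|^2$ and $|x_2|^2$ become equal there), so no amount of varying the phases will recover the lost information.

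The alternative you sketch at the end --- induction on $n$, peeling off one variable via Lemma~\ref{thm:uni-zero} and then applying the inductive hypothesis to the $(n-1)$-variable coefficient polynomials --- is correct and is exactly the route the paper takes (with $x_1$ in place of $x_n$, which is immaterial). So the right proof is already present in your proposal; it is just not the one you foregrounded.
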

\begin{proof}
We shall prove the result by induction on the dimension $n$. The case $n=1$ is already shown in Lemma~\ref{thm:uni-zero}. Suppose the claim holds for all positive integers no more than $n-1$. Then for the dimension $n$, the conjugate polynomial $f_C(\bx)$ can be rewritten according to the degrees of $\ov{x_1}$ and $x_1$ as
$$
f_C(\bx) = \sum_{\ell=0}^d\sum_{k=0}^{\ell}\ov{x_1}^k {x_1}^{\ell-k} h_C^{\ell k} (x_2,\dots,x_n).
$$
For any given $x_2,\dots,x_n\in\C$, taking $f_C$ as a univariate conjugate polynomial of $x_1$, by Lemma~\ref{thm:uni-zero} we have
$$
h_C^{\ell k}(x_2,\dots,x_n)=0, \quad\forall\, 0\le \ell\le d,\, 0\le k\le \ell.
$$
For any given $(\ell,k)$, as $h_C^{\ell k}(x_2,\dots,x_n)$ is a conjugate polynomial of dimension at most $n-1$, by the induction hypothesis all the coefficients of $h_C^{\ell k}$ are zeros. Observing that all the coefficients of $f_C$ are distributed in the coefficients of $h_C^{\ell k}$ for all $(\ell,k)$, the result is proven for dimension $n$.
\end{proof}

With Lemma~\ref{thm:multi-zero} at hand, we can finally complete the `only if' part of Theorem~\ref{thm:realvalue}. Suppose a conjugate polynomial $f(\bx)$ is real-valued for all $\bx\in\C^n$. Clearly we have $f(\bx)-\ov{f(\bx)}=0$ for all $\bx\in\C^n$, i.e.,
$$
 \sum_{\ell=0}^d \sum_{k=0}^\ell \,
 \sum_{1\le i_1\le \dots \le i_k \le n} \,\sum_{1\le j_1 \le \dots \le j_{\ell-k} \le n}
 \left(b_{i_1\dots i_k,j_1\dots j_{\ell-k}}-\ov{b_{j_1\dots j_{\ell-k},i_1\dots i_k}}\right)
 \ov{x_{i_1}\dots x_{i_k}}x_{j_1} \dots x_{j_{\ell-k}}=0.
$$
By Lemma~\ref{thm:multi-zero} it follows that $b_{i_1\dots i_k,j_1\dots j_{\ell-k}} -\ov{b_{j_1\dots j_{\ell-k},i_1\dots i_k}}=0$ for all $0\le\ell\le d$, $0\le k\le \ell$, $1\le i_1 \le\dots \le i_k \le n$ and $1\le j_1 \le\dots \le j_{d-k} \le n$, proving the `only if' part of Theorem~\ref{thm:realvalue}.

With Theorem~\ref{thm:realvalue}, in particular Corollary~\ref{thm:condition}, we are in a position to characterize the tensor representations for real-valued conjugate forms. Before concluding this subsection, let us present an alternative representation of real-valued symmetric conjugate forms, as a consequence of Corollary~\ref{thm:condition}.

\begin{proposition} \label{thm:polyrealvalued}
A symmetric conjugate form $f_S(\bx)$ is real-valued if and only if $$f_S(\bx)=\sum_{k=1}^m\alpha_k |h_k(\bx)|^2,$$ where $h_k(\bx)$ is a complex form and $\alpha_k\in\R$ for all $1\le k\le m$.
\end{proposition}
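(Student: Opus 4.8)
The plan is to prove the two directions separately, with the bulk of the work in the ``only if'' direction. The ``if'' direction is immediate: each $|h_k(\bx)|^2$ is a symmetric conjugate form by the Example following Definition~\ref{def:sform}, and it is real-valued since $\ov{|h_k(\bx)|^2}=|h_k(\bx)|^2$; a real linear combination of real-valued symmetric conjugate forms is again a real-valued symmetric conjugate form, so $\sum_{k=1}^m\alpha_k|h_k(\bx)|^2$ has the desired property.

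For the ``only if'' direction, I would reformulate the problem in terms of the tensor representation and diagonalize a Hermitian matrix. First, index the $d$-th degree monomials: let $N$ be the number of multi-indices $I=(i_1\le\dots\le i_d)$ with $1\le i_1\le\dots\le i_d\le n$, and for each such $I$ write $\bx^I:=x_{i_1}\cdots x_{i_d}$. Then any symmetric conjugate form can be written as $f_S(\bx)=\sum_{I,J}a_{I,J}\,\ov{\bx^I}\,\bx^J$, i.e.\ $f_S(\bx)=\bv(\bx)^{\HH}A\,\bv(\bx)$ where $\bv(\bx)\in\C^N$ is the vector with entries $\bx^I$ and $A=(a_{I,J})\in\C^{N\times N}$. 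By Corollary~\ref{thm:condition}, $f_S$ real-valued is equivalent to $a_{I,J}=\ov{a_{J,I}}$ for all $I,J$, i.e.\ $A^{\HH}=A$, so $A$ is Hermitian. Now apply the spectral theorem: $A=\sum_{k=1}^N\alpha_k\bw_k\bw_k^{\HH}$ with $\alpha_k\in\R$ the eigenvalues and $\{\bw_k\}$ an orthonormal eigenbasis. Substituting,
$$
f_S(\bx)=\sum_{k=1}^N\alpha_k\,\bv(\bx)^{\HH}\bw_k\bw_k^{\HH}\bv(\bx)=\sum_{k=1}^N\alpha_k\,\bigl|\bw_k^{\HH}\bv(\bx)\bigr|^2.
$$
Finally I must check that each $h_k(\bx):=\ov{\bw_k}^{\T}\bv(\bx)$ is a genuine complex form of degree $d$: indeed $\bw_k^{\HH}\bv(\bx)=\sum_I(\ov{\bw_k})_I\,\bx^I$ is a linear combination of the degree-$d$ monomials $\bx^I$, hence a $d$-th degree complex form in the sense of~\eqref{eq:symmetric}. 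Setting $m=N$ and possibly discarding the terms with $\alpha_k=0$ yields the claimed representation.

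The only genuine subtlety is bookkeeping: one must be careful that the coefficient matrix $A$ is exactly Hermitian and not Hermitian-up-to-combinatorial-factors. The monomials $\bx^I$ form a basis of the space of degree-$d$ forms, so the representation $f_S(\bx)=\bv(\bx)^{\HH}A\bv(\bx)$ together with the expansion in Definition~\ref{def:sform} pins down $A$ uniquely as $A=(a_{I,J})$, and Corollary~\ref{thm:condition} gives $A^{\HH}=A$ on the nose; no normalizing factors intervene at this level (they only appear when passing to the partial-symmetric tensor $\mathbf{S}^{-1}(f_S)$, which we do not need here). Thus the diagonalization step goes through cleanly, and this is essentially the whole proof.
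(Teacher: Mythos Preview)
Your argument is correct. The ``if'' direction matches the paper. For ``only if'', however, you take a genuinely different route from the paper.

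The paper works monomial-pair by monomial-pair: once Corollary~\ref{thm:condition} gives $a_{I,J}=\ov{a_{J,I}}$, it applies the elementary polarization identity
\[
a_{I,J}\,\ov{\bx^I}\bx^J+\ov{a_{I,J}}\,\ov{\bx^J}\bx^I
=|\bx^I+a_{I,J}\bx^J|^2-|\bx^I|^2-|a_{I,J}\bx^J|^2
\]
to each conjugate pair and sums. This is completely elementary (no spectral theorem), and it makes the forms $h_k$ explicit monomials or binomials; the price is a less economical decomposition, with roughly three terms per monomial pair. Your approach instead packages the coefficients into the Hermitian matrix $A=(a_{I,J})$ and invokes the spectral theorem to write $A=\sum_k\alpha_k\bw_k\bw_k^{\HH}$, which immediately yields $f_S(\bx)=\sum_k\alpha_k|h_k(\bx)|^2$ with $h_k(\bx)=\bw_k^{\HH}\bv(\bx)$. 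This is cleaner and gives at most $N=\binom{n+d-1}{d}$ terms with mutually orthogonal $h_k$'s; it also dovetails nicely with the square-matrix-flattening viewpoint the paper adopts later (see the remark after Definition~\ref{thm:cps} and the discussion following Theorem~\ref{thm:Banach2}). Either method is acceptable; yours uses a heavier tool but yields a sharper structural statement.
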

\begin{proof}
The `if' part is trivial. Next we prove the `only if' part of the proposition. If $f_S(\bx)$ is real-valued, by Corollary~\ref{thm:condition} we have~\eqref{eq:sform-condition}. Then for any $1\le i_1\le \dots \le i_d \le n$ and $1\le j_1 \le \dots \le j_d \le n$, the sum of the conjugate pair satisfies
\begin{align*}
&~~~~a_{i_1\dots i_d,j_1\dots j_d}\ov{ x_{i_1}\dots x_{i_d}}x_{j_1} \dots x_{j_d}+ a_{j_1\dots j_d,i_1\dots i_d}\ov{ x_{j_1} \dots x_{j_d}}x_{i_1}\dots x_{i_d}\\
&=a_{i_1\dots i_d,j_1\dots j_d}\ov{ x_{i_1}\dots x_{i_d}}x_{j_1} \dots x_{j_d}+ \ov{a_{i_1\dots i_d,j_1\dots j_d}}\ov{ x_{j_1} \dots x_{j_d}}x_{i_1}\dots x_{i_d}\\
&=|x_{i_1}\dots x_{i_d} + a_{i_1\dots i_d,j_1\dots j_d}x_{j_1}
\dots x_{j_d} |^2-|x_{i_1}\dots x_{i_d}|^2-|a_{i_1\dots
i_dj_1\dots j_d}x_{j_1} \dots x_{j_d} |^2.
\end{align*}
Summing up all such pairs (taking half if it is a self-conjugate pair), the conclusion follows.
\end{proof}

Similarly we have the following result for general conjugate forms.
\begin{proposition} A general conjugate form $f_G(\bx)$ is real-valued if and only if
$$f_G(\bx)=\sum_{k=1}^m \alpha_k |h_k(\bx)|^2,$$
where $h_k(\bx)$ is a complex polynomial and $\alpha_k\in\R$ for all $1\le k\le m$.
\end{proposition}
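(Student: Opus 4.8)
The plan is to mirror the proof of Proposition~\ref{thm:polyrealvalued}, the single new twist being that the auxiliary complex polynomials $h_k$ need no longer be homogeneous. The \emph{if} part is immediate: for $\alpha_k\in\R$ and $h_k$ a complex polynomial, $\alpha_k|h_k(\bx)|^2$ is real for every $\bx\in\C^n$, and a finite sum of real-valued functions is real-valued; homogeneity plays no role.

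For the \emph{only if} part, I would first apply the general-conjugate-form case of Corollary~\ref{thm:condition} to a real-valued $f_G$, obtaining $a_{i_1\dots i_k,j_1\dots j_{d-k}}=\ov{a_{j_1\dots j_{d-k},i_1\dots i_k}}$ for all $0\le k\le d$ and all sorted index blocks. Next I would partition the monomials of $f_G$ into conjugate pairs and self-conjugate monomials: the conjugate partner of the monomial indexed by $(i_1\dots i_k,\,j_1\dots j_{d-k})$ is the one indexed by $(j_1\dots j_{d-k},\,i_1\dots i_k)$, which again appears in the expansion of $f_G$ (now inside the block with $d-k$ conjugate variables), so swapping the two index blocks is a well-defined involution on the monomials and the partition is genuine. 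For a true pair, set $a:=a_{i_1\dots i_k,j_1\dots j_{d-k}}$, $p:=x_{i_1}\dots x_{i_k}$, $q:=x_{j_1}\dots x_{j_{d-k}}$; the relation above yields the polarization identity
\[
a\,\ov{p}\,q+\ov{a}\,\ov{q}\,p=|p+aq|^2-|p|^2-|aq|^2,
\]
and $p+aq$, $p$, $aq$ are all complex polynomials, contributing three terms of the required shape with coefficients in $\{1,-1\}$. For a self-conjugate monomial --- which forces $d=2k$ and equal index multisets --- Corollary~\ref{thm:condition} makes its coefficient real, and the monomial is already of the form $\alpha\,|x_{i_1}\dots x_{i_k}|^2$ with $\alpha\in\R$. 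Summing over all pairs (a self-conjugate pair counted once) gives $f_G(\bx)=\sum_k\alpha_k|h_k(\bx)|^2$ with finitely many terms.

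I do not expect a genuine obstacle: this is a direct analogue of Proposition~\ref{thm:polyrealvalued} resting on the same polarization identity. The only point deserving an explicit remark is that, unlike the symmetric conjugate case where every monomial carries exactly $d$ conjugate and $d$ regular variables and the resulting $h_k$ is a form of degree $d$, here a conjugate pair joins blocks of sizes $k$ and $d-k$, so $p+aq$ is in general inhomogeneous (for instance a constant plus a monomial when $k=0$); this is precisely why the statement must speak of complex \emph{polynomials} rather than complex forms. A secondary bookkeeping point is to confirm that letting $k$ run from $0$ to $d$ together with all sorted index blocks enumerates each monomial of $f_G$ exactly once, so that the block-swap involution partitions the monomials cleanly.
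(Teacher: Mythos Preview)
Your proposal is correct and follows essentially the same approach as the paper, which does not spell out a separate proof but simply states that the result follows ``similarly'' to Proposition~\ref{thm:polyrealvalued}. Your remark that the conjugate partner now lives in the block with $d-k$ conjugate variables, and hence $p+aq$ may be inhomogeneous, is precisely the one new observation needed to carry the argument over.
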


\subsection{Conjugate partial-symmetric tensors} \label{sec:ctensor}

As any symmetric conjugate form uniquely defines a partial-symmetric tensor (Lemma~\ref{thm:tensorS}), it is interesting to see more structured tensor representations for real-valued symmetric conjugate forms.

\begin{definition}\label{thm:cps}
    An even order tensor $\F\in\C^{n^{2d}}$ is called conjugate partial-symmetric if\\
    (i) $\F_{i_1\dots i_d i_{d+1}\dots i_{2d}} = \F_{j_1\dots j_d j_{d+1}\dots j_{2d}}$ for all $(j_1\dots j_d) \in \Pi (i_1\dots i_d)$ and $(j_{d+1}\dots j_{2d})\in\Pi(i_{d+1}\dots i_{2d})$, and \\
    (ii) $\F_{i_1\dots i_d i_{d+1}\dots i_{2d}} = \ov{\F_{i_{d+1}\dots i_{2d} i_1\dots i_d}}$\\
    hold for all $1\le i_1\le\dots\le i_{d}\le n$ and $1\le i_{d+1}\le\dots\le i_{2d}\le n$.
\end{definition}

We remark that when $d=1$, a conjugate partial-symmetric tensor is simply a Hermitian matrix. For a general even degree, the square matrix flattening of a conjugate partial-symmetric tensor; i.e., flattening a tensor in $\C^{n^{2d}}$ to a matrix in $\C^{(n^d)^2}$ by grouping the tensor's first $d$ modes into the rows of the matrix and its last $d$ modes into the columns of the matrix, is actually a Hermitian matrix.
The conjugate partial-symmetric tensors and the real-valued symmetric conjugate forms are connected as follows.

\begin{proposition} \label{thm:conju-mapping}
Any $n$-dimensional $2d$-th order conjugate partial-symmetric tensor $\F\in\C^{n^{2d}}$ uniquely defines (under $\mathbf{S}$) an $n$-dimensional $2d$-th degree real-valued symmetric conjugate form, and vice versa (under $\mathbf{S}^{-1}$).
\end{proposition}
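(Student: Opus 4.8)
The plan is to show that the bijection $\mathbf{S}$ of Lemma~\ref{thm:tensorS} restricts to a bijection between conjugate partial-symmetric tensors in $\C^{n^{2d}}$ and real-valued symmetric conjugate forms of degree $2d$. First I would observe that condition~(i) of Definition~\ref{thm:cps} is literally the partial-symmetricity~\eqref{eq:partial-symmetric}; hence a conjugate partial-symmetric tensor is in particular partial-symmetric, so $\mathbf{S}$ applies to it, and conversely $\mathbf{S}^{-1}(f_S)$ is automatically partial-symmetric for any symmetric conjugate form $f_S$. Consequently the whole statement reduces to the single claim: a partial-symmetric $\F$ additionally satisfies condition~(ii) if and only if $f_S=\mathbf{S}(\F)$ is real-valued.

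Next I would invoke Corollary~\ref{thm:condition}, by which $f_S$ is real-valued if and only if its coefficients obey~\eqref{eq:sform-condition}, namely $a_{i_1\dots i_d,j_1\dots j_d}=\ov{a_{j_1\dots j_d,i_1\dots i_d}}$ for all sorted multi-indices. Using the explicit inverse formula~\eqref{eq:sinverse}, I would rewrite this as a condition on the tensor entries $\F_{i_1\dots i_d i_{d+1}\dots i_{2d}}=a_{i_1\dots i_d,i_{d+1}\dots i_{2d}}/\big(|\Pi(i_1\dots i_d)|\cdot|\Pi(i_{d+1}\dots i_{2d})|\big)$. The key observation — essentially the only thing to notice — is that this normalization factor is invariant under swapping the two blocks of indices, since it is a product and each factor depends only on the multiset of indices in that block; thus $\F_{i_1\dots i_d i_{d+1}\dots i_{2d}}$ and $\F_{i_{d+1}\dots i_{2d} i_1\dots i_d}$ carry the same denominator. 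Therefore condition~(ii), $\F_{i_1\dots i_d i_{d+1}\dots i_{2d}}=\ov{\F_{i_{d+1}\dots i_{2d} i_1\dots i_d}}$, is equivalent to $a_{i_1\dots i_d,i_{d+1}\dots i_{2d}}=\ov{a_{i_{d+1}\dots i_{2d},i_1\dots i_d}}$, which is precisely~\eqref{eq:sform-condition}. Chaining the equivalences yields the result, and uniqueness in both directions is inherited from Lemma~\ref{thm:tensorS}.

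As for the main obstacle: there is none of real substance, since the statement is just a restriction of an already-established bijection and the computation is one line once~\eqref{eq:sinverse} is in hand. The only points that demand any care are bookkeeping: making sure that~(ii), stated for sorted index blocks, propagates to all index blocks — which follows immediately from~(i) — and making sure the reindexing in~\eqref{eq:sinverse}, where one sorts within each of the two $d$-tuples separately, is handled consistently; both are subsumed by the well-definedness already proved in Lemma~\ref{thm:tensorS}, which I would cite rather than re-derive. I would also remark in passing, mirroring the discussion after Definition~\ref{thm:cps}, that~(ii) says exactly that the $n^d\times n^d$ square flattening of $\F$ is Hermitian, which gives a conceptually cleaner reason why the associated form is real-valued and directly generalizes the quadratic case $\bx^{\HH}A\bx$.
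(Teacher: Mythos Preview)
Your proposal is correct and follows essentially the same approach as the paper. The only cosmetic difference is that the paper verifies the forward direction by a direct computation showing $\ov{f_S(\bx)}=f_S(\bx)$ from condition~(ii), whereas you route both directions through Corollary~\ref{thm:condition} and the observation that the normalization factor in~\eqref{eq:sinverse} is block-swap invariant; the backward direction is handled identically in both.
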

\begin{proof}
For any conjugate partial-symmetric tensor $\F$, $f_S=\mathbf{S}(\F)$ satisfies
\begin{align*}
\ov{f_S(\bx)}=\ov{\F(\underbrace{\ov{\bx},\dots,\ov{\bx}}_d,\underbrace{\bx,\dots,\bx}_d)}
&= \sum_{i_1=1}^n\dots\sum_{i_{2d}=1}^n \ov{\F_{i_1\dots i_d i_{d+1}\dots i_{2d}}\ov{x_{i_1}\dots x_{i_d}}x_{i_{d+1}} \dots x_{i_{2d}}}\\
&= \sum_{i_1=1}^n\dots\sum_{i_{2d}=1}^n \ov{\F_{i_1\dots i_d i_{d+1}\dots i_{2d}}}x_{i_1}\dots x_{i_d}\ov{x_{i_{d+1}} \dots x_{i_{2d}}}\\
&= \sum_{i_1=1}^n\dots\sum_{i_{2d}=1}^n \F_{i_{d+1}\dots i_{2d}i_1\dots i_d}\ov{x_{i_{d+1}} \dots x_{i_{2d}}}x_{i_1}\dots x_{i_d}\\
&= f_S(\bx),
\end{align*}
implying that $f_S$ is real-valued.

On the other hand, for any real-valued symmetric conjugate form $f_S(\bx)$ in~\eqref{eq:sform}, it follows from Corollary~\ref{thm:condition} that $a_{i_1\dots i_d,j_1\dots j_d}=\ov{a_{j_1\dots j_d,i_1\dots i_d}}$ holds for all the possible $(i_1,\dots,i_d,j_1,\dots,j_d)$. By~\eqref{eq:sinverse}, its tensor representation $\F=\mathbf{S}^{-1}(f_S)$ with
$$
\F_{i_1\dots i_d i_{d+1}\dots i_{2d}}=\frac{a_{i_1\dots i_d ,i_{d+1}\dots i_{2d}}} {|{\Pi}(i_1\dots i_d)| \cdot |{\Pi}(i_{d+1}\dots i_{2d})| }
$$
satisfies the 2nd condition in Definition~\ref{thm:cps}, proving the conjugate partial-symmetricity of $\F$.
\end{proof}

Below is a useful property for conjugate partial-symmetric tensors, in the same vein as Proposition~\ref{thm:polyrealvalued} for the real-valued symmetric conjugate forms.
\begin{proposition} \label{thm:conju-decomp}
  An even order tensor $\F\in\C^{n^{2d}}$ is conjugate partial-symmetric if and only if
  $$
  \F = \sum_{k=1}^m\alpha_k\ov{\HI_k}\otimes \HI_k,
  $$
  where $\HI_k\in\C^{n^d}$ is symmetric and $\alpha_k\in\R$ for all $1\le k\le m$.
\end{proposition}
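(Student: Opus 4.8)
The plan is to prove the two implications separately: the ``if'' part is a direct verification, while the ``only if'' part exploits the fact --- already noted after Definition~\ref{thm:cps} --- that the square matrix flattening of a conjugate partial-symmetric tensor is Hermitian. For the ``if'' part, I would first show that each summand $\ov{\HI_k}\otimes\HI_k$ with $\HI_k\in\C^{n^d}$ symmetric is conjugate partial-symmetric: writing $(\ov{\HI_k}\otimes\HI_k)_{i_1\dots i_{2d}}=\ov{(\HI_k)_{i_1\dots i_d}}\,(\HI_k)_{i_{d+1}\dots i_{2d}}$, condition (i) of Definition~\ref{thm:cps} holds because the symmetry of $\HI_k$ makes each factor invariant under permutations of its own block of indices, and condition (ii) holds because $\ov{(\ov{\HI_k}\otimes\HI_k)_{i_{d+1}\dots i_{2d}i_1\dots i_d}}=(\HI_k)_{i_{d+1}\dots i_{2d}}\ov{(\HI_k)_{i_1\dots i_d}}$, which is the same quantity. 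Since conditions (i) and (ii) are both preserved under real linear combinations of tensors --- the reality of the $\alpha_k$ being exactly what (ii) needs --- the tensor $\sum_{k=1}^m\alpha_k\ov{\HI_k}\otimes\HI_k$ is conjugate partial-symmetric.

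For the ``only if'' part, let $\F\in\C^{n^{2d}}$ be conjugate partial-symmetric and let $M$ be its square matrix flattening (first $d$ modes to rows, last $d$ modes to columns). Condition (ii) gives $M_{IJ}=\ov{M_{JI}}$ whenever the row multi-index $I$ and the column multi-index $J$ are both nondecreasing, and condition (i) propagates this identity across all permutations of either block, so $M$ is Hermitian. Condition (i) also forces the entries within each column of $M$ to be constant across permutation classes of the row index; hence every column of $M$, regarded as a $d$-th order tensor, is symmetric, so the range of the Hermitian matrix $M$ lies in the symmetric subspace and $M$ annihilates its orthogonal complement. Diagonalizing $M$ on the symmetric subspace yields $M=\sum_{k=1}^m\lambda_k\bv_k\bv_k^{\HH}$ with $\lambda_k\in\R$ and each $\bv_k$ corresponding to a symmetric tensor $\G_k\in\C^{n^d}$. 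Reading off entries, $\F_{i_1\dots i_{2d}}=\sum_{k=1}^m\lambda_k(\G_k)_{i_1\dots i_d}\ov{(\G_k)_{i_{d+1}\dots i_{2d}}}$, so with $\HI_k:=\ov{\G_k}$ (again symmetric) and $\alpha_k:=\lambda_k$ we obtain $\F=\sum_{k=1}^m\alpha_k\ov{\HI_k}\otimes\HI_k$.

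The step needing the most care is ensuring that the eigenvectors $\bv_k$ can be taken inside the symmetric subspace, so that the $\HI_k$ are genuinely symmetric rather than merely partial-symmetric; this is exactly where the observation ``the column space of $M$ is symmetric, hence $M$ kills the complementary subspace'' is used, and it is the only nonroutine point. A slicker alternative that avoids coordinates altogether is to invoke Proposition~\ref{thm:conju-mapping} to pass to the real-valued symmetric conjugate form $f_S=\mathbf{S}(\F)$, then Proposition~\ref{thm:polyrealvalued} to write $f_S(\bx)=\sum_{k=1}^m\alpha_k|h_k(\bx)|^2$ with $h_k$ a complex form and $\alpha_k\in\R$, and finally note that if $\HI_k$ is the symmetric tensor representing $h_k$ then $\mathbf{S}(\ov{\HI_k}\otimes\HI_k)(\bx)=|h_k(\bx)|^2$; injectivity of $\mathbf{S}$ on partial-symmetric tensors (Lemma~\ref{thm:tensorS}) then forces $\F=\sum_{k=1}^m\alpha_k\ov{\HI_k}\otimes\HI_k$.
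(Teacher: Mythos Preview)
Your proposal is correct. Interestingly, the ``slicker alternative'' you sketch at the end is precisely the paper's proof: it passes through Proposition~\ref{thm:conju-mapping} and Proposition~\ref{thm:polyrealvalued} to write $\mathbf{S}(\F)=\sum_k\alpha_k|h_k(\bx)|^2$, identifies $|h_k(\bx)|^2$ with $(\ov{\HI_k}\otimes\HI_k)(\ov\bx,\dots,\ov\bx,\bx,\dots,\bx)$, and concludes via the injectivity of $\mathbf{S}$.

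Your primary route --- flattening $\F$ to a Hermitian matrix $M\in\C^{n^d\times n^d}$ and spectrally decomposing it on the symmetric subspace --- is a genuinely different and more self-contained argument. It bypasses the polynomial machinery entirely and gives, as a bonus, an explicit bound $m\le\binom{n+d-1}{d}$ on the number of terms (the dimension of the symmetric subspace), which the paper's proof does not provide. The one delicate step you flag, that the eigenvectors of $M$ may be chosen symmetric, is handled correctly: condition~(i) forces $\mathrm{range}(M)$ into the symmetric subspace, Hermiticity then gives $\ker(M)\supseteq(\mathrm{range}\,M)^\perp$, so $M$ vanishes on the antisymmetric complement and restricts to a Hermitian operator on the symmetric part. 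The paper's approach, by contrast, buys conceptual economy --- it reuses results already in hand --- at the cost of a less explicit decomposition.
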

\begin{proof}
According to Definition~\ref{thm:cps}, it is straightforward to verify that $\sum_{k=1}^m\alpha_k\ov{\HI_k}\otimes \HI_k$ is conjugate partial-symmetric, proving the `if' part of the proposition. Let us now prove the `only if' part.

By Proposition~\ref{thm:conju-mapping}, $\mathbf{S}(\F)$ is a real-valued symmetric conjugate form. Further by Proposition~\ref{thm:polyrealvalued}, $\mathbf{S}(\F)$ can be written as
$$\F(\underbrace{\ov{\bx},\dots,\ov{\bx}}_d, \underbrace{\bx,\dots,\bx}_d)=\sum_{k=1}^m\alpha_k |h_k(\bx)|^2,$$
where $h_k(\bx)$ is a complex form and $\alpha_k\in\R$ for $k=1,\dots,m$. Let $\HI_k\in\C^{n^d}$ be the symmetric complex tensor associated with the complex form $h_k(\bx)$ for $k=1,\dots,m$; i.e.,
$$\HI_k(\underbrace{\bx,\dots,\bx}_d)=h_k(\bx).$$
We have
\begin{equation}\label{eq:HHexpand}
|h_k(\bx)|^2
=\ov{h_k(\bx)} h_k(\bx)
=\ov{\HI_k}(\underbrace{\ov{\bx},\dots,\ov{\bx}}_d) \HI_k(\underbrace{\bx,\dots,\bx}_d)
=(\ov{\HI_k}\otimes \HI_k)(\underbrace{\ov{\bx},\dots,\ov{\bx}}_d,\underbrace{\bx,\dots,\bx}_d).
\end{equation}
Thus, the symmetric conjugate form $\mathbf{S}\left(\sum_{k=1}^m\alpha_k\ov{\HI_k}\otimes \HI_k\right)$ satisfies
\begin{align*}
  \left(\sum_{k=1}^m\alpha_k\ov{\HI_k}\otimes \HI_k\right) (\underbrace{\ov{\bx},\dots,\ov{\bx}}_d,\underbrace{\bx,\dots,\bx}_d)
  &= \sum_{k=1}^m\alpha_k(\ov{\HI_k}\otimes \HI_k)(\underbrace{\ov{\bx},\dots,\ov{\bx}}_d,\underbrace{\bx,\dots,\bx}_d) \\
  &= \sum_{k=1}^m\alpha_k |h_k(\bx)|^2 \\
  &= \F(\underbrace{\ov{\bx},\dots,\ov{\bx}}_d, \underbrace{\bx,\dots,\bx}_d),
\end{align*}
i.e., $\mathbf{S}\left(\sum_{k=1}^m\alpha_k\ov{\HI_k}\otimes \HI_k\right)=\mathbf{S}(\F)$. As $\mathbf{S}$ is bijective, we have $\F=\sum_{k=1}^m\alpha_k\ov{\HI_k}\otimes \HI_k$.
\end{proof}

\subsection{Conjugate super-symmetric tensors} \label{sec:gtensor}

Similar as for real-valued symmetric conjugate forms, we have the following tensor representations for real-valued general conjugate forms.
\begin{definition}\label{thm:css}
An even dimensional tensor $\F\in\C^{(2n)^d}$ is called conjugate super-symmetric if \\
{(i) $\F$ is symmetric, 
and \\
(ii) $\F_{i_1\dots i_d} = \ov{\F_{j_1\dots j_d }}$ holds for all $1\le i_1, \dots, i_d, j_1,\dots, j_d\le 2n$ with $|i_k -j_k| = n$ for $k=1,\dots,d$.}
\end{definition}

We remark that the conjugate super-symmetricity is actually {\em stronger} than the ordinary symmetricity for complex tensors since a second condition in Definition~\ref{thm:css} is required. Actually, this condition is to ensure that the general conjugate form
$$\mathbf{G}(\F)(\bx)=\F\bigg(\underbrace{\binom{\ov\bx}{\bx},\dots,\binom{\ov\bx}{\bx}}_d\bigg)$$
is real-valued. This is because if $|i_k -j_k| = n$ holds for $k=1,\dots,d$, then the monomial with coefficient $\F_{i_1\dots i_d}$ and the monomial with coefficient $\F_{j_1\dots j_d}$ in the above form are actually a conjugate pair by noticing that the position of a conjugate variable $\ov{x_i}$ and that of a usual variable $x_i$ in the vector $\binom{\ov{\bx}}{\bx}$ differs exactly {by} $n$ for every $i$. Under the mapping $\mathbf{G}$ defined in Section~\ref{sec:gform}, it is straightforward to verify the following tensor representations for real-valued general conjugate forms. 

\begin{proposition}\label{thm:gform-rv}
Any $2n$-dimensional $d$-th order conjugate super-symmetric tensor $\F\in\C^{(2n)^d}$ uniquely defines (under $\mathbf{G}$) an $n$-dimensional $d$-th degree real-valued general conjugate form, and vice versa (under $\mathbf{G}^{-1}$).
\end{proposition}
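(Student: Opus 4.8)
The plan is to leverage Lemma~\ref{thm:tensorG}, which already tells us that $\mathbf{G}$ is a bijection between the symmetric tensors in $\C^{(2n)^d}$ and the $n$-dimensional $d$-th degree general conjugate forms; what remains is to check that, under this bijection, the \emph{conjugate super-symmetric} tensors are matched exactly with the \emph{real-valued} general conjugate forms. The whole argument rests on one bookkeeping device: let $\sigma$ be the involution of $\{1,\dots,2n\}$ with $\sigma(i)=i+n$ for $i\le n$ and $\sigma(i)=i-n$ for $i>n$, so that $|i-\sigma(i)|=n$ for every $i$, and set $\by=\binom{\ov\bx}{\bx}\in\C^{2n}$. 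The point is that $\ov{y_i}=y_{\sigma(i)}$ for all $i$, since conjugating the $i$-th entry of $\binom{\ov\bx}{\bx}$ turns $\ov{x_i}$ into $x_i$ (and vice versa), which is precisely the entry sitting $n$ positions away.

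First I would prove the ``only if'' direction. Let $\F\in\C^{(2n)^d}$ be conjugate super-symmetric and write $\mathbf{G}(\F)(\bx)=\sum_{j_1,\dots,j_d=1}^{2n}\F_{j_1\dots j_d}\,y_{j_1}\cdots y_{j_d}$ as in~\eqref{eq:tensor-gform}. Taking complex conjugates and using $\ov{y_j}=y_{\sigma(j)}$ gives $\ov{\mathbf{G}(\F)(\bx)}=\sum_{j_1,\dots,j_d}\ov{\F_{j_1\dots j_d}}\,y_{\sigma(j_1)}\cdots y_{\sigma(j_d)}$. Since $\sigma$ is a bijection of the index set, I can reindex each $j_k\mapsto\sigma(j_k)$ to obtain $\sum_{j_1,\dots,j_d}\ov{\F_{\sigma(j_1)\dots\sigma(j_d)}}\,y_{j_1}\cdots y_{j_d}$; now condition (ii) of Definition~\ref{thm:css} (applicable because $|\sigma(j_k)-j_k|=n$) says $\ov{\F_{\sigma(j_1)\dots\sigma(j_d)}}=\F_{j_1\dots j_d}$, so the sum collapses back to $\mathbf{G}(\F)(\bx)$. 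Hence $\mathbf{G}(\F)$ is real-valued.

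For the ``if'' direction, let $f_G$ be a real-valued general conjugate form and put $\F=\mathbf{G}^{-1}(f_G)$; condition (i) of Definition~\ref{thm:css} holds automatically because $\mathbf{G}^{-1}$ produces a symmetric tensor. Define $\tilde\F$ by $\tilde\F_{i_1\dots i_d}:=\ov{\F_{\sigma(i_1)\dots\sigma(i_d)}}$. Because $\sigma$ acts on index \emph{values} while tensor symmetry permutes index \emph{positions}, $\tilde\F$ is again symmetric. Repeating the computation above verbatim shows $\mathbf{G}(\tilde\F)(\bx)=\ov{\mathbf{G}(\F)(\bx)}=\ov{f_G(\bx)}=f_G(\bx)=\mathbf{G}(\F)(\bx)$, and the injectivity of $\mathbf{G}$ (Lemma~\ref{thm:tensorG}) forces $\tilde\F=\F$, which is exactly condition (ii). Both directions, together with the bijectivity already supplied by Lemma~\ref{thm:tensorG}, yield the claim.

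The only place that needs care — and the natural candidate for the ``hard part'' — is keeping the two layers of indexing straight: the involution $\sigma$ permutes the alphabet $\{1,\dots,2n\}$, whereas tensor symmetry permutes the $d$ slots, and one must check that these two actions commute (so that $\tilde\F$ stays symmetric) and that the reindexing step is a genuine change of summation variable. Alternatively one could argue entirely through the explicit formula~\eqref{eq:tensor-gform-1} for $\mathbf{G}^{-1}$ together with Corollary~\ref{thm:condition}, matching $a_{i_1\dots i_k,j_1\dots j_{d-k}}=\ov{a_{j_1\dots j_{d-k},i_1\dots i_k}}$ against condition (ii) and noting that the $|\Pi(\cdot)|$ denominators are invariant under $\sigma$; but the substitution argument above avoids the sorting/$\arg\max$ bookkeeping of~\eqref{eq:tensor-gform-1} and is, I think, the cleaner route.
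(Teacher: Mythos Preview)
Your proof is correct. The paper itself does not supply a formal proof of this proposition --- it declares the result ``straightforward to verify'' and, in the paragraph just before the statement, only sketches the key observation that the monomial attached to $\F_{i_1\dots i_d}$ and the one attached to $\F_{j_1\dots j_d}$ form a conjugate pair precisely when $|i_k-j_k|=n$ for all $k$, since the positions of $\ov{x_i}$ and $x_i$ in $\binom{\ov\bx}{\bx}$ differ by $n$. Your involution $\sigma$ and the identity $\ov{y_i}=y_{\sigma(i)}$ are exactly the clean formalization of that sketch, and your direct conjugation--reindexing argument (rather than passing through Corollary~\ref{thm:condition}, which you correctly note as an alternative) is a tidy way to carry it out; the check that $\tilde\F$ inherits symmetry --- because $\sigma$ acts on index values while permutations act on slots --- is the only genuinely delicate point, and you handle it.
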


\section{Eigenvalues and eigenvectors of complex tensors}\label{sec:eigenvalue}

As mentioned earlier, Lim~\cite{L05} and Qi~\cite{Q05} independently proposed to systematically study the eigenvalues and eigenvectors for real tensors. 
Subsequently, the topic has attracted much attention due to the potential applications in magnetic resonance imaging, polynomial optimization theory, quantum physics, statistical data analysis, higher order Markov chains, and so on. After that, this study was also extended to complex tensors~\cite{Q07,NQWW07,CS13}
without considering the conjugate variables.
Zhang and Qi in~\cite{ZQ12} proposed the so-called Q-eigenvalues of complex tensors.
\begin{definition}[Zhang and Qi~\cite{ZQ12}]\label{thm:Qeigen}
A scalar $\lambda$ is called a Q-eigenvalue of a symmetric complex tensor $\HI$, if there exists a vector $\bx$ called Q-eigenvector, such that
\begin{equation}\label{eq:Qeigen}
\left\{
\begin{array}{l}
\HI(\bullet,\underbrace{\bx,\dots,\bx}_{d-1})=\lambda\ov\bx\\
\bx^{\HH}\bx=1\\
\lambda \in \R.
\end{array}
\right.
\end{equation}
\end{definition}

Throughout this paper, the notation `$\bullet$' stands for a position left for a vector entry. In Definition~\ref{eq:Qeigen}, as the corresponding complex tensor does not have conjugate-type symmetricity, the Q-eigenvalue does not specialize to the classical eigenvalues of Hermitian matrices. In particular, $\lambda\in\R$ is required in the system~\eqref{eq:Qeigen}. Later on, Ni et al.~\cite{NQB14} defined the notion of unitary symmetric eigenvalue (US-eigenvalue) {and demonstrated a relation with the geometric measure of quantum entanglement}.
\begin{definition}[Ni et al.~\cite{NQB14}]\label{thm:USeigen}
A scalar $\lambda$ is called a US-eigenvalue of a symmetric complex tensor $\HI$, if there exists a vector $\bx$ called US-eigenvector, such that
\begin{equation}\label{eq:USeigen}
\left\{
\begin{array}{l}
\ov\HI(\bullet,\underbrace{\bx,\dots,\bx}_{d-1})=\lambda\ov\bx\\
\HI(\bullet,\underbrace{\ov\bx,\dots,\ov\bx}_{d-1})=\lambda\bx\\
\bx^{\HH}\bx=1.
\end{array}
\right.
\end{equation}
\end{definition}

{
In fact, the Q-eigenvalue and the US-eigenvalue are essentially the same.
\begin{proposition}
  $(\lambda, \bx)$ is a pair of Q-eigenvalue and Q-eigenvector if and only if $(\lambda, \ov{\bx})$ is a pair of US-eigenvalue and US-eigenvector.
\end{proposition}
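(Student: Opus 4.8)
The plan is to unwind both definitions and observe that the defining systems are literally the same set of equations up to relabeling which of the two vectors plays the role of the ``eigenvector''. The whole proposition reduces to a symmetry observation: the US-eigenvalue system~\eqref{eq:USeigen} is palindromic in the pair $(\bx,\ov{\bx})$, whereas the Q-eigenvalue system~\eqref{eq:Qeigen} records only one of its two equations (the other being redundant once $\lambda\in\R$ is imposed and conjugates are taken). So I would not expect any genuine obstacle here; the work is purely bookkeeping about conjugation.

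First I would prove the forward direction. Suppose $(\lambda,\bx)$ is a Q-eigenpair of the symmetric tensor $\HI$, so that $\HI(\bullet,\bx,\dots,\bx)=\lambda\ov{\bx}$, $\bx^{\HH}\bx=1$, and $\lambda\in\R$. Set $\by:=\ov{\bx}$. The normalization is immediate since $\by^{\HH}\by=\bx^{\T}\ov{\bx}=\ov{\bx^{\HH}\bx}=1$. For the two multilinear equations in~\eqref{eq:USeigen} with $\by$ in place of $\bx$: the second required equation is $\HI(\bullet,\ov{\by},\dots,\ov{\by})=\lambda\by$, i.e. $\HI(\bullet,\bx,\dots,\bx)=\lambda\ov{\bx}$, which is exactly the Q-eigenvector equation we started from. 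The first required equation is $\ov{\HI}(\bullet,\by,\dots,\by)=\lambda\ov{\by}$, i.e. $\ov{\HI}(\bullet,\ov{\bx},\dots,\ov{\bx})=\lambda\bx$; this is obtained by taking the entrywise conjugate of $\HI(\bullet,\bx,\dots,\bx)=\lambda\ov{\bx}$ and using $\lambda=\ov{\lambda}$. Hence $(\lambda,\by)=(\lambda,\ov{\bx})$ is a US-eigenpair.

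Then I would do the converse in the same spirit. Suppose $(\lambda,\by)$ is a US-eigenpair, and put $\bx:=\ov{\by}$. Normalization transfers as before. From the first equation of~\eqref{eq:USeigen}, $\ov{\HI}(\bullet,\by,\dots,\by)=\lambda\ov{\by}$; conjugating entrywise gives $\HI(\bullet,\ov{\by},\dots,\ov{\by})=\ov{\lambda}\by$, i.e. $\HI(\bullet,\bx,\dots,\bx)=\ov{\lambda}\,\ov{\bx}$. Comparing with the second equation of~\eqref{eq:USeigen}, $\HI(\bullet,\ov{\by},\dots,\ov{\by})=\lambda\by$, i.e. $\HI(\bullet,\bx,\dots,\bx)=\lambda\ov{\bx}$, we read off that $\lambda\ov{\bx}=\ov{\lambda}\,\ov{\bx}$. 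Since $\by\neq 0$ forces $\bx\neq 0$, this yields $\lambda=\ov{\lambda}$, so $\lambda\in\R$. Feeding $\lambda\in\R$ back into $\HI(\bullet,\bx,\dots,\bx)=\lambda\ov{\bx}$ with $\bx=\ov{\by}$ shows $(\lambda,\bx)=(\lambda,\ov{\by})$ satisfies all three conditions of~\eqref{eq:Qeigen}. This completes the equivalence; the only mildly non-formal point, which I would state explicitly, is that the realness of $\lambda$ is \emph{automatic} on the US side and must be \emph{extracted} rather than assumed, whereas on the Q side it is imposed by fiat — the two conventions are reconciled precisely because $\HI$ carries no conjugate-type symmetry, so the single equation plus the realness constraint encodes exactly the same data as the symmetric pair of equations.
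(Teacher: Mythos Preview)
Your proof is correct and follows essentially the same approach as the paper: both arguments reduce the equivalence to the observation that the two US-equations are conjugates of one another once $\lambda\in\R$, so that the US system collapses to the single Q-equation (with the realness condition) applied to $\ov{\bx}$. The only cosmetic difference is in how $\lambda\in\R$ is extracted on the US side --- the paper contracts each equation with the eigenvector to obtain scalar identities $\ov{\HI}(\bx,\dots,\bx)=\lambda=\HI(\ov{\bx},\dots,\ov{\bx})$ and compares, whereas you conjugate one vector equation and compare it directly with the other --- but this is the same idea executed at the vector versus scalar level.
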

\begin{proof}
  First, Definition~\ref{thm:USeigen} implies that a US-eigenvalue is always real. To see this, pre-multiplying $\bx^{\T}$ to the first equation of~\eqref{eq:USeigen} gives
  $$\ov\HI(\underbrace{\bx,\dots,\bx}_{d})=\lambda\bx^{\T}\ov\bx=\lambda,$$
  and pre-multiplying $\ov\bx^{\T}$ to the second equation of~\eqref{eq:USeigen} yields
  $$\HI(\underbrace{\ov\bx,\dots,\ov\bx}_{d})=\lambda\ov\bx^{\T}\bx=\lambda.$$
  Therefore $\ov\lambda=\lambda$ and so $\lambda\in\R$. This actually implies that the first and second equations of~\eqref{eq:USeigen} are the same by applying the conjugation to the second one. Thus, ~\eqref{eq:USeigen} is equivalent to
    \begin{equation*}
    \left\{
    \begin{array}{l}
    \HI(\bullet,\underbrace{\ov\bx,\dots,\ov\bx}_{d-1})=\lambda\bx\\
    \bx^{\HH}\bx=1\\
    \lambda\in\R.
    \end{array}
    \right.
    \end{equation*}
  The claimed equivalence is obvious by comparing the above system with~\eqref{eq:Qeigen}.
\end{proof}

In terms of eigenvalues, Definitions~\ref{thm:Qeigen} and~\ref{thm:USeigen} are the same.} Now with all the new notions introduced in the previous sections---in particular the bijection between conjugate partial-symmetric tensors and real-valued symmetric conjugate forms, and the bijection between conjugate super-symmetric tensors and real-valued general conjugate forms---we are able to present new definitions and properties of eigenvalues for complex tensors, which are naturally related to that of Hermitian matrices.

\subsection{Definitions and properties of eigenvalues} \label{sec:eigendef}

Let us first introduce two types of eigenvalues for conjugate partial-symmetric tensors and conjugate super-symmetric tensors.
\begin{definition}\label{thm:Ceigen}
$\lambda\in\C$ is called a C-eigenvalue of a conjugate partial-symmetric tensor $\F$, if there exists a vector $\bx \in \C^n$ called C-eigenvector, such that
\begin{equation}\label{eq:Ceigen}
\left\{
\begin{array}{l} \F(\bullet,\underbrace{\ov{\bx},\dots,\ov{\bx}}_{d-1}, \underbrace{\bx,\dots,\bx}_d)=\lambda \bx\\
\bx^{\HH}\bx=1.
 \end{array}\right.
\end{equation}
\end{definition}

\begin{definition}\label{thm:Geigen}
$\lambda\in\C$ is called a G-eigenvalue of a conjugate super-symmetric tensor $\F$, if there exists a vector $\bx \in \C^n$ called G-eigenvector, such that
\begin{equation}\label{eq:Geigen}
\left\{
\begin{array}{l} \F\bigg(\dbinom{\bullet}{\bullet}, \underbrace{\dbinom{\ov\bx}{\bx}, \dots,
\dbinom{\ov\bx}{\bx}}_{d-1} \bigg)=\lambda \dbinom{\bx}{\ov\bx}\\
\bx^{\HH}\bx=1.
 \end{array}\right.
\end{equation}
\end{definition}

In fact, the two types of eigenvalues defined above are always real, although they are defined in the complex domain. This property generalizes the well-known property of Hermitian matrices. In particular, Definition~\ref{thm:Ceigen} includes eigenvalues of Hermitian matrices as a special case when $d=1$.
\begin{proposition}\label{thm:eigen-real}
{Every} $C$-eigenvalue of a conjugate partial-symmetric tensor is always real; so is {every} $G$-eigenvalue of a conjugate super-symmetric tensor.
\end{proposition}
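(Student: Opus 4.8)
The plan is to prove both statements in the same way: by pre-multiplying the eigen-equation with an appropriate vector to turn it into a scalar identity involving a value of the associated real-valued conjugate form, and then exploiting that this form is real-valued.

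First I would treat the C-eigenvalue case. Suppose $(\lambda,\bx)$ satisfies~\eqref{eq:Ceigen} for a conjugate partial-symmetric tensor $\F\in\C^{n^{2d}}$. The key move is to contract the free slot `$\bullet$' in the first equation of~\eqref{eq:Ceigen} with $\ov\bx$. On the left this produces
$$
\F(\underbrace{\ov{\bx},\dots,\ov{\bx}}_{d},\underbrace{\bx,\dots,\bx}_{d}) = \mathbf{S}(\F)(\bx) = f_S(\bx),
$$
the value of the symmetric conjugate form associated with $\F$. On the right, contracting $\lambda\bx$ with $\ov\bx$ gives $\lambda\,\bx^{\HH}\bx = \lambda$, using the normalization $\bx^{\HH}\bx=1$. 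Hence $\lambda = f_S(\bx)$. Since $\F$ is conjugate partial-symmetric, Proposition~\ref{thm:conju-mapping} tells us that $f_S = \mathbf{S}(\F)$ is a real-valued symmetric conjugate form, so $f_S(\bx)\in\R$ and therefore $\lambda\in\R$.

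The G-eigenvalue case is entirely parallel. Suppose $(\lambda,\bx)$ satisfies~\eqref{eq:Geigen} for a conjugate super-symmetric tensor $\F\in\C^{(2n)^d}$. I would contract the free slot $\binom{\bullet}{\bullet}$ with the vector $\binom{\ov\bx}{\bx}$. On the left this yields
$$
\F\bigg(\underbrace{\dbinom{\ov\bx}{\bx},\dots,\dbinom{\ov\bx}{\bx}}_{d}\bigg) = \mathbf{G}(\F)(\bx) = f_G(\bx),
$$
the value of the general conjugate form associated with $\F$ via~\eqref{eq:tensor-gform}. On the right, the inner product of $\lambda\binom{\bx}{\ov\bx}$ with $\binom{\ov\bx}{\bx}$ equals $\lambda(\bx^{\T}\ov\bx + \ov\bx^{\T}\bx) = 2\lambda\,\bx^{\HH}\bx = 2\lambda$ (here one must be slightly careful: the contraction is the bilinear pairing, not the Hermitian one, but $\bx^{\T}\ov\bx = \ov\bx^{\T}\bx = \bx^{\HH}\bx = 1$). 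Thus $\lambda = \tfrac12 f_G(\bx)$, and since $\F$ is conjugate super-symmetric, Proposition~\ref{thm:gform-rv} guarantees $f_G$ is real-valued, so $\lambda\in\R$.

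The argument is short and the only point that needs care — the main (minor) obstacle — is bookkeeping the contractions correctly: making sure that contracting the distinguished slot of the tensor with the right conjugated/unconjugated copy of $\bx$ reproduces exactly the form $\mathbf{S}(\F)(\bx)$ or $\mathbf{G}(\F)(\bx)$ as defined in Section~\ref{sec:preparation}, and keeping straight that the tensor-vector contraction on the right-hand side uses the plain (bilinear) pairing so that the normalization $\bx^{\HH}\bx=1$ applies in the form $\bx^{\T}\ov\bx=1$. Once these identifications are made, realness is immediate from Propositions~\ref{thm:conju-mapping} and~\ref{thm:gform-rv}.
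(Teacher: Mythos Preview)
Your proof is correct and follows essentially the same approach as the paper: contract the eigen-equation with $\ov\bx$ (respectively $\binom{\ov\bx}{\bx}$) to obtain $\lambda = f_S(\bx)$ (respectively $2\lambda = f_G(\bx)$), and then invoke the real-valuedness of the associated conjugate form. The only cosmetic difference is that you cite Propositions~\ref{thm:conju-mapping} and~\ref{thm:gform-rv} explicitly, whereas the paper simply asserts that conjugate partial-/super-symmetricity makes the form real-valued.
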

\begin{proof} Suppose $(\lambda,\bx)$ is a C-eigenvalue and C-eigenvector pair of a conjugate partial-symmetric tensor $\F$. Multiplying ${\ov\bx}^{\T}$ on both sides of the first equation in~\eqref{eq:Ceigen}, we get
$$
\F(\underbrace{\ov{\bx},\dots,\ov{\bx}}_d,\underbrace{\bx,\dots,\bx}_d) =\lambda \ov\bx^{\T}\bx=\lambda.
$$
As $\F$ is conjugate partial-symmetric, the left hand side of the above equation is real-valued, and so is $\lambda$.

Next, suppose $(\lambda,\bx)$ is a G-eigenvalue and G-eigenvector pair of a conjugate super-symmetric tensor $\F$. Multiplying $\binom{\ov\bx}{\bx}^{\T}$ on both sides of the first equation in~\eqref{eq:Geigen} yields
$$
\F\bigg(\underbrace{\dbinom{\ov\bx}{\bx}, \dots, \dbinom{\ov\bx}{\bx}}_d \bigg)=\lambda {\binom{\ov\bx}{\bx}}^{\T} \dbinom{\bx}{\ov\bx} = 2 \lambda\bx^{\HH}\bx = 2\lambda.
$$
As $\F$ is conjugate super-symmetric, the left hand side of the above equation is real-valued, and so is $\lambda$.
\end{proof}

As a consequence of Proposition~\ref{thm:eigen-real},
one can similarly define the C-eigenvalue $\lambda\in\R$ and its corresponding C-eigenvector $\bx\in\C^n$ for a conjugate partial-symmetric tensor $\F$ equivalently as follows.
\begin{proposition}\label{thm:Ceigen2}
$\lambda\in\C$ is a C-eigenvalue of a conjugate partial-symmetric tensor $\F$, if and only if there exists a vector $\bx\in\C^n$, such that
\begin{equation}\label{eq:Ceigen2}
\left\{
\begin{array}{l} \F(\underbrace{\ov{\bx},\dots,\ov{\bx}}_{d}, \underbrace{\bx,\dots,\bx}_{d-1},\bullet) =\lambda \ov\bx\\
\bx^{\HH}\bx=1.
 \end{array}\right.
\end{equation}
\end{proposition}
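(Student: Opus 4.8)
The plan is to show that, under conjugate partial-symmetricity, the system~\eqref{eq:Ceigen2} is nothing but the componentwise complex conjugate of the defining system~\eqref{eq:Ceigen}. The two facts I will lean on are the two defining properties of a conjugate partial-symmetric tensor in Definition~\ref{thm:cps} — property (i), the block-wise permutation symmetry, and property (ii), $\F_{i_1\dots i_d i_{d+1}\dots i_{2d}}=\ov{\F_{i_{d+1}\dots i_{2d}i_1\dots i_d}}$ — together with Proposition~\ref{thm:eigen-real}, which tells us that any C-eigenvalue is real. Since the normalization $\bx^{\HH}\bx=1$ is literally the same constraint in both systems, only the eigen-equation needs to be tracked.

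For the ``only if'' direction, suppose $(\lambda,\bx)$ satisfies~\eqref{eq:Ceigen}; by Proposition~\ref{thm:eigen-real}, $\lambda\in\R$. I would write out the $i$-th component of $\F(\bullet,\ov\bx^{\,d-1},\bx^d)=\lambda\bx$ as a sum over indices and take complex conjugates of both sides. On the right, $\ov{\lambda x_i}=\lambda\ov{x_i}$. On the left, each coefficient $\F_{i\,i_2\dots i_d\,j_1\dots j_d}$ turns into $\ov{\F_{i\,i_2\dots i_d\,j_1\dots j_d}}=\F_{j_1\dots j_d\,i\,i_2\dots i_d}$ by property (ii), while each $\ov{x_{i_\ell}}$ becomes $x_{i_\ell}$ and each $x_{j_\ell}$ becomes $\ov{x_{j_\ell}}$. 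Relabelling the summation indices, the conjugated left-hand side is exactly the $i$-th component of $\F(\ov\bx,\dots,\ov\bx,\bx,\dots,\bx,\bullet)$ with $d$ copies of $\ov\bx$ in the first block and $d-1$ copies of $\bx$ in the second block, the free slot sitting in the last mode — here property (i) is used to relocate the free index to the last position within the second block. This is precisely the eigen-equation of~\eqref{eq:Ceigen2}.

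For the ``if'' direction, suppose $\bx$ with $\bx^{\HH}\bx=1$ satisfies~\eqref{eq:Ceigen2}. First I would contract the free slot with $\bx$, obtaining $\F(\ov\bx^{\,d},\bx^{\,d})=\lambda\,\bx^{\HH}\bx=\lambda$; the left-hand side is real because $\F$ is conjugate partial-symmetric (equivalently, $\mathbf{S}(\F)$ is a real-valued symmetric conjugate form, cf.\ Proposition~\ref{thm:conju-mapping}), so $\lambda\in\R$. Then I run the conjugation computation above in reverse: conjugate the $i$-th component of the eigen-equation in~\eqref{eq:Ceigen2}, apply property (ii) to rewrite the conjugated coefficients and property (i) to move the free index back to the first mode, and recover $\F(\bullet,\ov\bx^{\,d-1},\bx^d)=\lambda\bx$, i.e.~\eqref{eq:Ceigen}.

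The only step requiring genuine care — and the one most prone to slips — is the index bookkeeping: keeping straight which modes lie in the conjugate block versus the regular block, which arguments acquire a bar under conjugation, and verifying that the relabelling after invoking property (ii) lands the free slot in the intended mode. Nothing here is deep; it is the same style of computation already performed in the proof of Proposition~\ref{thm:conju-mapping}, and I would organize the write-up along those lines.
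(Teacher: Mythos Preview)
Your proposal is correct and is exactly the argument the paper has in mind: the paper does not spell out a proof but presents the proposition as an immediate consequence of Proposition~\ref{thm:eigen-real}, which is precisely what you do by conjugating the defining eigen-equation~\eqref{eq:Ceigen} and invoking properties (i) and (ii) of Definition~\ref{thm:cps}.
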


One important property of the Z-eigenvalues for real symmetric tensors is that they can be fully characterized by the KKT solutions of a certain optimization problem~\cite{L05,Q05}. At a first glance, this property may not hold for C-eigenvalues and G-eigenvalues since the real-valued complex functions are not analytic. Therefore, direct extension of the KKT condition of an optimization problem with such objective function may not be valid. However, this class of functions is indeed analytic if we treat the complex variables and their conjugates as a whole due to the so-called Wirtinger calculus~\cite{R91} developed in the early 20th century. In the optimization context, without noticing the Wirtinger calculus, Brandwood~\cite{B83} first proposed the notion of complex gradient. In particular, the gradient of a real-valued complex function can be taken as $\left(\frac{\partial}{\partial \bx}, \frac{\partial}{\partial \ov\bx}\right)$. Interested readers are referred to~\cite{SBD12} for more discussions on the Wirtinger calculus in optimization with complex variables.

With the help of Wirtinger calculus, we are able to characterize C-eigenvalues and C-eigenvectors in terms of the KKT solutions. Therefore many optimization techniques can be applied to find the C-eigenvalues/eigenvectors for a conjugate partial-symmetric tensor.
\begin{proposition}\label{prop:C-eigen}
$\bx\in\C^n$ is a C-eigenvector associated with a C-eigenvalue $\lambda\in\R$ for a conjugate partial-symmetric tensor $\F$ if and only if $\bx$ is a KKT point of the optimization problem
$$
\max_{\bx^{\HH}\bx=1}\F(\underbrace{\ov\bx,\dots,\ov\bx}_d, \underbrace{\bx,\dots,\bx}_d)
$$
with Lagrange multiplier being $d\lambda$ and the corresponding objective value being $\lambda$.
\end{proposition}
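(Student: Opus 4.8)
\emph{Proof strategy.} The plan is to view the constrained maximization as a smooth problem in the formally independent pair $(\bx,\ov\bx)$ via Wirtinger calculus, write down the Lagrangian stationarity (KKT) conditions, and then recognize them --- after differentiating the degree-$2d$ conjugate form --- as precisely system~\eqref{eq:Ceigen} with Lagrange multiplier $d\lambda$. First I would record the setup: by Proposition~\ref{thm:conju-mapping} the objective $f_S(\bx):=\F(\underbrace{\ov\bx,\dots,\ov\bx}_d,\underbrace{\bx,\dots,\bx}_d)$ is a real-valued symmetric conjugate form, and the constraint function $g(\bx):=\bx^{\HH}\bx-1$ is real-valued as well. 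Hence, treating $\bx$ and $\ov\bx$ as independent variables in the sense of Wirtinger calculus (see~\cite{R91,B83,SBD12}), both $f_S$ and $g$ are smooth, and the KKT conditions of $\max\{f_S(\bx):g(\bx)=0\}$ read: there is a multiplier $\mu$ with
$$
\frac{\partial}{\partial\ov\bx}\bigl(f_S(\bx)-\mu\,g(\bx)\bigr)=0,\qquad \bx^{\HH}\bx=1,
$$
the companion equation $\frac{\partial}{\partial\bx}(f_S-\mu g)=0$ being merely the complex conjugate of the first since $f_S$ and $g$ are real-valued (so that $\mu\in\R$). Justifying carefully that these formal Wirtinger derivatives genuinely encode first-order optimality for a non-holomorphic objective under a real constraint --- i.e.\ pinning down exactly what ``KKT point'' means in this complex setting --- is the step I expect to require the most care; the rest is bookkeeping.

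Next I would carry out the two differentiations. Since $\partial(\bx^{\HH}\bx)/\partial\ov\bx=\bx$, the constraint contributes $-\mu\bx$. For the objective, writing $f_S(\bx)=\sum_{i_1,\dots,i_{2d}}\F_{i_1\dots i_d i_{d+1}\dots i_{2d}}\ov{x_{i_1}\cdots x_{i_d}}\,x_{i_{d+1}}\cdots x_{i_{2d}}$ and differentiating in $\ov{x_j}$, exactly the $d$ conjugate slots contribute a term, and by the symmetry of $\F$ in its first $d$ indices (condition (i) of Definition~\ref{thm:cps}) these $d$ terms coincide, so
$$
\frac{\partial f_S}{\partial\ov\bx}=d\,\F\bigl(\bullet,\underbrace{\ov\bx,\dots,\ov\bx}_{d-1},\underbrace{\bx,\dots,\bx}_{d}\bigr).
$$
Thus stationarity becomes $d\,\F(\bullet,\ov\bx,\dots,\ov\bx,\bx,\dots,\bx)=\mu\bx$, that is $\F(\bullet,\ov\bx,\dots,\ov\bx,\bx,\dots,\bx)=(\mu/d)\bx$. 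Putting $\lambda:=\mu/d$ and restoring the feasibility $\bx^{\HH}\bx=1$ yields exactly~\eqref{eq:Ceigen}; conversely any C-eigenpair $(\lambda,\bx)$ is a KKT point with multiplier $\mu=d\lambda$. This establishes the equivalence together with the ``Lagrange multiplier $d\lambda$'' assertion.

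Finally I would evaluate the objective at such a point. Inserting $\ov\bx$ into the open slot of the eigen-equation $\F(\bullet,\ov\bx,\dots,\ov\bx,\bx,\dots,\bx)=\lambda\bx$ gives $f_S(\bx)=\F(\ov\bx,\dots,\ov\bx,\bx,\dots,\bx)=\lambda\,\ov\bx^{\T}\bx=\lambda\,\bx^{\HH}\bx=\lambda$ by the normalization --- the same computation already used in the proof of Proposition~\ref{thm:eigen-real} --- so the objective value is $\lambda$, as claimed.
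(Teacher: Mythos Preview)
Your proposal is correct and follows essentially the same route as the paper: compute the Wirtinger gradient of the real-valued objective, use partial-symmetry to collapse the $d$ identical terms into $d\cdot\F(\bullet,\ov\bx,\dots,\ov\bx,\bx,\dots,\bx)$, and match the resulting stationarity system against~\eqref{eq:Ceigen}. The only cosmetic differences are that the paper writes out both the $\partial/\partial\ov\bx$ and $\partial/\partial\bx$ equations explicitly (rather than noting one is the conjugate of the other) and leaves the objective-value computation implicit, whereas you spell it out.
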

\begin{proof}
{By the multilinearity of $\F$, the gradient on $\bx$ of the real-valued symmetric conjugate form associated with $\F$ is given by
$$
\F(\underbrace{\ov{\bx},\dots,\ov{\bx}}_{d},\bullet, \underbrace{\bx,\dots,\bx}_{d-1}) + \cdots + \F(\underbrace{\ov{\bx},\dots,\ov{\bx}}_{d},\underbrace{\bx,\dots,\bx}_{d-1},\bullet) = d \cdot \F(\underbrace{\ov{\bx},\dots,\ov{\bx}}_{d},\bullet, \underbrace{\bx,\dots,\bx}_{d-1}),
$$
where the equality is due to the partial-symmetry of $\F$.}

Denote $\mu$ to be the Lagrange multiplier associated with the constraint $\bx^{\HH}\bx=1$. 
The KKT condition gives rise to the equations
$$
\left\{
\begin{array}{l} d \cdot \F(\bullet, \underbrace{\ov{\bx},\dots,\ov{\bx}}_{d-1},\underbrace{\bx,\dots,\bx}_d)- \mu \bx=0\\
d \cdot \F(\underbrace{\ov{\bx},\dots,\ov{\bx}}_{d},\bullet, \underbrace{\bx,\dots,\bx}_{d-1})- \mu \ov{\bx}=0\\
\bx^{\HH}\bx=1.
\end{array}
\right.
$$
The conclusion follows immediately by comparing the above with~\eqref{eq:Ceigen} and~\eqref{eq:Ceigen2}.
\end{proof}

Similarly, we have the following characterization. 
\begin{proposition}\label{prop:G-eigen}
$\bx\in\C^n$ is a G-eigenvector associated with a G-eigenvalue $\lambda\in\R$ for a conjugate super-symmetric tensor $\F$ if and only if $\bx$ is a KKT point of the optimization problem
$$
\max_{\bx^{\HH}\bx=1} \F\bigg( \underbrace{\dbinom{\ov\bx}{\bx}, \dots, \dbinom{\ov\bx}{\bx}}_d \bigg)
$$
with Lagrange multiplier being $d\lambda$ and the corresponding objective value being $\lambda$.
\end{proposition}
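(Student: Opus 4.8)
\emph{Proof proposal.} The plan is to follow the same route as in the proof of Proposition~\ref{prop:C-eigen}, replacing the partial-symmetry there by the symmetry and conjugate super-symmetry of $\F$. Set $g(\bx):=\F\big(\binom{\ov\bx}{\bx},\dots,\binom{\ov\bx}{\bx}\big)$. By Proposition~\ref{thm:gform-rv} this is a real-valued general conjugate form, so Wirtinger calculus applies: treating $\bx$ and $\ov\bx$ as independent variables, the gradient of $g$ is the pair $\big(\frac{\partial g}{\partial\bx},\frac{\partial g}{\partial\ov\bx}\big)$, and since both $g$ and the constraint $\bx^{\HH}\bx-1$ are real-valued, the KKT system of $\max\{g(\bx):\bx^{\HH}\bx=1\}$ reads $\frac{\partial g}{\partial\bx}=\mu\ov\bx$, $\frac{\partial g}{\partial\ov\bx}=\mu\bx$, $\bx^{\HH}\bx=1$ for a multiplier $\mu$.

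First I would compute the Wirtinger gradient. Write $\bz:=\binom{\ov\bx}{\bx}\in\C^{2n}$, so $g(\bx)=\F(\bz,\dots,\bz)$. By the symmetry of $\F$, $\nabla_{\bz}\F(\bz,\dots,\bz)=d\cdot\F(\bullet,\bz,\dots,\bz)$. Since the lower $n$ entries of $\bz$ equal $\bx$ and the upper $n$ entries equal $\ov\bx$, one has $\partial z_m/\partial x_i=\delta_{m,n+i}$ and $\partial z_m/\partial\ov x_i=\delta_{m,i}$, so the chain rule gives $\frac{\partial g}{\partial\bx}=d\cdot(\text{lower block of }\F(\binom{\bullet}{\bullet},\bz,\dots,\bz))$ and $\frac{\partial g}{\partial\ov\bx}=d\cdot(\text{upper block of }\F(\binom{\bullet}{\bullet},\bz,\dots,\bz))$. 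Denote $\F\big(\binom{\bullet}{\bullet},\bz,\dots,\bz\big)=\binom{\boldsymbol{c}}{\boldsymbol{d}}$ with $\boldsymbol{c},\boldsymbol{d}\in\C^n$. Expanding componentwise and using condition (ii) of Definition~\ref{thm:css} together with the fact that $z_{j}=\ov{z_{i}}$ whenever $|i-j|=n$, I would verify that $\boldsymbol{d}=\ov{\boldsymbol{c}}$; this is the exact analogue, for conjugate super-symmetric tensors, of the partial-symmetry identity used in Proposition~\ref{prop:C-eigen}.

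With this in hand the KKT equations become $d\boldsymbol{d}=\mu\ov\bx$ and $d\boldsymbol{c}=\mu\bx$, i.e.\ $d\cdot\F\big(\binom{\bullet}{\bullet},\binom{\ov\bx}{\bx},\dots,\binom{\ov\bx}{\bx}\big)=\mu\binom{\bx}{\ov\bx}$. Conjugating $d\boldsymbol{c}=\mu\bx$ and invoking $\boldsymbol{d}=\ov{\boldsymbol{c}}$ gives $d\boldsymbol{d}=\ov\mu\,\ov\bx$, whence $\mu=\ov\mu\in\R$; setting $\lambda:=\mu/d$ yields exactly system~\eqref{eq:Geigen}. Conversely, if $(\lambda,\bx)$ solves~\eqref{eq:Geigen} then stacking the two $n$-dimensional blocks of that vector identity recovers the KKT system with multiplier $\mu=d\lambda$, so $\bx$ is a KKT point. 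The corresponding objective value is then read off by left-multiplying the block identity by ${\binom{\ov\bx}{\bx}}^{\T}$, exactly as in the proof of Proposition~\ref{thm:eigen-real}.

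The main obstacle is the identity $\boldsymbol{d}=\ov{\boldsymbol{c}}$: one must carefully pair each index $i_k\le n$ of a monomial with its shifted partner $i_k+n$, note that $\F_{\dots}$ and $\ov{\F_{\dots}}$ are interchanged under this shift by Definition~\ref{thm:css}(ii), and that $z_{i_k+n}=\ov{z_{i_k}}$ under the substitution $\bz=\binom{\ov\bx}{\bx}$, so that conjugating $\boldsymbol{c}$ reindexes precisely to $\boldsymbol{d}$. A secondary point to handle with a little care is the Wirtinger chain rule through the stacking map $\bx\mapsto\binom{\ov\bx}{\bx}$, namely that the $2n$-dimensional gradient of $\F(\bz,\dots,\bz)$ in $\bz$ splits into the $\bx$- and $\ov\bx$-parts exactly as indicated. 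Everything else is a mechanical transcription of the argument for Proposition~\ref{prop:C-eigen}.
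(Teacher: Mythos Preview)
Your approach is correct and is precisely what the paper intends: the paper gives no separate proof of this proposition, writing only ``Similarly, we have the following characterization'' after proving Proposition~\ref{prop:C-eigen}, and your argument is the natural transcription of that proof to the conjugate super-symmetric setting. One caveat worth flagging: when you read off the objective value by left-multiplying~\eqref{eq:Geigen} by $\binom{\ov\bx}{\bx}^{\T}$ as in the proof of Proposition~\ref{thm:eigen-real}, you obtain $\binom{\ov\bx}{\bx}^{\T}\binom{\bx}{\ov\bx}=2\bx^{\HH}\bx=2$, hence objective value $2\lambda$ rather than $\lambda$; this appears to be a slip in the statement of the proposition, not a flaw in your argument.
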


\subsection{Eigenvalues of complex tensors and their relations}

Although the definitions of the C-eigenvalue, the G-eigenvalue, and the previously defined Q-eigenvalue and the US-eigenvalue involve different tensor spaces, they are indeed closely related. Our main result in this section essentially states that the Q-eigenvalue and the US-eigenvalue are special cases of the C-eigenvalue, and the C-eigenvalue is a special case of the G-eigenvalue.

\begin{theorem}\label{thm:q-c-eigen}
Denote $\HI\in\C^{n^d}$ to be a complex tensor and define $\F=\ov\HI\otimes\HI\in\C^{n^{2d}}$. It holds that\\
(i) $\HI $ is symmetric if and only if $\F$ is conjugate partial-symmetric;\\
(ii) If $\HI$ is symmetric, then all the C-eigenvalues of $\F$ are nonnegative;\\
(iii) If $\HI$ is symmetric, then $\lambda^2$ is a C-eigenvalue of $\F$ if and only if $\lambda$ is a Q-eigenvalue (or a US-eigenvalue) of $\HI$.
\end{theorem}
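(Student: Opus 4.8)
The plan is to verify the three statements in turn, using the bijections and the real-valuedness results established earlier. For part (i), I would write out the components of $\F = \ov{\HI}\otimes\HI$ explicitly, namely $\F_{i_1\dots i_d i_{d+1}\dots i_{2d}} = \ov{\HI_{i_1\dots i_d}}\,\HI_{i_{d+1}\dots i_{2d}}$. If $\HI$ is symmetric, then permuting the first $d$ indices among themselves leaves $\ov{\HI_{i_1\dots i_d}}$ unchanged and permuting the last $d$ indices leaves $\HI_{i_{d+1}\dots i_{2d}}$ unchanged, giving condition (i) of Definition~\ref{thm:cps}; and swapping the first block with the last block conjugates the product, giving condition (ii). For the converse, if $\F$ is conjugate partial-symmetric, I would use that the square matrix flattening of $\F$ equals $\ov{\bh}\,\bh^{\HH}$ where $\bh = \vct\,\HI$ (viewing $\HI$ as a vector in $\C^{n^d}$ via its flattening), a rank-one Hermitian matrix; conjugate partial-symmetricity forces this flattening to be Hermitian, which it already is, so the content is in condition (i): partial-symmetricity of the entries of $\ov{\bh}\,\bh^{\HH}$ in each block forces the entries of $\bh$ (equivalently $\HI$) to be invariant under permutation of indices, i.e.\ $\HI$ symmetric. (One has to be slightly careful here since the rank-one structure means symmetry of $\F$ need not literally force symmetry of $\HI$ entrywise unless $\HI\neq 0$; the $\HI = 0$ case is trivial, and otherwise picking a nonzero entry $\HI_{j_1\dots j_d}$ lets one solve for all other entries and conclude symmetry.)

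For part (ii), I would compute, for any C-eigenvalue $\lambda$ with unit C-eigenvector $\bx$, the identity obtained in the proof of Proposition~\ref{thm:eigen-real}: multiplying the eigen-equation by $\ov\bx^{\T}$ gives
$$
\lambda = \F(\underbrace{\ov\bx,\dots,\ov\bx}_d,\underbrace{\bx,\dots,\bx}_d) = (\ov\HI\otimes\HI)(\underbrace{\ov\bx,\dots,\ov\bx}_d,\underbrace{\bx,\dots,\bx}_d) = \ov{\HI(\bx,\dots,\bx)}\cdot\HI(\bx,\dots,\bx) = |\HI(\bx,\dots,\bx)|^2 \ge 0,
$$
using the factorization~\eqref{eq:HHexpand}. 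So every C-eigenvalue of $\F$ is nonnegative, and in particular writing it as $\lambda^2$ with $\lambda\ge 0$ is legitimate.

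For part (iii), the core computation is to expand the C-eigen-equation $\F(\bullet,\ov\bx,\dots,\ov\bx,\bx,\dots,\bx) = \lambda^2\bx$ using $\F = \ov\HI\otimes\HI$. The left-hand side, by multilinearity of the tensor contraction and the tensor-product structure, factors as $\ov\HI(\bullet,\underbrace{\ov\bx,\dots,\ov\bx}_{d-1})\cdot\HI(\underbrace{\bx,\dots,\bx}_{d})$; here the first $d$ slots of $\F$ receive one $\bullet$ and $(d-1)$ copies of $\ov\bx$, contracting against the $\ov\HI$ factor, and the last $d$ slots receive $d$ copies of $\bx$, contracting against the $\HI$ factor to produce the scalar $\HI(\bx,\dots,\bx)$. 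Writing $\mu := \HI(\bx,\dots,\bx)\in\C$, the equation becomes $\mu\cdot\ov\HI(\bullet,\ov\bx,\dots,\ov\bx) = \lambda^2\bx$. Taking conjugates, $\ov\mu\cdot\HI(\bullet,\bx,\dots,\bx) = \lambda^2\ov\bx$, i.e.\ $\HI(\bullet,\bx,\dots,\bx) = (\lambda^2/\ov\mu)\ov\bx$ when $\mu\ne 0$. Comparing with~\eqref{eq:Qeigen}, this says $(\lambda^2/\ov\mu,\bx)$ is a Q-eigenpair provided $\lambda^2/\ov\mu$ is real; and contracting that relation with $\bx^{\T}$ gives $\mu = (\lambda^2/\ov\mu)\ov\bx^{\T}\bx = \lambda^2/\ov\mu$, so $|\mu|^2 = \lambda^2$ and $\lambda^2/\ov\mu = \mu$, which is consistent with part (ii)'s identity $\lambda^2 = |\mu|^2$ and shows the Q-eigenvalue produced is exactly $\mu$ — real iff $\mu\in\R$ — so some care is needed about signs. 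Cleaner: set the Q-eigenvalue to be $\lambda$ itself (real, $\ge 0$ or of either sign depending on convention) and check directly that if $(\lambda,\bx)$ is a Q-eigenpair of $\HI$ then $\HI(\bx,\dots,\bx) = \lambda$ (contract with $\bx^{\T}$), hence $\F(\bullet,\ov\bx,\dots,\ov\bx,\bx,\dots,\bx) = \lambda\cdot\ov\HI(\bullet,\ov\bx,\dots,\ov\bx) = \lambda\cdot\ov{\lambda\ov\bx} = \lambda^2\bx$, giving the C-eigenpair $(\lambda^2,\bx)$; conversely run the computation above to recover a Q-eigenpair, absorbing any phase ambiguity in $\mu$ into $\bx$ (replace $\bx$ by $e^{\ii\phi}\bx$, which is still a unit vector and rescales $\mu$ by $e^{\ii d\phi}$, so one can normalize $\mu$ to be real and nonnegative, whence $\mu = \lambda$).

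The main obstacle I anticipate is precisely this phase/normalization bookkeeping in part (iii): the eigen-equations are not phase-invariant in the naive way (replacing $\bx$ by $e^{\ii\phi}\bx$ scales $\HI(\bx,\dots,\bx)$ by $e^{\ii d\phi}$ but scales $\ov\HI(\bullet,\ov\bx,\dots,\ov\bx)$ by $e^{-\ii(d-1)\phi}$, so the product scales by $e^{\ii\phi}$, matching the right side), so one must check that the correspondence $\lambda\leftrightarrow\lambda^2$ is well-posed and that the reality of the Q-eigenvalue is genuinely equivalent to, not merely implied by, the C-eigenvalue statement. Handling the degenerate case $\HI(\bx,\dots,\bx) = 0$ (where $\lambda = 0$ and $\bx$ is in the kernel of the relevant contraction) separately will also be needed but is routine. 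Everything else is a direct unwinding of definitions plus the already-proven Proposition~\ref{thm:eigen-real} and the factorization~\eqref{eq:HHexpand}.
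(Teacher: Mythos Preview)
Your proposal is correct and follows essentially the same route as the paper: part (ii) is identical, part (iii) uses the same factorization $\F(\bullet,\ov\bx,\dots,\ov\bx,\bx,\dots,\bx)=\ov\HI(\bullet,\ov\bx,\dots,\ov\bx)\cdot\HI(\bx,\dots,\bx)$ and the same phase-rotation trick (the paper writes it explicitly as $\by=\bx e^{-\ii\theta/d}$ where $\HI(\bx,\dots,\bx)=\lambda e^{\ii\theta}$, which is your ``absorb the phase into $\bx$''), and part (i) is treated more carefully by you than by the paper, which simply asserts it follows from Definition~\ref{thm:cps}. Your anticipated obstacle---the phase bookkeeping and the degenerate $\mu=0$ case---is exactly what the paper handles, and your outline already contains the right resolution.
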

\begin{proof}
(i) This equivalence can be easily verified by the definition of conjugate partial-symmetricity (Definition~\ref{thm:cps}).

(ii) Let $\bx\in\C^n$ be a C-eigenvector associated with a C-eigenvalue $\lambda\in\R$ of $\F$. By multiplying $\bx$ on both sides of the first equation in~\eqref{eq:Ceigen}, we obtain
\begin{eqnarray*}
\lambda &=& \F(\underbrace{\ov\bx,\dots,\ov\bx}_d,\underbrace{\bx,\dots,\bx}_d)
= (\ov\HI\otimes\HI)(\underbrace{\ov\bx,\dots,\ov\bx}_d,\underbrace{\bx,\dots,\bx}_d)
= \ov\HI(\underbrace{\ov\bx,\dots,\ov\bx}_d)\cdot\HI(\underbrace{\bx,\dots,\bx}_d) \\
&=& |\HI(\underbrace{\bx,\dots,\bx}_d)|^2 \ge 0.
\end{eqnarray*}

(iii) Since the Q-eigenvalue is the same as the US-eigenvalue, we only prove the former case. Suppose $\bx\in\C^n$ is a Q-eigenvector associated with a Q-eigenvalue $\lambda\in\R$ of $\HI$. By~\eqref{eq:Qeigen} we have $\bx^{\HH}\bx=1$ and $\HI(\bullet,\bx,\dots,\bx)=\lambda\ov\bx$, and so
$\HI(\underbrace{\bx,\dots,\bx}_d)=\lambda\bx^{\T}\ov\bx=\lambda.$
By the similar derivation in the proof of (ii), we get
\begin{align*}
\F(\bullet,\underbrace{\ov\bx,\dots,\ov\bx}_{d-1},\underbrace{\bx,\dots,\bx}_d) &= \ov\HI(\bullet,\underbrace{\ov\bx,\dots,\ov\bx}_{d-1})\cdot\HI(\underbrace{\bx,\dots,\bx}_d) \\
&=\ov{\HI(\bullet,\underbrace{\bx,\dots,\bx}_{d-1})}\cdot\lambda
= \ov{\lambda\ov\bx}\cdot\lambda=\lambda^2\bx,
\end{align*}
implying that $\lambda^2$ is a C-eigenvalue of $\F$.

On the other hand, suppose $\bx\in\C^n$ is a C-eigenvector associated with a nonnegative C-eigenvalue $\lambda^2$ of $\F$. Then by~\eqref{eq:Ceigen2} we have $\bx^{\HH}\bx=1$ and
\begin{align}
\ov{\HI(\underbrace{\bx,\dots,\bx}_d)} \cdot \HI(\bullet,\underbrace{\bx,\dots,\bx}_{d-1})
&= \ov\HI(\underbrace{\ov\bx,\dots,\ov\bx}_d) \cdot \HI(\underbrace{\bx,\dots,\bx}_{d-1},\bullet) \nonumber \\
&= \F(\underbrace{\ov\bx,\dots,\ov\bx}_d,\underbrace{\bx,\dots,\bx}_{d-1},\bullet)
= \lambda^2\ov\bx, \label{eq:Qeigen-Ceigen}
\end{align}
where the first equality is due to the symmetricity of $\HI$. This leads to $|\HI(\bx,\dots,\bx)|^2 =\lambda^2$. Let $\HI(\bx,\dots,\bx)=\lambda e^{\ii\theta}$ with {some fixed} $\theta\in[0,2\pi)$, and further define $\by=\bx e^{-\ii\theta/d}$. We then get
$$
\HI(\underbrace{\by,\dots,\by}_d)=\HI(\underbrace{\bx e^{-\ii\theta/d},\dots,\bx e^{-\ii\theta/d}}_d) = (e^{-\ii\theta/d})^d\HI(\underbrace{\bx,\dots,\bx}_d)
=e^{-\ii\theta}\lambda e^{\ii\theta}=\lambda.
$$
Now we are able to verify that $\by$ is a Q-eigenvector associated with Q-eigenvalue $\lambda$ of $\HI$.
Observing $\by^{\HH}\by=(\bx e^{-\ii\theta/d})^{\HH}\bx e^{-\ii\theta/d}=1$, and by~\eqref{eq:Qeigen-Ceigen},
\begin{align*}
\lambda^2\ov\bx&=\ov{\HI(\underbrace{\bx,\dots,\bx}_d)} \cdot \HI(\bullet,\underbrace{\bx,\dots,\bx}_{d-1})\\
&=\ov{\lambda e^{\ii\theta}}\HI(\bullet, \underbrace{\by e^{\ii\theta/d},\dots,\by e^{\ii\theta/d}}_{d-1})\\
&=\lambda e^{-\ii\theta}(e^{\ii\theta/d})^{d-1}\HI(\bullet,\underbrace{\by,\dots,\by}_{d-1}),
\end{align*}
we finally get
$$\HI(\bullet,\underbrace{\by,\dots,\by}_{d-1})=\lambda \ov{\bx}e^{\ii\theta/d}=\lambda\ov{\by e^{\ii\theta/d}} e^{\ii\theta/d}=\lambda\ov\by.$$
\end{proof}

{As we saw} in Section~\ref{sec:preparation}, by definition, a symmetric conjugate form is a special general conjugate form. Hence in terms of their tensor representations, a conjugate partial-symmetric tensor is a special case of conjugate super-symmetric tensor, although they live in different tensor spaces. To study the relationship between the C-eigenvalue and the G-eigenvalue, let us { embed a conjugate partial-symmetric tensor} $\F\in\C^{n^{2d}}$ to the space of $\C^{(2n)^{2d}}$. The conjugate super-symmetric tensor $\G\in\C^{(2n)^{2d}}$ corresponding to $\F$ is then defined by
\begin{equation}\label{eq:cps-css}
\G_{j_1\dots j_{2d}}=\left\{
\begin{array}{ll}
\F_{i_1\dots i_{2d}}/\binom{2d}{d}, & (j_1\dots j_{2d}) \in \Pi(i_1,\dots, i_d,i_{d+1}+n,\dots,i_{2d}+n); \\
0, & \mbox{otherwise}.
\end{array}
\right.
\end{equation}
For example when $d=1$, a conjugate partial-symmetric tensor is simply a Hermitian matrix $A \in \C^{n^2}$. Then its embedded conjugate super-symmetric tensor is
$\left( \begin{smallmatrix} O & A/2 \\A^{\T}/2 & O \end{smallmatrix}  \right)\in\C^{(2n)^2}$, and clearly we have
 $$
 \ov{\bx}^{\T}A\bx=\dbinom{\ov{\bx}}{\bx}^{\T}\left(
   \begin{array}{cc}
     O & A/2 \\
     A^{\T}/2 & O \\
   \end{array}
 \right)\dbinom{\ov{\bx}}{\bx}.
 $$
In general it is straightforward to verify that
\begin{equation}\label{eq:CG-link}
\F(\underbrace{\ov{\bx},\dots,\ov{\bx}}_d,\underbrace{\bx,\dots,\bx}_d) =\G\bigg( \underbrace{\dbinom{\ov\bx}{\bx}, \dots, \dbinom{\ov\bx}{\bx}}_{2d} \bigg).
\end{equation}
Based on this, we are led to the following relationship between the C-eigenvalue and the G-eigenvalue.
\begin{theorem}\label{thm:c-g-eigen}
If $\G\in \C^{(2n)^{2d}}$ is a conjugate super-symmetric tensor induced by a conjugate partial-symmetric tensor $\F \in \C^{n^{2d}}$ according to~\eqref{eq:cps-css}, then $\lambda$ is a C-eigenvalue of $\F$ if and only if $\lambda/2$ is a G-eigenvalue of $\G$.
\end{theorem}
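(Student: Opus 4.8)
The plan is to reduce everything to the scalar identity~\eqref{eq:CG-link}, which asserts that the real-valued symmetric conjugate form carried by $\F$ and the real-valued general conjugate form carried by $\G$ are one and the \emph{same} function of $\bx\in\C^n$; call it $\phi(\bx)$. From this I would first extract a vector identity by differentiating both sides in the sense of Wirtinger calculus, with respect to $\bx$ and to $\ov\bx$. On the left, the conjugate partial-symmetry of $\F$ gives $\frac{\partial\phi}{\partial\ov\bx}=d\,\F(\bullet,\ov\bx,\dots,\ov\bx,\bx,\dots,\bx)$ and $\frac{\partial\phi}{\partial\bx}=d\,\F(\ov\bx,\dots,\ov\bx,\bx,\dots,\bx,\bullet)$ (the right-hand sides here being exactly the left-hand sides of~\eqref{eq:Ceigen} and~\eqref{eq:Ceigen2}). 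On the right, the chain rule applied to $\G\big(\binom{\ov\bx}{\bx},\dots,\binom{\ov\bx}{\bx}\big)$ with $2d$ slots, together with the symmetry of $\G$, gives $\frac{\partial\phi}{\partial\ov\bx}=2d$ times the top $n$-block and $\frac{\partial\phi}{\partial\bx}=2d$ times the bottom $n$-block of $\G\big(\binom{\bullet}{\bullet},\binom{\ov\bx}{\bx},\dots,\binom{\ov\bx}{\bx}\big)$. Equating the two expressions for each gradient yields the key identity
\begin{equation*}
\G\bigg(\dbinom{\bullet}{\bullet},\underbrace{\dbinom{\ov\bx}{\bx},\dots,\dbinom{\ov\bx}{\bx}}_{2d-1}\bigg)=\frac12\dbinom{\F(\bullet,\ov\bx,\dots,\ov\bx,\bx,\dots,\bx)}{\F(\ov\bx,\dots,\ov\bx,\bx,\dots,\bx,\bullet)}.
\end{equation*}
Alternatively this can be checked directly by the same index bookkeeping that establishes~\eqref{eq:CG-link}, now keeping track of the normalizing factor $\binom{2d}{d}$ in~\eqref{eq:cps-css}; I would use whichever turns out shorter.

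With the identity in hand the theorem is a substitution. For the ``only if'' direction, let $\lambda$ be a C-eigenvalue of $\F$ with unit C-eigenvector $\bx$. Then~\eqref{eq:Ceigen} reads $\F(\bullet,\ov\bx,\dots,\ov\bx,\bx,\dots,\bx)=\lambda\bx$, and conjugating it and using the conjugate partial-symmetry of $\F$ (which is precisely the content of the second characterization~\eqref{eq:Ceigen2} in Proposition~\ref{thm:Ceigen2}) gives $\F(\ov\bx,\dots,\ov\bx,\bx,\dots,\bx,\bullet)=\lambda\ov\bx$. Substituting both into the displayed identity makes its right-hand side equal to $\frac{\lambda}{2}\binom{\bx}{\ov\bx}$; together with $\bx^{\HH}\bx=1$ this is exactly~\eqref{eq:Geigen} for $\G$ with eigenvalue $\lambda/2$, so $\lambda/2$ is a G-eigenvalue of $\G$ (and it is automatically real, in agreement with Proposition~\ref{thm:eigen-real}). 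For the ``if'' direction, if $\mu$ is a G-eigenvalue of $\G$ with unit G-eigenvector $\bx$, then reading off the top $n$-block of~\eqref{eq:Geigen} and invoking the identity gives $\tfrac12\F(\bullet,\ov\bx,\dots,\ov\bx,\bx,\dots,\bx)=\mu\bx$, i.e.\ $\F(\bullet,\ov\bx,\dots,\ov\bx,\bx,\dots,\bx)=2\mu\bx$; with $\bx^{\HH}\bx=1$ this is~\eqref{eq:Ceigen} for $\F$ with eigenvalue $2\mu$, so $2\mu$ is a C-eigenvalue of $\F$. Putting $\lambda=2\mu$ closes the equivalence.

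A shorter but slightly less self-contained route avoids the explicit vector identity: by Proposition~\ref{prop:C-eigen}, a unit $\bx$ is a C-eigenvector of $\F$ with C-eigenvalue $\lambda$ exactly when $\bx$ is a KKT point of $\max_{\bx^{\HH}\bx=1}\phi(\bx)$ with Lagrange multiplier $d\lambda$; by Proposition~\ref{prop:G-eigen} applied to the order-$2d$ tensor $\G$, a unit $\bx$ is a G-eigenvector of $\G$ with G-eigenvalue $\mu$ exactly when $\bx$ is a KKT point of the \emph{same} problem $\max_{\bx^{\HH}\bx=1}\phi(\bx)$ (the objectives agree by~\eqref{eq:CG-link}) with Lagrange multiplier $2d\mu$. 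Since the constraint gradient is $\bx\neq 0$ on the feasible set, the multiplier attached to a given KKT point is unique, so necessarily $d\lambda=2d\mu$, i.e.\ $\mu=\lambda/2$, and the two eigenvalue conditions match up.

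I do not anticipate a genuine obstacle; the one place needing care is the bookkeeping of scalar factors. The factor $2$ producing the ``$\lambda/2$'' in the statement comes from $\F$ being contracted against $d$ copies each of $\ov\bx$ and $\bx$ while $\G$ is contracted against $2d$ copies of $\binom{\ov\bx}{\bx}$ (equivalently, from the $\binom{2d}{d}$ in~\eqref{eq:cps-css}), and this must be tracked consistently through either the differentiation of~\eqref{eq:CG-link} or the two KKT characterizations. One should also note at the outset that $\G$ really is conjugate super-symmetric---this is part of the hypothesis---so that Definition~\ref{thm:Geigen} and Proposition~\ref{prop:G-eigen} legitimately apply to it.
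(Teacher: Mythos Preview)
Your proposal is correct and follows essentially the same approach as the paper's own proof: differentiate the scalar identity~\eqref{eq:CG-link} in the Wirtinger sense to obtain the vector identity relating $\G\big(\binom{\bullet}{\bullet},\binom{\ov\bx}{\bx},\dots,\binom{\ov\bx}{\bx}\big)$ to the two $\F$-contractions appearing in Definition~\ref{thm:Ceigen} and Proposition~\ref{thm:Ceigen2}, and then read off the equivalence directly from the eigenvalue definitions. Your alternative KKT route via Propositions~\ref{prop:C-eigen} and~\ref{prop:G-eigen} is a valid and elegant shortcut that the paper does not spell out, though it is really the same computation in disguise.
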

\begin{proof}
First, by taking the gradient $\left(\frac{\partial}{\partial \ov\bx},\frac{\partial}{\partial \bx} \right)$ on both sides of~\eqref{eq:CG-link}, we have that
  $$
    \dbinom{d \cdot \F(\bullet,\overbrace{\ov{\bx},\dots,\ov{\bx}}^{d-1},\overbrace{\bx,\dots,\bx}^d)}
    {d \cdot \F(\underbrace{\ov{\bx},\dots,\ov{\bx}}_d,\underbrace{\bx,\dots,\bx}_{d-1},\bullet) } =  2d \cdot \G\bigg(\dbinom{\bullet}{\bullet}, \underbrace{\dbinom{\ov\bx}{\bx}, \dots,
    \dbinom{\ov\bx}{\bx}}_{2d-1} \bigg).
  $$
Next, according to Definition~\ref{thm:Ceigen} and Proposition~\ref{thm:Ceigen2}, $\lambda$ is a C-eigenvalue of $\F$ if and only if there exists a vector $\bx\in\C^n$ such that
  $$
\left\{
\begin{array}{l}
\F(\bullet, \underbrace{\ov{\bx},\dots,\ov{\bx}}_{d-1},\underbrace{\bx,\dots,\bx}_d) = \lambda \bx\\
\F(\underbrace{\ov{\bx},\dots,\ov{\bx}}_d, \underbrace{\bx,\dots,\bx}_{d-1}, \bullet) = \lambda \ov{\bx}\\
\bx^{\HH}\bx=1.
\end{array}
\right.
  $$
Finally, according to Definition~\ref{thm:Geigen}, $\lambda/2$ is a G-eigenvalue of $\G$ if and only if there exists a vector $\bx\in\C^n$ such that
$$
\left\{
\begin{array}{l} \G\bigg(\dbinom{\bullet}{\bullet}, \underbrace{\dbinom{\ov\bx}{\bx}, \dots,
\dbinom{\ov\bx}{\bx}}_{2d-1} \bigg)=\frac{\lambda}{2} \dbinom{\bx}{\ov\bx}\\
\bx^{\HH}\bx=1.
 \end{array}\right.
$$
The conclusion follows immediately by combining the above three facts.
\end{proof}

\section{Extending Banach's theorem to the real-valued conjugate forms}\label{sec:Banach}

A classical result originally due to Banach~\cite{B38} states that if $\CL(\bx^1,\dots,\bx^d)$ is a continuous symmetric $d$-linear form, then
\begin{equation}\label{banach}
\sup\{ |\CL(\bx^1,\dots,\bx^d)| \mid \|\bx^1\|\le1, \dots, \|\bx^d\|\le1\} = \sup \{ |\CL(\underbrace{\bx, \dots, \bx}_d)| \mid \|\bx\|\le1\}.
\end{equation}
In the space of real tensors where $\bx\in\R^n$ and $\CL$ is a multilinear form defined by a real symmetric tensor $\CL\in\R^{n^d}$,~\eqref{banach} states that the largest singular value~\cite{L05} of $\CL$ is equal to the {largest eigenvalue~\cite{Q05} (in the absolute value sense) of $\CL$ }, i.e.,
\begin{equation}\label{banach_real}
  \max_{(\bx^k)^{\T}\bx^k= 1,\,\bx^k \in \R^n,\,k=1,\dots,d}  \CL( \bx^1, \dots, \bx^d) = \max_{\bx^{\T}\bx=1,\,\bx \in \R^n} | \CL( \underbrace{\bx, \dots, \bx}_d ) |.
\end{equation}
Alternatively,~\eqref{banach_real} is essentially equivalent to the fact that the best rank-one approximation of a real symmetric tensor can be obtained at a symmetric rank-one tensor~\cite{CHLZ12,ZLQ12}. A recent development on this topic for special classes of real symmetric tensors can be found in~\cite{CHLZ14}. In this section, we shall extend Banach's theorem to { symmetric conjugate forms (the conjugate partial-symmetric tensors) and general conjugate} forms (the conjugate super-symmetric tensors).

\subsection{Equivalence for conjugate super-symmetric tensors} \label{sec:gbanach}

Let us start with conjugate super-symmetric tensors, which are a generalization of conjugate partial-symmetric tensors. A key observation leading to the equivalence (Theorem~\ref{thm:equal-con}) is the following result.
\begin{lemma}\label{lemma:super-sym}
For a given real tensor $\F\in \R^{n^d}$, if $\F(\bx^1, \dots, \bx^d) =  \F( \bx^{\pi(1)}, \dots, \bx^{\pi(d)})$ for every $\bx^1,\dots,\bx^d \in \R^n$ and every permutation $\pi$ of $\{1,\dots,d \}$, then $\F$ is symmetric.
\end{lemma}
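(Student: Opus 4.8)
The plan is to show that the components of $\F$ are invariant under every transposition of indices, since transpositions generate the full symmetric group and hence this suffices for full symmetricity. The bridge between the functional identity and the entrywise identity is the standard device of extracting tensor entries by evaluating the multilinear form at standard basis vectors: for any indices $i_1,\dots,i_d$ we have $\F_{i_1\dots i_d} = \F(\be_{i_1},\dots,\be_{i_d})$, where $\be_j$ is the $j$-th standard basis vector of $\R^n$. This is the key step and the only place where one must be slightly careful, but it is routine: the multilinear form is additive and homogeneous in each argument, so plugging in basis vectors picks out exactly one term of the defining sum.

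First I would fix a permutation $\pi$ that is a transposition, say the one swapping positions $p$ and $q$, and fix arbitrary indices $i_1,\dots,i_d \in \{1,\dots,n\}$. Setting $\bx^k = \be_{i_k}$ for $k=1,\dots,d$ in the hypothesis $\F(\bx^1,\dots,\bx^d) = \F(\bx^{\pi(1)},\dots,\bx^{\pi(d)})$ and using the entry-extraction identity on both sides yields $\F_{i_1\dots i_d} = \F_{i_{\pi(1)}\dots i_{\pi(d)}}$, i.e. the entry is unchanged when the values at positions $p$ and $q$ are interchanged. Since the indices $i_1,\dots,i_d$ were arbitrary, this proves invariance of every entry under the transposition $(p\,q)$.

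Finally I would invoke the fact that an arbitrary permutation of $\{1,\dots,d\}$ factors as a product of transpositions, so applying the above repeatedly shows $\F_{i_1\dots i_d} = \F_{i_{\sigma(1)}\dots i_{\sigma(d)}}$ for every permutation $\sigma$ and all indices, which is precisely the definition of $\F$ being symmetric. I do not anticipate a genuine obstacle here; the only subtlety worth stating explicitly in the write-up is that the functional hypothesis is assumed for \emph{every} choice of vectors, which is exactly what licenses specializing to standard basis vectors, and that multilinearity is what makes that specialization extract a single component. One could alternatively phrase the argument via polarization, but the basis-vector substitution is the cleanest route.
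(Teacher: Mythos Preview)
Your argument is correct. Evaluating the multilinear form at standard basis vectors indeed extracts a single tensor entry, and applying the hypothesis with $\bx^k=\be_{i_k}$ gives $\F_{i_1\dots i_d}=\F_{i_{\pi(1)}\dots i_{\pi(d)}}$ directly. One small redundancy: since the hypothesis is assumed for \emph{every} permutation $\pi$, there is no need to pass through transpositions and then compose; you can take $\pi$ to be an arbitrary permutation from the outset and conclude symmetry in one step.

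As for comparison with the paper: the paper states this lemma without proof, treating it as a standard observation. Your write-up is thus more detailed than what the paper provides, and the approach you take (basis-vector evaluation) is the natural one the authors presumably had in mind.
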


Our first result in this section extends~\eqref{banach_real} to any conjugate super-symmetric tensors in the complex domain.
\begin{theorem}\label{thm:equal-con}
For any conjugate super-symmetric tensor $\G\in\C^{(2n)^d}$, we have
\begin{equation}\label{eq:equal-conjugate}
\max_{\bx^{\HH}\bx=1} \bigg{|} \G\bigg( \underbrace{\dbinom{\ov\bx}{\bx}, \dots, \dbinom{\ov\bx}{\bx}}_d \bigg) \bigg{|}  = \max_{(\bx^k)^{\HH}\bx^k=1,\,k=1,\dots,d}\re  \G\bigg( \dbinom{\,\ov{\bx^1}\,}{\bx^1}, \dots, \dbinom{\,\ov{\bx^d}\,}{\bx^d} \bigg).
\end{equation}
\end{theorem}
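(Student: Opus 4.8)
The plan is to prove the two inequalities separately. The direction ``$\le$'' is immediate: setting $\bx^1=\dots=\bx^d=\bx$ in the right-hand side, the argument of the $\max$ becomes $\re\G\big(\binom{\ov\bx}{\bx},\dots,\binom{\ov\bx}{\bx}\big)$, and since this quantity is real-valued (the defining property of conjugate super-symmetric tensors, Definition~\ref{thm:css}) it equals $\big|\G\big(\binom{\ov\bx}{\bx},\dots,\binom{\ov\bx}{\bx}\big)\big|$ up to a sign; but by replacing $\bx$ with $e^{\ii\phi}\bx$ for a suitable phase $\phi$, which leaves $\bx^{\HH}\bx=1$ intact and rotates the value by $e^{\ii d\phi}$ in a way I can choose to make it nonnegative, the left-hand side is recovered. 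Hence ``$\le$'' holds, and in fact the absolute value on the left can be replaced by the value itself after a phase rotation.

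For the reverse inequality ``$\ge$'', the core idea is to mimic the real-variable Banach argument, using Lemma~\ref{lemma:super-sym} as the linchpin. Let $(\bx^1,\dots,\bx^d)$ attain the maximum on the right-hand side, and define the function $\Phi:(\C^n)^d\to\R$ by $\Phi(\bx^1,\dots,\bx^d)=\re\G\big(\binom{\ov{\bx^1}}{\bx^1},\dots,\binom{\ov{\bx^d}}{\bx^d}\big)$, restricted to the product of unit spheres. First I would observe that $\Phi$ is symmetric under permutations of its $d$ slots, because $\G$ is symmetric as a tensor. Then I would argue, via a Lagrange/KKT stationarity analysis at the maximizer (using Wirtinger calculus, exactly as in Propositions~\ref{prop:C-eigen} and~\ref{prop:G-eigen}), that at an optimal tuple all the arguments can be taken equal: the first-order conditions force each $\bx^k$ to satisfy the same eigen-type relation $\G\big(\binom{\bullet}{\bullet},\binom{\ov{\bx^1}}{\bx^1},\dots\big)=\mu\binom{\bx^k}{\ov{\bx^k}}$ with a common multiplier, and a convexity/averaging argument (or a direct perturbation argument moving one $\bx^k$ toward another along the sphere) shows the optimum is not decreased by collapsing to the diagonal. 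This collapses the right-hand side to $\max_{\bx^{\HH}\bx=1}\re\G\big(\binom{\ov\bx}{\bx},\dots\big)$, which is $\le$ the left-hand side.

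An alternative, cleaner route for ``$\ge$'' that I would try first is the polarization-style trick underlying Banach's theorem: given $\bx^1,\dots,\bx^d$ on the unit sphere, form the random vector $\bx(\epsilon,\phi)=\sum_{k=1}^d \epsilon_k e^{\ii\phi_k}\bx^k$ (or a suitably normalized version), with $\epsilon_k$ independent signs and $\phi_k$ chosen phases, and expand $\G\big(\binom{\ov{\bx(\epsilon,\phi)}}{\bx(\epsilon,\phi)},\dots\big)$ multilinearly; taking expectations over the $\epsilon_k$ isolates the fully-mixed term $d!\,\G\big(\binom{\ov{\bx^1}}{\bx^1},\dots,\binom{\ov{\bx^d}}{\bx^d}\big)$, so some realization of $\bx(\epsilon,\phi)$ gives a diagonal value at least as large as the mixed one, after normalizing by $\|\bx(\epsilon,\phi)\|$. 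The subtlety here — and the step I expect to be the main obstacle — is the simultaneous appearance of $\ov{\bx}$ and $\bx$ in the argument $\binom{\ov\bx}{\bx}$: the standard real polarization identity must be adapted so that conjugation interacts correctly with the sign/phase averaging, and one must verify that the normalization factor $\|\bx(\epsilon,\phi)\|^d$ does not overwhelm the gain. I anticipate that handling the conjugate slots — ensuring the cross terms that survive the averaging are exactly the desired real part and that the phases can be chosen to keep everything real and correctly signed — is where the real work lies; the rest is the routine Banach-type bookkeeping together with an appeal to Lemma~\ref{lemma:super-sym} to guarantee the tensor-level symmetry that makes the mixed term the full symmetrization.
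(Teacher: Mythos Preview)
Your proposal contains a concrete error and, more importantly, misses the simplification that makes the paper's proof only a few lines. The phase-rotation claim is wrong: under $\bx\mapsto e^{\ii\phi}\bx$ the vector $\binom{\ov\bx}{\bx}$ becomes $\binom{e^{-\ii\phi}\ov\bx}{e^{\ii\phi}\bx}$, which is not a scalar multiple of itself, so the value is \emph{not} multiplied by $e^{\ii d\phi}$; different monomials acquire different phases depending on how many conjugate versus ordinary variables they carry. The same obstruction hits your polarization route, since the phases $e^{\ii\phi_k}$ do not factor through the map $\bx\mapsto\binom{\ov\bx}{\bx}$. (With \emph{real} signs $\epsilon_k$ only the structure is preserved, but then you are simply replaying the real Banach argument and must still control the normalization $\|\sum_k\epsilon_k\bx^k\|^d$, which is the nontrivial step.) Your KKT route is not a proof either: the assertion that stationarity plus ``a convexity/averaging argument'' collapses the optimizer to the diagonal \emph{is} the content of Banach's theorem, and first-order conditions alone do not imply it.

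The paper's insight is that nothing needs to be re-proved: the problem \emph{is} a real Banach problem in disguise. Set $\by^k=\binom{\re\bx^k}{\im\bx^k}\in\R^{2n}$. The map $\bx\mapsto\binom{\ov\bx}{\bx}$ is $\R$-linear, so $\re\G\big(\binom{\ov{\bx^1}}{\bx^1},\dots,\binom{\ov{\bx^d}}{\bx^d}\big)=\F(\by^1,\dots,\by^d)$ for some real tensor $\F\in\R^{(2n)^d}$. Conjugate super-symmetry of $\G$ makes $\F(\by^1,\dots,\by^d)$ permutation-invariant in the $\by^k$, whence $\F$ is symmetric by Lemma~\ref{lemma:super-sym}; since $(\by^k)^{\T}\by^k=(\bx^k)^{\HH}\bx^k$, the real identity~\eqref{banach_real} applied to $\F$ is exactly~\eqref{eq:equal-conjugate}. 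You mention Lemma~\ref{lemma:super-sym} only peripherally, but in the paper it is the linchpin that certifies the realified tensor is symmetric so that the real Banach theorem can be invoked as a black box.
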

\begin{proof} Let $\by^k = \binom{\re \bx^k}{\im \bx^k}\in\R^{2n}$ for $k=1,\dots, d$. We observe that $\re \G\big( \binom{\,\ov{\bx^1}\,}{\bx^1}, \dots, \binom{\,\ov{\bx^d\,}}{\bx^d} \big)$ is also a multilinear form with respect to $\by^1,\dots,\by^d$. As a result, we are able to find a real tensor $\F\in \R^{(2n)^d}$ such that
\begin{equation}\label{eq:real-conjugate}
\F\big( \by^1, \dots, \by^d\big) = \re  \G\bigg(\,\dbinom{\,\ov{\bx^1}\,}{\bx^1}, \dots, \dbinom{\,\ov{\bx^d}\,}{\bx^d} \bigg).
\end{equation}
As $\G$ is conjugate super-symmetric, for any $\by^1,\dots,\by^d \in \R^{2n}$ and any permutation $\pi$ of $\{1,\dots,d\}$, one has
\begin{align*}
\F\big( \by^1, \dots, \by^d\big) &= \re  \G\bigg( \dbinom{\,\ov{\bx^1}\,}{\bx^1}, \dots, \dbinom{\,\ov{\bx^d}\,}{\bx^d} \bigg) \\
&= \re  \G\bigg( \dbinom{\,\ov{\bx^{\pi(1)}}\,}{\bx^{\pi(1)}}, \dots, \dbinom{\,\ov{\bx^{\pi(d)}}\,}{\bx^{\pi(d)}} \bigg)\\
&= \F\big( \by^{\pi(1)}, \dots, \by^{\pi(d)}\big).
\end{align*}
By Lemma~\ref{lemma:super-sym} we have that the real tensor $\F$ is symmetric. Finally, noticing that $(\by^k)^{\T}\by^k=(\bx^k)^{\HH}\bx^k$ for $k=1,\dots,d$, the conclusion follows immediately by applying~\eqref{banach_real} to $\F$ and then using the equality~\eqref{eq:real-conjugate}.
\end{proof}

\subsection{Equivalence for conjugate partial-symmetric tensors} \label{sec:cbanach}

For {extending Banach's theorem to a conjugate partial-symmetric tensor $\F\in\C^{n^{2d}}$, one could hope to proceed as follows. Since it is} a special case of the conjugate super-symmetric tensor, one can embed $\F$ into a conjugate super-symmetric tensor
 $\G\in\C^{(2n)^{2d}}$ using~\eqref{eq:cps-css}. Then, by applying Theorem~\ref{thm:equal-con} to $\G$ and rewriting the real part of its associated multilinear form $\re\G\big( \binom{\,\ov{\bx^1}\,}{\bx^1}, \dots, \binom{\,\ov{\bx^{2d}}\,}{\bx^{2d}} \big)$ in terms of $\F$, we may have an equivalent expression as~\eqref{eq:equal-conjugate}. However, this expression is not succinct.
Taking the case $d=2$ (degree 4) for example, it is straightforward to verify from~\eqref{eq:cps-css} that
\begin{align*}
&~~~~\re\G\bigg(\binom{\,\ov{\bx^1}\,}{\bx^1},\binom{\,\ov{\bx^2}\,}{\bx^2}, \binom{\,\ov{\bx^3}\,}{\bx^3},\binom{\,\ov{\bx^4}\,}{\bx^4}\bigg)\\
&=\frac{1}{6} \left(\F(\ov{\bx^1},\ov{\bx^2},\bx^3,\bx^4)
+\F(\ov{\bx^1},\ov{\bx^3},\bx^2,\bx^4)
+\F(\ov{\bx^1},\ov{\bx^4},\bx^2,\bx^3)\right.\\
&~~~\left.+\F(\ov{\bx^2},\ov{\bx^3},\bx^1,\bx^4)
+\F(\ov{\bx^2},\ov{\bx^4},\bx^1,\bx^3)
+\F(\ov{\bx^3},\ov{\bx^4},\bx^1,\bx^2)\right)\\
&:=f_S'(\bx^1,\bx^2,\bx^3,\bx^4),
\end{align*}
and this would lead to
$$
\max_{\bx^{\HH}\bx=1} | \F(\ov{\bx},\ov{\bx},\bx,\bx) |  = \max_{(\bx^k)^{\HH}\bx^k=1,\,k=1,2,3,4}f_S'(\bx^1,\bx^2,\bx^3,\bx^4).
$$
Instead, one would hope to get
\begin{equation}\label{eq:equivalence-conjugate}
\max_{\bx^{\HH}\bx=1} | \F(\underbrace{\ov{\bx},\dots,\ov{\bx}}_d,\underbrace{\bx,\dots,\bx}_d) | = \max_{(\bx^k)^{\HH}\bx^k=1,\,k=1,\dots,2d} \re\F(\ov{\bx^1},\dots,\ov{\bx^d},\bx^{d+1},\dots,\bx^{2d}).
\end{equation}
However, this does not hold in general. The main reason is that
$$\G\bigg( \dbinom{\,\ov{\bx^1}\,}{\bx^1}, \dots, \dbinom{\,\ov{\bx^{2d}}\,}{\bx^{2d}} \bigg)\neq \F(\ov{\bx^1},\dots,\ov{\bx^d},\bx^{d+1},\dots,\bx^{2d}),$$
which is easily observed since its left hand side is invariant under the permutation of $(\bx^1,\dots,\bx^{2d})$ while its right hand side is not. In particular,~\eqref{eq:equivalence-conjugate} only holds for $d=1$, viz.\ Hermitian matrices; {see the following proposition} and Example~\ref{ex:quarticfail}.

\begin{proposition}\label{degree2}
  For any Hermitian matrix $Q\in\C^{n\times n}$, it holds that
   $$
     (L)\quad {\max_{\bz^{\HH}\bz=1}  \bz^{\HH} Q \bz } = \max_{\bx^{\HH}\bx=\by^{\HH}\by=1} \re \bx^{\T} Q \by. \quad (R)
   $$
   Furthermore, for any optimal solution $(\bx^*,\by^*)$ of $(R)$ with $\ov{\bx^*}+\by^*\neq 0$, $(\ov{\bx^*} + \by^*)/\|\ov{\bx^*} + \by^*\|$ is an optimal solution of $(L)$ as well.
\end{proposition}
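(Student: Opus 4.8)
The plan is to prove the inequality $(L) \le (R)$ first, then $(R) \le (L)$, and finally extract the optimal solution of $(L)$ from that of $(R)$. For $(L) \le (R)$: given any feasible $\bz$ with $\bz^{\HH}\bz = 1$, set $\bx = \ov{\bz}$ and $\by = \bz$. Then $\bx^{\HH}\bx = \by^{\HH}\by = 1$, and $\bx^{\T} Q \by = \bz^{\HH} Q \bz$, which is real since $Q$ is Hermitian; hence $\re \bx^{\T} Q \by = \bz^{\HH} Q \bz$, so $(L) \le (R)$.

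For the reverse inequality $(R) \le (L)$, I would take any feasible $(\bx, \by)$ and analyze $\re \bx^{\T} Q \by = \tfrac12(\bx^{\T} Q \by + \ov{\bx^{\T} Q \by}) = \tfrac12(\bx^{\T} Q \by + \by^{\HH} Q \ov{\bx})$, using $\ov{Q^{\T}} = Q$. The natural candidate for a feasible point of $(L)$ is $\bz = (\ov{\bx} + \by)/\|\ov{\bx}+\by\|$ (assuming the numerator is nonzero; the degenerate case needs a separate short argument). Expanding $\bz^{\HH} Q \bz$ in terms of $\bx, \by$ gives $\tfrac{1}{\|\ov{\bx}+\by\|^2}\big( \bx^{\T} Q \ov{\bx} + \by^{\HH} Q \by + \bx^{\T} Q \by + \by^{\HH} Q \ov{\bx}\big)$, and the first two terms $\bx^{\T} Q \ov{\bx}$ and $\by^{\HH} Q \by$ are themselves values of the Hermitian form (at $\ov{\bx}$ and at $\by$), hence real and each at most $(L)$. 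Combining this with $\|\ov{\bx}+\by\|^2 \le 2(\|\bx\|^2 + \|\by\|^2) = 4$ — wait, more carefully $\|\ov{\bx}+\by\|^2 = \|\bx\|^2 + \|\by\|^2 + 2\re(\bx^{\T}\by) \le 2 + 2 = 4$ — one should be able to bound $2\re \bx^{\T} Q \by = (\bx^{\T} Q \by + \by^{\HH}Q\ov{\bx})$ above by $\|\ov{\bx}+\by\|^2 (L) - \bx^{\T}Q\ov{\bx} - \by^{\HH}Q\by \le 4(L) - \bx^{\T}Q\ov{\bx} - \by^{\HH}Q\by$; the bookkeeping here is the one delicate point, and it is cleanest to first reduce to the case where $Q \succeq 0$ (replace $Q$ by $Q + cI$ for large real $c$, which shifts both sides of the identity $(L)=(R)$ by the same constant $c$ since $\bx^{\T}(cI)\by = c\,\bx^{\T}\by$ and $\re \bx^{\T}\by$ is not automatically $1$ — so actually shifting does not work directly and I would instead argue via the spectral decomposition $Q = \sum_i \mu_i \bu_i \bu_i^{\HH}$).

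Taking the spectral route: writing $Q = \sum_i \mu_i \bu_i \bu_i^{\HH}$ with $\mu_i \in \R$ and $\{\bu_i\}$ orthonormal, we have $\bx^{\T} Q \by = \sum_i \mu_i (\bx^{\T}\bu_i)(\bu_i^{\HH}\by) = \sum_i \mu_i \ov{\bu_i^{\HH}\ov{\bx}}\,(\bu_i^{\HH}\by)$. Setting $a_i = \bu_i^{\HH}\ov{\bx}$ and $b_i = \bu_i^{\HH}\by$, feasibility gives $\sum_i |a_i|^2 = \sum_i |b_i|^2 = 1$, and $(L) = \max_i \mu_i$ over unit vectors corresponds to $\max\{\sum_i \mu_i |c_i|^2 : \sum |c_i|^2 = 1\}$, i.e.\ $(L)$ is the largest $\mu_i$ if all were positive but in general $(L) = \max\{\sum \mu_i |c_i|^2\} = \max_i \mu_i$ only when this is nonnegative — here $(L)$ as written is $\max_i \mu_i$ (which could be negative), so $(L) = \mu_{\max}$. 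Then $\re \bx^{\T} Q \by = \re \sum_i \mu_i \ov{a_i} b_i \le \sum_i \mu_i |a_i||b_i| \le \mu_{\max} \sum_i |a_i||b_i| \le \mu_{\max} \cdot \tfrac12(\sum|a_i|^2 + \sum|b_i|^2) = \mu_{\max} = (L)$, using $|a_i||b_i| \le \tfrac12(|a_i|^2+|b_i|^2)$ and $\mu_i \le \mu_{\max}$ together with $\mu_i |a_i||b_i| \le \mu_{\max}|a_i||b_i|$ when $\mu_{\max}\ge 0$; the case $\mu_{\max} < 0$ is handled by replacing $Q$ with $-Q$ and noting both $(L)$ and $(R)$ are anti-invariant in sign under $\bx \mapsto -\bx$... actually $(L)$ and $(R)$ transform the same way, so the inequality $(R)\le(L)$ degenerates but one checks it directly. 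This spectral computation is the technical heart and I expect the sign/degenerate bookkeeping to be the main obstacle.

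Finally, for the ``furthermore'' claim: equality in the chain above forces, at an optimal $(\bx^*, \by^*)$, that all the intermediate inequalities are tight. Tightness of $|a_i||b_i| \le \tfrac12(|a_i|^2 + |b_i|^2)$ forces $|a_i| = |b_i|$ for each $i$ with $\mu_i = \mu_{\max}$, tightness of $\re(\ov{a_i}b_i) \le |a_i||b_i|$ forces $\ov{a_i}b_i \ge 0$, and tightness of $\sum \mu_i |a_i||b_i| \le \mu_{\max}\sum |a_i||b_i|$ forces $a_i = b_i = 0$ whenever $\mu_i < \mu_{\max}$. Together these say $a_i = b_i$ for all $i$, i.e.\ $\bu_i^{\HH}\ov{\bx^*} = \bu_i^{\HH}\by^*$ for all $i$, hence $\ov{\bx^*} = \by^*$ — but then $\ov{\bx^*} + \by^* = 2\by^*$ and $(\ov{\bx^*}+\by^*)/\|\ov{\bx^*}+\by^*\| = \by^*/\|\by^*\| = \by^*$, which achieves $(L)$. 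More robustly, without diagonalizing, I would argue: if $(\bx^*, \by^*)$ is optimal for $(R)$ with $\bz := (\ov{\bx^*}+\by^*)/\|\ov{\bx^*}+\by^*\| \ne 0$ well-defined, then plugging $\bz$ into the expansion of $\bz^{\HH}Q\bz$ shown above and using that $\bx^{*\T}Q\ov{\bx^*} \le \|\bx^*\|^2 (L) = (L)$, $\by^{*\HH}Q\by^* \le (L)$, and $2\re \bx^{*\T}Q\by^* = 2(R) \ge 2(L) \cdot$ (something), one deduces $\bz^{\HH}Q\bz \ge (L)$, and since $\bz$ is feasible for $(L)$ also $\bz^{\HH}Q\bz \le (L)$, so $\bz$ is optimal for $(L)$. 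This last paragraph is routine once the main equality is established.
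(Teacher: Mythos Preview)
Your approach via the spectral decomposition of $Q$ is genuinely different from the paper's, which works through the KKT system of $(R)$: writing $\re \bx^{\T}Q\by = \tfrac12(\bx^{\T}Q\by + \ov{\bx}^{\T}\ov{Q}\,\ov{\by})$ and differentiating, the paper obtains $Q\by^* = 2\lambda\,\ov{\bx^*}$ and $Q\ov{\bx^*} = 2\mu\,\by^*$ with $2\lambda = 2\mu = v(R)$, whence $Q(\ov{\bx^*}+\by^*) = v(R)\,(\ov{\bx^*}+\by^*)$; normalizing (when $\ov{\bx^*}+\by^*\ne 0$) gives a feasible $\bz^*$ for $(L)$ with $(\bz^*)^{\HH}Q\bz^* = v(R)$, so $v(L)\ge v(R)$, and the reverse inequality is exactly the easy observation in your first paragraph. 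Thus the paper gets the ``furthermore'' part and the equality in one stroke, without ever diagonalizing $Q$.

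Your spectral chain, however, breaks at its very first step: the inequality
\[
\re\sum_i \mu_i\,\ov{a_i}\,b_i \;\le\; \sum_i \mu_i\,|a_i|\,|b_i|
\]
is false as soon as some eigenvalue $\mu_i$ is negative (for such $i$ one has $\mu_i\re(\ov{a_i}b_i)\ge \mu_i|a_i||b_i|$, the reverse direction). A concrete witness: $Q=\mathrm{diag}(1,-10)$, $\bx=(0,1)^{\T}$, $\by=(0,-1)^{\T}$ give $\re\bx^{\T}Q\by = 10$, yet $\mu_{\max}=1$. Your attempted patches (shifting by $cI$, flipping the sign of $Q$) do not work, as you already half-realized. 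In fact this same example shows that the identity $(L)=(R)$ \emph{as literally written}, with no absolute value on the left, cannot hold in general; it is the version with $\max_{\bz^{\HH}\bz=1}|\bz^{\HH}Q\bz|$ on the left (the $d=1$ case of~\eqref{eq:equivalence-conjugate}) that is true. The paper's KKT argument escapes only through the side hypothesis $\ov{\bx^*}+\by^*\ne 0$, which happens to be violated by \emph{every} optimizer of $(R)$ in this example. If you restate the left side with the absolute value, your spectral route is easily repaired: absorb the sign of each $\mu_i$ into the phase of $a_i$, replacing $\mu_i$ by $|\mu_i|$ throughout, and then your chain of inequalities goes through cleanly and yields both the equality and the equality-case analysis for the ``furthermore'' part.
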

\begin{proof}
Denote $v(L)$ and $v(R)$ to be the optimal values of $(L)$ and $(R)$, respectively. Noticing that $\re \bx^{\T} Q \by = \frac{1}{2}(\bx^{\T} Q \by + \ov \bx^{\T} \ov Q \ov \by)$, by the optimality condition of $(R)$ we have that
\begin{equation}\label{eq:2opt-condition-CQP}
\left\{
\begin{array}{l} Q \by^* - 2\lambda \ov {\bx^*} =0\\
\ov Q \ov {\by^*} - 2\lambda \bx^* =0\\
\ov Q \bx^* - 2\mu \ov {\by^*} =0\\
Q \ov {\bx^*} - 2\mu \by^* =0\\
(\bx^*)^{\HH}\bx^*=1\\
(\by^*)^{\HH}\by^*=1,
\end{array}
\right.
\end{equation}
where $\lambda$ and $\mu$ are the Lagrangian multipliers of the constraints $\bx^{\HH}\bx=1$ and $\by^{\HH}\by=1$, respectively.

{Pre-multiplying the first two equations in~\eqref{eq:2opt-condition-CQP} with $(\bx^*)^{\T}$ and $(\ov{\bx^*})^{\T}$ respectively, and summing them up,} lead to
$$2\re (\bx^*)^{\T} Q \by^* = (\bx^*)^{\T} Q \by^* + (\ov{\bx^*})^{\T} \ov Q \ov{\by^*} = 2 \lambda (\bx^*)^{\T}\ov{\bx^*} + 2 \lambda (\ov{\bx^*})^{\T}\bx^* = 4\lambda(\bx^*)^{\HH}\bx^* = 4\lambda.$$
Similarly, the summation of the third and fourth equations in~\eqref{eq:2opt-condition-CQP} leads to
$$2\re (\bx^*)^{\T} Q \by^* = 4\mu,$$
which further leads to
\begin{equation}\label{eq:CQP}
v(R)= \re (\bx^*)^{\T} Q \by^* = 2\lambda =2 \mu.
\end{equation}

Moreover, the summation of the first and {fourth} equations in~\eqref{eq:2opt-condition-CQP} yields
$$Q (\by^* + \ov{\bx^*}) - 2\lambda (\ov {\bx^*} + \by^*)=0,$$
which further leads to
\begin{equation}\label{eq:CQP2}
(\by^* + \ov{\bx^*})^{\HH}Q (\by^* + \ov{\bx^*}) =2\lambda (\by^* + \ov{\bx^*})^{\HH}(\ov {\bx^*} + \by^*)= 2\lambda\|\ov {\bx^*} + \by^*\|^2.
\end{equation}
Let $\bz^*=(\ov {\bx^*} + \by^*)/\|\ov {\bx^*} + \by^*\|$. Clearly $\bz^*$ is a feasible solution of $(L)$. {By~\eqref{eq:CQP2} and~\eqref{eq:CQP} we have}
$$(\bz^*)^{\HH}Q\bz^*=2\lambda=\re (\bx^*)^{\T} Q \by^*=v(R).$$
This implies that $v(L)\ge v(R)$. Notice that $(R)$ is a relaxation of $(L)$ and hence $v(L)\le v(R)$. Therefore we conclude that $v(R)=v(L)$, and an optimal solution $\bz^*$ of $(L)$ is constructed from an optimal solution $(\bx^*,\by^*)$ of $(R)$.
\end{proof}

\begin{example}\label{ex:quarticfail}
  Let $\F\in\C^{2^4}$ with $\F_{1122}=\F_{2211}=1$ and other entries being zeros. Clearly $\F$ is conjugate partial-symmetric. In this case~\eqref{eq:equivalence-conjugate} fails to hold because:\\
  (i) $|\F(\ov{\bx},\ov{\bx},\bx,\bx)|=|{\ov{x_1}}^2{x_2}^2+{\ov{x_2}}^2{x_1}^2| \le 2|x_1|^2|x_2|^2\le\frac{1}{2}(|x_1|^2+|x_2|^2)^2=\frac{1}{2}$ for any $\bx\in\C^2$ with $\bx^{\HH}\bx=1$.\\
  (ii) $\F(\ov{\bx},\ov{\by},\bz,\bw)=\ov{x_1}\ov{y_1}z_2w_2+\ov{x_2}\ov{y_2}z_1w_1=1$ for $\bx=\by=(1,0)^{\T}$ and $\bz=\bw=(0,1)^{\T}$.
\end{example}

Thus, Banach's theorem~\eqref{eq:equivalence-conjugate} does not hold in general for conjugate partial-symmetric tensors. A natural question arises: Is there any reasonable condition to ensure the identity to hold? 
Recall from Proposition~\ref{thm:conju-decomp} that every conjugate partial-symmetric tensor can be written as $\sum_{k=1}^m\alpha_k\ov{\HI_k}\otimes \HI_k$ where $\HI_k\in\C^{n^d}$ is symmetric and $\alpha_k\in\R$ for all $1\le k\le m$. If further we have all $\alpha_k$'s being nonnegative, then~\eqref{eq:equivalence-conjugate} is true. Before presenting this result, we first need the following type of Banach's theorem for symmetric complex tensors, whose proof can be constructed almost identically to that of Theorem~\ref{thm:equal-con}.

\begin{proposition} \label{thm:complexBanach}
  If $F\in\C^{n^d}$ is symmetric, then
\begin{equation}
\max_{\bx^{\HH}\bx=1} \re\F(\underbrace{\bx,\dots,\bx}_d) = \max_{(\bx^k)^{\HH}\bx^k=1,\,k=1,\dots,d} \re\F(\bx^1,\dots,\bx^d).
\end{equation}
\end{proposition}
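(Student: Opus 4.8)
The plan is to run exactly the argument used for Theorem~\ref{thm:equal-con}, reducing the complex identity to the real Banach identity~\eqref{banach_real} via the real/imaginary decomposition of the variables. For $\bx^1,\dots,\bx^d\in\C^n$ set $\by^k=\binom{\re\bx^k}{\im\bx^k}\in\R^{2n}$. Since $\F$ is complex-multilinear, each entry $x^k_{i_k}$ is real-linear in $\by^k$, so $\re\F(\bx^1,\dots,\bx^d)$ is a real-multilinear function of $\by^1,\dots,\by^d$; hence there is a unique real tensor $G\in\R^{(2n)^d}$ with $G(\by^1,\dots,\by^d)=\re\F(\bx^1,\dots,\bx^d)$, the exact analogue of~\eqref{eq:real-conjugate}.

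First I would verify that $G$ is symmetric. Because $\F\in\C^{n^d}$ is symmetric, $\F(\bx^1,\dots,\bx^d)$, and therefore its real part, is invariant under every permutation of the arguments $\bx^1,\dots,\bx^d$; transporting this through the bijection $\bx^k\leftrightarrow\by^k$ shows that $G(\by^1,\dots,\by^d)=G(\by^{\pi(1)},\dots,\by^{\pi(d)})$ for all $\by^1,\dots,\by^d\in\R^{2n}$ and all permutations $\pi$. Lemma~\ref{lemma:super-sym} then yields that the tensor $G$ is symmetric. This is the only step that genuinely uses the structural hypothesis, and it is the place where one must be careful to distinguish ``permutation-invariant as a function'' from ``symmetric as a tensor''; Lemma~\ref{lemma:super-sym} closes precisely that gap, so I expect this to be the main (and essentially only) subtlety, the remainder being bookkeeping.

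Next I would apply the real Banach theorem~\eqref{banach_real} to the symmetric real tensor $G$, which gives
$$
\max_{(\by^k)^{\T}\by^k=1,\,k=1,\dots,d} G(\by^1,\dots,\by^d)=\max_{\by^{\T}\by=1}\,\big|G(\underbrace{\by,\dots,\by}_d)\big|,
$$
and then translate both sides back. Since $(\by^k)^{\T}\by^k=\|\bx^k\|^2=(\bx^k)^{\HH}\bx^k$ and $G(\by^1,\dots,\by^d)=\re\F(\bx^1,\dots,\bx^d)$, the left-hand side equals $\max_{(\bx^k)^{\HH}\bx^k=1}\re\F(\bx^1,\dots,\bx^d)$. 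For the right-hand side, $G(\by,\dots,\by)=\re\F(\bx,\dots,\bx)$ and $\by^{\T}\by=\bx^{\HH}\bx$; moreover replacing $\bx$ by $e^{\ii\theta}\bx$ multiplies $\F(\bx,\dots,\bx)$ by $e^{\ii d\theta}$, so the set $\{\re\F(\bx,\dots,\bx):\bx^{\HH}\bx=1\}$ is symmetric about $0$, whence $\max_{\bx^{\HH}\bx=1}|\re\F(\bx,\dots,\bx)|=\max_{\bx^{\HH}\bx=1}\re\F(\bx,\dots,\bx)$. Combining these identifications with the displayed equality yields the claimed identity (existence of the maxima being guaranteed by compactness of the spheres and continuity of the forms).
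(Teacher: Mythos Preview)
Your proposal is correct and follows exactly the approach the paper indicates (``constructed almost identically to that of Theorem~\ref{thm:equal-con}''): pass to real coordinates $\by^k=\binom{\re\bx^k}{\im\bx^k}$, use symmetry of $\F$ together with Lemma~\ref{lemma:super-sym} to obtain a symmetric real tensor, and invoke~\eqref{banach_real}. You even handle the small discrepancy---that~\eqref{banach_real} gives $\max|\re\F(\bx,\dots,\bx)|$ rather than $\max\re\F(\bx,\dots,\bx)$---via the phase rotation $\bx\mapsto e^{\ii\theta}\bx$, a point the paper leaves implicit.
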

\begin{theorem}\label{thm:Banach2}
If a conjugate partial-symmetric tensor $\F\in\C^{n^{2d}}$ written as $\sum_{k=1}^m\alpha_k \ov{\HI_k}\otimes \HI_k$ satisfies that $\alpha_k\ge0$ for all $1\le k\le m$, then
$$
(L')\quad\max_{\bx^{\HH}\bx=1} \F(\underbrace{\ov{\bx},\dots,\ov{\bx}}_d, \underbrace{\bx,\dots,\bx}_d) = \max_{(\bx^k)^{\HH}\bx^k=1,\,k=1,\dots,2d} \re\F(\bx^1,\dots,\bx^{2d}) \quad (R')
$$
\end{theorem}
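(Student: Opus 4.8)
The plan is to prove the two inequalities $v(L')\le v(R')$ and $v(L')\ge v(R')$ separately. The inequality $v(L')\le v(R')$ is immediate: the problem $(R')$ is a relaxation of $(L')$, since setting $\bx^1=\dots=\bx^d=\ov\bx$ and $\bx^{d+1}=\dots=\bx^{2d}=\bx$ (which is feasible for $(R')$ whenever $\bx^{\HH}\bx=1$) makes the objective of $(R')$ equal to $\re\F(\ov\bx,\dots,\ov\bx,\bx,\dots,\bx)=\F(\ov\bx,\dots,\ov\bx,\bx,\dots,\bx)$, the latter being real by conjugate partial-symmetricity. So the real work is the reverse inequality.

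For $v(L')\ge v(R')$, the key is to exploit the decomposition $\F=\sum_{k=1}^m\alpha_k\ov{\HI_k}\otimes\HI_k$ with $\alpha_k\ge0$ and each $\HI_k\in\C^{n^d}$ symmetric. First I would rewrite both sides using the multilinear-form identity
$$
\F(\bx^1,\dots,\bx^{2d})=\sum_{k=1}^m\alpha_k\,\ov{\HI_k}(\bx^1,\dots,\bx^d)\,\HI_k(\bx^{d+1},\dots,\bx^{2d}),
$$
so that $\F(\ov\bx,\dots,\ov\bx,\bx,\dots,\bx)=\sum_k\alpha_k|\HI_k(\bx,\dots,\bx)|^2$, which makes the left side manifestly nonnegative. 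Then I would take an optimal solution $(\bx^1,\dots,\bx^{2d})$ of $(R')$; the goal is to produce a single unit vector $\bz$ with $\F(\ov\bz,\dots,\ov\bz,\bz,\dots,\bz)\ge v(R')$. The idea is to apply the complex Banach-type result, Proposition~\ref{thm:complexBanach}, to each symmetric tensor $\HI_k$ to ``de-multilinearize'' the $\HI_k$ parts, and similarly to $\ov{\HI_k}$, collapsing all $2d$ arguments down to one. More precisely, note that $\re\F(\bx^1,\dots,\bx^{2d})=\sum_k\alpha_k\,\re\!\big(\ov{\HI_k}(\bx^1,\dots,\bx^d)\,\HI_k(\bx^{d+1},\dots,\bx^{2d})\big)$, and since $\alpha_k\ge0$, after absorbing unimodular phase factors into the $\bx^j$'s one can bound this by $\sum_k\alpha_k\,|\HI_k(\bx^1,\dots,\bx^d)|\,|\HI_k(\bx^{d+1},\dots,\bx^{2d})|$; applying the arithmetic–geometric mean inequality and then Proposition~\ref{thm:complexBanach} (in the form that the symmetric-argument maximum of $\re\HI_k$, equivalently of $|\HI_k|$ after a phase rotation, equals the multilinear maximum) would let one replace the $2d$ distinct vectors by a common maximizer. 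Running this through and reassembling the sum $\sum_k\alpha_k|\HI_k(\bz,\dots,\bz)|^2$ yields a feasible point of $(L')$ whose value is at least $v(R')$.

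The main obstacle I anticipate is making the phase/collapse argument genuinely \emph{simultaneous} across the index $k$: Proposition~\ref{thm:complexBanach} gives, for each $k$ separately, a unit vector at which $\HI_k$'s symmetric value matches its multilinear supremum, but these maximizers could in principle differ for different $k$, whereas $(L')$ demands one vector $\bz$ that works for the whole weighted sum. The way around this is to not optimize each $\HI_k$ in isolation but to apply the Banach-type equivalence once, at the level of the full form: observe that $\re\F(\bx^1,\dots,\bx^{2d})$, viewed as a $2d$-linear form in the real vectors $\by^j=\binom{\re\bx^j}{\im\bx^j}$, together with the nonnegativity $\F(\ov\bx,\dots,\ov\bx,\bx,\dots,\bx)=\sum_k\alpha_k|\HI_k(\bx,\dots,\bx)|^2\ge0$, is exactly the situation handled by Theorem~\ref{thm:equal-con} / the real Banach theorem~\eqref{banach_real}; the nonnegativity is what upgrades ``$\re$'' on the left of $(L')$ to no absolute value being needed, and it is precisely the hypothesis $\alpha_k\ge0$ that supplies it. So the cleaner route — and the one I would write up — is: (1) use the decomposition only to establish that $\F(\ov\bx,\dots,\ov\bx,\bx,\dots,\bx)\ge0$ for all $\bx$; (2) symmetrize $\re\F(\bx^1,\dots,\bx^{2d})$ into a real symmetric $2d$-tensor $G\in\R^{(2n)^{2d}}$ via Lemma~\ref{lemma:super-sym}, exactly as in the proof of Theorem~\ref{thm:equal-con}; (3) apply~\eqref{banach_real} to $G$ to collapse the $2d$ vectors to one, obtaining $\max_{\bx^{\HH}\bx=1}|\F(\ov\bx,\dots,\ov\bx,\bx,\dots,\bx)|=v(R')$; and (4) drop the absolute value on the left using step (1). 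The delicate point to check carefully in step (2)–(3) is that the symmetrized $G$ evaluated on the diagonal $\by^1=\dots=\by^{2d}=\by$ returns exactly $\F(\ov\bx,\dots,\ov\bx,\bx,\dots,\bx)$ and not some other combination — this is where the partial-symmetricity of $\F$ (so that all $\binom{2d}{d}$ ways of splitting the arguments into $d$ conjugated and $d$ plain slots agree) does the bookkeeping, and it is the analogue of the $d=2$ computation with $f_S'$ shown just before the statement.
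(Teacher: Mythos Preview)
Your ``cleaner route'' has a genuine gap at steps (2)--(3), and it is precisely the obstruction the paper flags in the discussion preceding Example~\ref{ex:quarticfail}. The real $2d$-linear form induced by $\re\F(\bx^1,\dots,\bx^{2d})$ in the variables $\by^j=\binom{\re\bx^j}{\im\bx^j}$ is \emph{not} symmetric, because $\F$ is only partial-symmetric: swapping $\bx^1$ with $\bx^{d+1}$ does not preserve $\F(\bx^1,\dots,\bx^{2d})$. Hence Lemma~\ref{lemma:super-sym} does not apply and~\eqref{banach_real} is unavailable for this form. If instead you pass through the conjugate super-symmetric embedding $\G$ of~\eqref{eq:cps-css} (which \emph{is} symmetric) and invoke Theorem~\ref{thm:equal-con}, the right-hand side you obtain is the symmetrized objective $f_S'(\bx^1,\dots,\bx^{2d})$, not $\re\F(\bx^1,\dots,\bx^{2d})$; your ``delicate point'' about the $\binom{2d}{d}$ splittings agreeing is true on the diagonal but false off it. Concretely: since your steps (2)--(3) never use $\alpha_k\ge0$, they would, if valid, give $\max_{\bx^{\HH}\bx=1}|\F(\ov\bx,\dots,\ov\bx,\bx,\dots,\bx)|=v(R')$ for \emph{every} conjugate partial-symmetric $\F$, contradicting Example~\ref{ex:quarticfail}, where the left side is $\tfrac12$ and the right side is $1$. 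The hypothesis $\alpha_k\ge0$ must do more than certify nonnegativity of the diagonal form.

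The paper resolves exactly the ``simultaneity across $k$'' obstacle you flagged in your first sketch, but by reordering the two reductions rather than abandoning the decomposition. It inserts a sandwich model $(M'):\ \max_{\by^{\HH}\by=\bz^{\HH}\bz=1}\re\F(\by,\dots,\by,\bz,\dots,\bz)$ and argues in two stages. First $v(R')\le v(M')$: fix an optimizer of $(R')$, note that $\F(\bullet,\dots,\bullet,\bx^{d+1}_*,\dots,\bx^{2d}_*)\in\C^{n^d}$ is a \emph{symmetric} complex $d$-tensor by partial-symmetricity, and apply Proposition~\ref{thm:complexBanach} once to $\F$ itself (not to each $\HI_k$) to collapse the first $d$ slots to a single $\by^*$; then repeat on the last $d$ slots. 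Second $v(M')\le v(L')$: with only two vectors $\by^*,\bz^*$ remaining, expand via $\F=\sum_k\alpha_k\ov{\HI_k}\otimes\HI_k$ and use $\alpha_k\ge0$ together with $\re(ab)\le|a||b|\le\tfrac12(|a|^2+|b|^2)$ to bound $v(M')$ by the average of $\F(\by^*,\dots,\by^*,\ov{\by^*},\dots,\ov{\by^*})$ and $\F(\bz^*,\dots,\bz^*,\ov{\bz^*},\dots,\ov{\bz^*})$, each of which is a value of the objective of $(L')$ at a unit vector. The decisive device is applying Proposition~\ref{thm:complexBanach} at the level of the whole form $\F$, which sidesteps the per-$k$ simultaneity issue, and reserving the $\alpha_k\ge0$ decomposition for the final $2\to1$ step where the AM--GM bound is sharp enough.
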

\begin{proof}
Let us first introduce a sandwiched optimization model:
$$
(M')\quad \max_{\by^{\HH}\by=\bz^{\HH}\bz=1} \re\F(\underbrace{\by,\dots,\by}_d, \underbrace{\bz,\dots,\bz}_d).
$$
Denote $v(L')$, $v(M')$ and $v(R')$ to be the optimal values of $(L')$, $(M')$ and $(R')$, respectively. Clearly, $(R')$ is a relaxation of $(M')$, and $(M')$ is a relaxation of $(L')$, implying that $v(L')\le v(M') \le v(R')$.

Next, let $(\bx^1_*,\dots,\bx^{2d}_*)$ be an optimal solution of $(R')$. Consider the following problem:
$$
\max_{\by^{\HH}\by=1} \re\F(\underbrace{\by,\dots,\by}_d, \bx^{d+1}_*,\dots,\bx^{2d}_*),
$$
whose optimal solution is denoted by $\by^*$. Noticing that $\F(\bullet,\dots,\bullet, \bx^{d+1}_*,\dots,\bx^{2d}_*)\in\C^d$ is symmetric, by Proposition~\ref{thm:complexBanach}, we have
\begin{align*}
\re\F(\underbrace{\by^*,\dots,\by^*}_d, \bx^{d+1}_*,\dots,\bx^{2d}_*) &=\max_{\by^{\HH}\by=1} \re\F(\underbrace{\by,\dots,\by}_d, \bx^{d+1}_*,\dots,\bx^{2d}_*) \\
&= \max_{(\bx^k)^{\HH}\bx^k=1,\,k=1,\dots,d} \re \F(\bx^1,\dots,\bx^d, \bx^{d+1}_*,\dots,\bx^{2d}_*)\\
&\ge \re \F(\bx^1_*,\dots,\bx^{2d}_*)=v(R').
\end{align*}
For the same reason, we have
\begin{align*}
\max_{\bz^{\HH}\bz=1} \re\F(\underbrace{\by^*,\dots,\by^*}_d, \underbrace{\bz,\dots,\bz}_d) & = \max_{(\bx^k)^{\HH}\bx^k=1,\,k=d+1,\dots,2d} \re\F(\underbrace{\by^*,\dots,\by^*}_d, \bx^{d+1},\dots,\bx^{2d}) \\
&\ge \re\F(\underbrace{\by^*,\dots,\by^*}_d, \bx^{d+1}_*,\dots,\bx^{2d}_*) \ge v(R'),
\end{align*}
implying that $v(M')\ge v(R')$.

Finally, let $(\by^*,\bz^*)$ be an optimal solution of $(M')$. Since $\alpha_k\ge0$ for all $0\le k\le m$, we have
\begin{align*}
  &~~~~\re\F(\underbrace{\by^*,\dots,\by^*}_d, \underbrace{\bz^*,\dots,\bz^*}_d)\\
  &= \re \left(\sum_{k=1}^m\alpha_k \ov{\HI_k}\otimes \HI_k\right) (\underbrace{\by^*,\dots,\by^*}_d, \underbrace{\bz^*,\dots,\bz^*}_d)\\
  &= \sum_{k=1}^m \alpha_k \re (\ov{\HI_k}(\underbrace{\by^*,\dots,\by^*}_d)\cdot \HI_k(\underbrace{\bz^*,\dots,\bz^*}_d)) \\
  &\le  \sum_{k=1}^m \alpha_k |\ov{\HI_k}(\underbrace{\by^*,\dots,\by^*}_d) |\cdot| \HI_k(\underbrace{\bz^*,\dots,\bz^*}_d)| \\
  &\le  \sum_{k=1}^m \frac{\alpha_k}{2}\bigg(|\ov{\HI_k}(\underbrace{\by^*,\dots,\by^*}_d)|^2 + |\HI_k(\underbrace{\bz^*,\dots,\bz^*}_d)|^2\bigg) \\
  &=  \frac{1}{2}\sum_{k=1}^m \alpha_k\bigg((\ov{\HI_k}\otimes \HI_k)(\underbrace{\by^*,\dots,\by^*}_d,\underbrace{\ov{\by^*},\dots,\ov{\by^*}}_d) + (\ov{\HI_k}\otimes \HI_k)(\underbrace{\bz^*,\dots,\bz^*}_d, \underbrace{\ov{\bz^*},\dots,\ov{\bz^*}}_d)\bigg) \\
  &= \frac{1}{2}\bigg(\F(\underbrace{\by^*,\dots,\by^*}_d, \underbrace{\ov{\by^*},\dots,\ov{\by^*}}_d) + \F( \underbrace{\bz^*,\dots,\bz^*}_d, \underbrace{\ov{\bz^*},\dots,\ov{\bz^*}}_d)\bigg) \\
  &\le \max\bigg\{ \F(\underbrace{\by^*,\dots,\by^*}_d, \underbrace{\ov{\by^*},\dots,\ov{\by^*}}_d), \F(\underbrace{\bz^*,\dots,\bz^*}_d, \underbrace{\ov{\bz^*},\dots,\ov{\bz^*}}_d)\bigg\}.
\end{align*}
{Remark that the positivity of $\alpha_k$'s is exploited when invoking the triangle inequality in the first inequality above.} This implies that either $\F(\underbrace{\by^*,\dots,\by^*}_d, \underbrace{\ov{\by^*},\dots,\ov{\by^*}}_d)$ or $\F(\underbrace{\bz^*,\dots,\bz^*}_d, \underbrace{\ov{\bz^*},\dots,\ov{\bz^*}}_d)$ attains $v(M')$, proving that $v(L')\ge v(M')$. Therefore we have $v(L')= v(M') = v(R')$.
\end{proof}

We remark that the condition for $\alpha_k$'s being nonnegative in $\F$ in Theorem~\ref{thm:Banach2} is actually the condition for the real-valued symmetric conjugate form $\mathbf{S}(\F)$ being a sum of squares (SOS) of complex polynomials; see the relation between {Propositions~\ref{thm:polyrealvalued} and~\ref{thm:conju-decomp}}. In the field of polynomial optimization, checking whether a polynomial is SOS can be done by the feasibility of a semidefinite program. In fact, there is an easy sufficient condition for the condition {on} $\F$ in Theorem~\ref{thm:Banach2} to hold: the square matrix flattening of $\F$ is Hermitian positive semidefinite.
Interested readers are referred to~\cite{JLZ15} for details.

\section{Applications} \label{sec:application}

The theoretical results developed {in the previous sections are also useful in practice. In this section, we shall discuss some applications that can be formulated as real-valued complex polynomial optimization models. In particular, these problems can be cast as finding the largest C-eigenvalue of a conjugate partial-symmetric tensor or the largest G-eigenvalue of a conjugate super-symmetric tensor.}

One challenge of these eigenvalue optimization problems is that the variables are coupled in the { complex polynomial objective function}. However, the extended Banach's theorem in Section~\ref{sec:Banach}, specifically Theorems~\ref{thm:equal-con} and~\ref{thm:Banach2}, guarantee that we can separate the variables without losing the optimality. This enables us to focus on the multilinear (block) optimization model
$$
\max_{(\bx^k)^{\HH}\bx^k=1,\,k=1,\dots,d}\re  \G\bigg( \binom{\,\ov{\bx^1}\,}{\bx^1}, \dots, \binom{\,\ov{\bx^d}\,}{\bx^d} \bigg)
$$
for a conjugate super-symmetric tensor $\G$, or
$$
\max_{(\bx^k)^{\HH}\bx^k=1,\,k=1,\dots,2d} \re\F(\bx^1,\dots,\bx^{2d})
$$
for certain conjugate partial-symmetric tensor $\F$. One great advantage of the above models is that the optimization over one block variable is easy when other blocks are fixed. Therefore, some efficient solution methods tailored for these models can be applied, such as the block coordinate decent method~\cite{LT92} and the maximum block improvement method~\cite{CHLZ12}. {Conversely, the extended Banach's theorem in Section~\ref{sec:Banach} provides an alternative way to solve the symmetric multilinear optimization model by resorting to some approaches tailored for symmetric tensor problems such as the power method~\cite{KoldaMayo11} and the semidefinite programming method~\cite{NieWang14,JMZ15}. In particular, as the search space can be restricted to symmetric solutions, the latter equivalent model significantly reduces the number of decision variables, which is beneficial to many practical algorithms such as semidefinite programs.}

\subsection{Ambiguity function shaping for radar waveform}

The ambiguity function of the waveform is often used to probe the environment in radar system. By controlling both the Doppler and the range resolutions of the system, it can regulate the interference power produced by unwanted returns~\cite{ADJZ12}. To be specific, suppose $v_0$ is the normalized target Doppler frequency and $\bs = (s_1,\dots,s_n)^{\T} \in \C^n$ is the radar code to be optimized. There are $n_0$ interfering scatterers and the matrix $J^r\in\R^{n^2}$ for $r\in\{0,1,\dots,n-1\}$ is defined as
\[
{(J^r)}_{ij}=\left\{
\begin{array}{cc}
1 &  i-j=r \\
0 &  i-j\ne r
\end{array}\right., \quad\forall\, 1\le i, j\le n.
\]
The ambiguity function of $\bs$ for the time-lag $r\in\{0,1,\dots,n-1\}$ and the normalized Doppler frequency $v\in \left[-\frac{1}{2}, \frac{1}{2}\right]$ is given by
$$
g_{\bs}(r,v)=\frac{1}{\|\bs\|^2}\left|\bs^{\HH}J^r(\bs\odot \bp(v))\right|^2,
$$
where $\bp(v) = (1, e^{\ii2\pi v} ,\dots , e^{\ii2(n-1)\pi v})^{\T}$ and $\odot$ denotes the Hadamard product; interested readers are referred to~\cite{ADJZ12} for more details of the ambiguity function and radar waveform design.

{Denote by $r_k$ the time-lag of the $k$-th scatterer, and let $v_k$ be} the normalized Doppler frequency of the $k$-th scatterer. The latter is usually modeled as a random variable uniformly distributed around {a mean frequency $\hat{v}_k$ with some tolerance $\frac{\epsilon_k}{2}$}, i.e., $v_k$ is a uniform distribution in $\left[\hat{v}_k - \frac{\epsilon_k}{2}, \hat{v}_k + \frac{\epsilon_k}{2}\right]$. Consequently, the disturbance power at the output of the matched filter is given by
\begin{equation}\label{continous-distur-power}
\sum_{k=1}^{n_0}{\sigma_k}^2\|\bs\|^2 \ex [g_{\bs} (r_k, v_{k} -v_{0} )] + \sigma^2\|\bs\|^2,
\end{equation}
where $\sigma^2$ is the variance of the circular white noise, and ${\sigma_k}^2$ is the echo mean power produced by the $k$-th scatterer. To simplify the notation, all the following normalized Doppler frequencies are expressed in terms of the difference with respect to $v_0$. We discretize the normalized Doppler interval $[-\frac{1}{2}, \frac{1}{2})$ into $m$ bins, denoted by discrete frequencies $x_j = -\frac{1}{2} + \frac{j}{m}$ for $j\in\{0,1,\dots,m\}$. Let
$$
\Delta_k = \left\{ j:\left[x_j - \frac{1}{2m},x_j + \frac{1}{2m}\right) \bigcap \left[\hat{v}_k - \frac{\epsilon_k}{2}, \hat{v}_k + \frac{\epsilon_k}{2}\right] \neq \varnothing \right\}.
$$
Then the above statistical expectations can be approximated by the sample means over $\Delta_k$, i.e.,
$$
\ex [g_{\bs} (r_k, v_{k} )] \approx \frac{1}{|\Delta_k|}\sum_{j \in \Delta_k} g_{\bs} (r_k, x_{j} ),
$$
Plugging the above expression into~\eqref{continous-distur-power}, the total disturbance power at the output of the matched filter can be rewritten as
$$
\phi(\bs)=\sum_{r=0}^{n-1} \sum_{j=1}^{m} \rho(r,k) |\bs^{\HH}J^r (\bs\odot\bp(x_j))|^2,
$$
where $\rho(r,k) = \sum_{k=1}^{n_0}{\delta_{r,r_k}}{\bf{1}}_{\Delta_k}(j)\frac{\sigma_k^2}{|\Delta_k|}$ with {$\delta_{r,r_k}$ being the Kronecker delta and} ${\bf{1}}_{\Delta_k}(j)$ being an indicator function.

To obtain phase-only modulated waveforms, an optimization model to minimize $\phi(\bs)$ subject to constant modulus constraints was proposed in~\cite{ADJZ12}:
$\min_{|s_i| = 1,\, i=1,\dots,n}\phi(\bs)$.
{Another} modeling strategy is to account for the finite energy transmitted by the radar and assume that $\|\bs\|^2 = 1$. However, this single constraint does not provide any kind of control on the shape of the resulting coded waveform. To circumvent this drawback, one practical approach 
is to enforce a similarity constraint (see~\cite{ADFW13} for more details):
\begin{equation}\label{similarity-constraint}
\| \bs - \bs^0 \|^2 \le \gamma,
\end{equation}
where $\bs^0$ is a known code which shares some nice properties {such as a constant modula and a reasonable range resolution}. Moreover, since any feasible $\bs$ satisfies $\|\bs\|=1$ and
$$
\| \bs - \bs^0 \|^2 = \| \bs \|^2 + \| \bs^0 \|^2 -  (\bs^{\HH}\bs^0 + (\bs^0)^{\HH}\bs) = 1 + \| \bs^0 \|^2 -  (\bs^{\HH}\bs^0 + (\bs^0)^{\HH}\bs).
$$
Therefore, $\| \bs - \bs^0 \|^2 \le \gamma$ is equivalent to $- (\bs^{\HH}\bs^0 + (\bs^0)^{\HH}\bs) \le \gamma - 1 - \| \bs^0 \|^2$. Typically, the similarity constraint~\eqref{similarity-constraint} is not a hard constraint, it aims to restrict the searching area within some neighborhood of $\bs^0$ and the size of the neighborhood is controlled by $\gamma$. Motivated by the aforementioned equivalence, {a similar result} can be achieved by penalizing the quantity $- (\bs^{\HH}\bs^0 + (\bs^0)^{\HH}\bs)$ in the objective and we arrive the following formulation
\begin{equation}\label{eq:radar}
  \min_{\|\bs\| = 1} \left(\phi(\bs) - \rho (\bs^{\HH}\bs^0 + (\bs^0)^{\HH}\bs)^2 \| \bs\|^2\right)
\end{equation}
with penalty parameter $\rho$. Notice that the objective function in~\eqref{eq:radar} is a real-valued quartic conjugate complex form. If $\bs^*$ is the optimal solution and so is $-\bs^*$, then we can choose one of them to make sure that ${(\bs^*)^{\HH}\bs^0 + (\bs^0)^{\HH}\bs^*}>0$. The model~\eqref{eq:radar} is obviously finding the smallest C-eigenvalue of a conjugate partial-symmetric tensor, which can also be viewed as 
finding the smallest G-eigenvalue of a conjugate super-symmetric tensor as mentioned in Theorem~\ref{thm:c-g-eigen}.

\subsection{The best rank-one approximation of a complex tensor}

Many modern engineering problems can be cast as multilinear least squares regression given as
\begin{equation}\label{eq:tensor1}
\min_{\bz^k\in\C^{n_k},\,k=1,\dots,d}\frac{1}{2}\|\bz^1\otimes\dots \otimes \bz^d - \F \|^2,
\end{equation}
where $\F \in \C^{n_1\times \dots \times n_d }$ is a given nonzero complex tensor. For instance, in quantum entanglement the geometric measure of a given $d$-partite pure state $\F$ is defined by~\eqref{eq:tensor1}; see~\cite{WG03,NQB14} for details.

In fact,~\eqref{eq:tensor1} can be also categorized as a G-eigenvalue problem for a conjugate super-symmetric tensor. To see this, first it is easy to see that~\eqref{eq:tensor1} is equivalent to
$$
\min_{\lambda\in\R,\,\|\bz^k\|=1,\,k=1,\dots,d}\|\lambda \bz^1\otimes\dots \otimes \bz^d  - \F \|^2.
$$
{When all $\bz^k$'s with $\|\bz^k\| =1$ for $k = 1,\dots, d$ are fixed, the optimal $\lambda$ satisfies}
\begin{align*}
    \min_{\lambda \in \R}\|\lambda \bz^1\otimes\dots \otimes \bz^d  - \F \|^2 &=\min_{\lambda \in \R}\left( \|\F\|^2 - 2 \lambda \re \F(\bz^1,\dots,\bz^d) + \lambda^2 \right) \\
    &=\|\F\|^2 - (\re\F(\bz^1,\dots, \bz^d))^2.
\end{align*}
Therefore, by multilinearity,~\eqref{eq:tensor1} is equivalent to
\begin{equation}\label{eq:tensor2}
\max_{\|\bz^k\|=1,\,k=1,\dots,d}|\re\F(\bz^1,\dots, \bz^d)|=\max_{\|\bz^k\|=1,\,k=1,\dots,d}\re\F(\bz^1,\dots, \bz^d).
\end{equation}

Let us now consider a relaxation of the above model
\begin{equation}\label{eq:tensor3}
\max_{\sum_{k=1}^d\|\bz^k\|^2 = d }\re \F(\bz^1, \dots , \bz^d).
\end{equation}
A key observation is that this relaxation is actually tight. To see this, suppose $(\bz_*^1,\dots, \bz_*^{d})$ is an optimal solution of~\eqref{eq:tensor3}. Trivially we have $\re\F(\bz_*^1,\bz_*^2,\dots,\bz_*^{d})>0$ as $\F$ is nonzero and so $\|\bz_*^k\|\ne 0$ for $k=1,\dots,d$.
By noticing
$$
\left(\prod_{k=1}^d\|\bz_*^k\|^2\right)^{1/d} \le \frac{1}{d}\sum_{k=1}^{d}\|\bz_*^k\|^2=1,
$$
we have that $\prod_{k=1}^d\|\bz_*^k\|\le1$ and so
$$
\re\F\left(\frac{\bz_*^1}{\|\bz_*^1 \|},\dots,\frac{\bz_*^{d}}{\|\bz_*^{d} \|}\right)=\re\frac{\F(\bz_*^1,\dots,\bz_*^{d})}{\prod_{k=1}^{d}\|\bz_*^k\|} \ge \re \F(\bz_*^1,\bz_*^2,\dots,\bz_*^{d}).
$$
Therefore, the feasible solution $\left(\bz_*^1/\|\bz_*^1 \|,\dots,\bz_*^d/\|\bz_*^d\|\right)$ of~\eqref{eq:tensor2} is already optimal to the relaxation model~\eqref{eq:tensor3}, proving the equivalence between~\eqref{eq:tensor2} and~\eqref{eq:tensor3}.

Finally, to formulate~\eqref{eq:tensor3} as a G-eigenvalue {optimization} problem, let us denote $\bz=\left((\bz^1)^{\T},\dots,(\bz^{d})^{\T}\right)^{\T}\in\C^{nd}$ and construct a symmetric complex tensor $\HI\in\C^{(nd)^d}$ such that
$$
\HI(\underbrace{\bz,\dots,\bz}_{d}) = \F(\bz^1,\dots,\bz^d).
$$
Thus,~\eqref{eq:tensor3} can be rewritten as
\begin{align*}
\max_{\|\bz\|=\sqrt{d}} \re \HI(\underbrace{\bz,\dots,\bz}_{d})
&=\max_{\|\bz\|=\sqrt{d}} \frac{1}{2} \left(\HI(\underbrace{\bz,\dots,\bz}_{d}) + \ov\HI(\underbrace{\ov\bz,\dots,\ov\bz}_{d})\right)\\
&=\max_{\|\bx\|=1} \frac{\sqrt{d^d}}{2} \left(\HI(\underbrace{\bx,\dots,\bx}_{d}) + \ov\HI(\underbrace{\ov\bx,\dots,\ov\bx}_{d})\right)\\
&=\max_{\|\bx\|=1} \G\bigg(\underbrace{\dbinom{\ov\bx}{\bx}, \dots,
\dbinom{\ov\bx}{\bx}}_d \bigg),
\end{align*}
where $\G\in\C^{(2nd)^d}$ is a conjugate super-symmetric tensor. The multilinear least square model~\eqref{eq:tensor1} is shown to be a special case of the G-eigenvalue optimization problem.

\section{Conclusion}\label{sec:conclusion}

This paper focuses on complex polynomial functions that incorporate conjugate variables. We {introduced} two types of conjugate complex forms and their symmetric tensor representations. Necessary and sufficient conditions for these conjugate complex forms being real-valued are presented, based on which two types of symmetric complex tensors are introduced. We present new definitions of eigenvalues/eigenvectors, namely the $C$-eigenvalue and the $G$-eigenvalue, which generalize the existing concepts of eigenvalues in the literature. Extensions of Banach~\cite{B38} type's theorem on these complex tensors are discussed as well. To give the readers a holistic picture 
Table~\ref{table} summarizes the main contents.

\begin{table}[h]
\begin{center}
\begin{tabular}{ | l  p{60mm} l |}
\hline
Sec.  &  Subject &  Results \\ \hline
\ref{sec:cform}    & Symmetric conjugate form {and} partial-symmetric tensor  &  Def.~\ref{def:sform}, Def.~\ref{def:partial-symmetric}, Lemma~\ref{thm:tensorS}  \\
\ref{sec:ctensor}  & Real-valued symmetric conjugate form {and} conjugate partial-symmetric tensor  & Cor.~\ref{thm:condition}, Def.~\ref{thm:cps}, Prop.~\ref{thm:conju-mapping} \\
\ref{sec:eigendef} & $C$-eigenvalue and $C$-eigenvector & Def.~\ref{thm:Ceigen}, Prop.~\ref{thm:Ceigen2}, Prop.~\ref{prop:C-eigen} \\
\ref{sec:cbanach}  & Banach type theorem &  Prop.~\ref{degree2}, Theorem~\ref{thm:Banach2} \\ \hline
\ref{sec:gform}    & General conjugate form {and} symmetric tensor  &  Def.~\ref{def:gform}, Lemma~\ref{thm:tensorG}  \\
\ref{sec:gtensor}  & Real-valued general conjugate form {and} conjugate super-symmetric tensor  & Cor.~\ref{thm:condition}, Def.~\ref{thm:css}, Prop.~\ref{thm:gform-rv} \\
\ref{sec:eigendef} & $G$-eigenvalue and $G$-eigenvector & Def.~\ref{thm:Geigen}, Prop.~\ref{prop:G-eigen}\\
\ref{sec:gbanach}  & Banach type theorem  & Theorem~\ref{thm:equal-con} \\ \hline
\end{tabular}
\end{center}
\caption{Summary of the symmetric conjugate form and the general conjugate form}
\label{table}
\end{table}

An important aspect of polynomials is the theory of nonnegativity. Most existing results only apply for polynomials in real variables, for the reason that such polynomials are real-valued. Since we have the full characterization of real-valued conjugate complex polynomials introduced in this paper, the question about their nonnegativity naturally arises, in particular, the relationship between nonnegativity and SOS. In the real domain, this problem was completely solved by Hilbert~\cite{H88} in 1888. However, relationship between nonnegative complex polynomials and SOS has not been established explicitly in the literature as far as we know. This would be one of the future research using the notion of conjugate polynomials. Moreover, the new notions of symmetric complex tensors and the eigenvalues/eigenvectors would hopefully attract future modelling opportunities, and the newly developed properties, in particular the extension of Banach's result would be helpful in solution methods for complex polynomial optimization.

\section*{Acknowledgements}

The authors would like to thank the anonymous referees and the associated editor for their insightful comments, which helped to significantly improve this paper from its original version.
We are also grateful to Augusto Aubry for discussions on radar waveform design with similarity constraint.

\end{document}